\theoremstyle{definition}
\newtheorem{thm}{Theorem}[section]
\newtheorem{lem}[thm]{Lemma}
\newtheorem{cor}[thm]{Corollary}
\newtheorem{prop}[thm]{Proposition}
\theoremstyle{definition}
\newtheorem{rem}[thm]{Remark}
\newtheorem{claim}[thm]{Claim}
\newtheorem{defn}[thm]{Definition}
\newtheorem{ex}[thm]{Example}
\numberwithin{equation}{section}
\def\C{{\mathbb C}}
\def\O{{\mathscr O}}
\def\P{{\mathbb P}}
\def\Q{{\mathbb Q}}
\def\Z{{\mathbb Z}}
\def\Coker{\mathop{\mathrm{Coker}}\nolimits}
\def\Ext{\mathop{\mathrm{Ext}}\nolimits}
\def\Fitt{\mathop{\mathrm{Fitt}}\nolimits}
\def\Frac{\mathop{\mathrm{Frac}}\nolimits}
\def\Hom{\mathop{\mathrm{Hom}}\nolimits}
\def\Ker{\mathop{\mathrm{Ker}}\nolimits}
\def\Spec{\mathop{\rm Spec}}
\def\chara{\mathop{\mathrm{char}}}
\def\idealm{{\mathfrak m}}
\def\idealp{{\mathfrak p}}
\def\jac{\text{\rm jac}}
\def\alg{\text{\rm alg}}
\def\sep{\text{\rm sep}}
\def\length{\mathop{\mathrm{length}}\nolimits}
\def\pr{\text{\rm pr}}
\begin{document}

\title[Deformations of rational curves]{Deformations of rational curves in positive characteristic}

\author{Kazuhiro Ito}
\address{Department of Mathematics, Faculty of Science, Kyoto University, Kyoto 606-8502, Japan}
\email{kito@math.kyoto-u.ac.jp}

\author{Tetsushi Ito}
\address{Department of Mathematics, Faculty of Science, Kyoto University, Kyoto 606-8502, Japan}
\email{tetsushi@math.kyoto-u.ac.jp}

\author{Christian Liedtke}
\address{TU M\"unchen, Zentrum Mathematik - M11, Boltzmannstr.\ 3, 85748
Garching bei M\"unchen, Germany}
\email{liedtke@ma.tum.de}

\date{April 15, 2020}
\subjclass[2010]{Primary 14M20; Secondary 14G17, 14B07, 14J26}
\keywords{rational curves, uniruled varieties, positive characteristic, singular curves, $\delta$-invariants, Jacobian numbers}

\maketitle

\begin{abstract}
We study deformations of rational curves and their singularities
in positive characteristic.
We use this to prove that if a smooth and proper surface
in positive characteristic $p$ is dominated by a family of rational curves
such that one member has all $\delta$-invariants (resp.\ Jacobian numbers) 
strictly less than $(p-1)/2$ (resp.\ $p$),
then the surface has negative Kodaira dimension.
We also prove similar, but weaker results hold for higher dimensional varieties.
Moreover, we show by example that our result is in some sense optimal.
On our way, we obtain a sufficient criterion in terms of Jacobian numbers for the normalization of a curve over an imperfect field to be smooth.
\end{abstract}


\section{Introduction}

A \textit{rational curve} is a proper integral scheme
over an algebraically closed field whose normalization
is isomorphic to the projective line $\P^1$.
Rational curves are central to higher dimensional algebraic geometry, as already 
indicated by the title of Koll\'ar's fundamental book \cite{Kollar:RationalCurveBook}.
Let us shortly recall the situation in dimension two.
\begin{enumerate}
\item In characteristic $0$,
it is well-known that
rational curves on surfaces of non-negative Kodaira
dimension are \textit{topologically rigid}, i.e., do not deform in positive-dimensional families.

\item In characteristic $p>0$, then the situation is different:
Zariski gave examples of unirational surfaces of non-negative
Kodaira dimension \cite{Zariski}.
Therefore, rational curves on surfaces of non-negative Kodaira dimension may
\textit{not} be topologically rigid.
However, in this case, the general member of such a positive-dimensional family 
of rational curves is {\em not} smooth.
(In some cases, the singularities of the general member were
studied by Shimada \cite{ShimadaFamily}.)
\end{enumerate}
This poses the interesting question what can be said about topological (non-)rigidity of
rational curves on varieties of non-negative Kodaira dimension in positive characteristics.

We recall some classical invariants of singularities.
Let $C$ be an integral curve over
an algebraically closed field $k$ of characteristic $p>0$.
For each closed point $x\in C$,
the $\delta$-invariant and the Jacobian number of $C$ at $x$
are defined as follows:
\begin{enumerate}
\item The \textit{$\delta$-invariant} is defined by
\[ \delta(C, x) := \dim_{k}(\pi_\ast\O_{\widetilde{C}}/\O_{C})_x, \]
where $\pi \colon \widetilde{C} \to C$ is the normalization morphism.
\item The \textit{Jacobian number} is defined by
\[ \jac(C,x) := \dim_k \left(\O_C/\Fitt^1_{\O_C}(\Omega^1_{C/k})\right)_x, \]
where $\Omega^1_{C/k}$ is the sheaf of K\"ahler differentials on $C$
and $\Fitt^1_{\O_C}(\Omega^1_{C/k}) \subset \O_C$
is the first Fitting ideal of $\Omega^1_{C/k}$.
There are several definitions of Jacobian numbers in the literature.
(See Section \ref{Section:JacobianNumber} for details.)
\end{enumerate}

Next, we recall the notion of families of rational curves
and uniruledness/unirationality of varieties
following \cite{Kollar:RationalCurveBook}.
Let $X$ be a smooth, proper, and connected variety over $k$.
\begin{enumerate}
\item A \textit{family of rational curves on $X$} means
a closed subvariety $\mathscr{C} \subset U \times X$ with projections
$\pi \colon \mathscr{C} \to U$ and $\varphi \colon \mathscr{C} \to X$
such that
$U$ is an integral variety over $k$,
$\pi$ is proper flat, and
every geometric fiber of $\pi$ is an integral rational curve.
We say a rational curve $C \subset X$ is \textit{topologically non-rigid} if
there exists a family of rational curves $(\pi,\varphi)$ on $X$ with
$\dim (\varphi(\mathscr{C})) \geq 2$
such that $\varphi(\mathscr{C}_u) = C$ for some closed point $u \in U$.
Otherwise, we say $C$ is \textit{topologically rigid}.

\item (\cite[Definition IV.1.1.1]{Kollar:RationalCurveBook})
\ We say $X$ is \textit{unirational} if there exists
a dominant rational map
$\psi \colon \P^{\dim(X)} \dashrightarrow X$.
If there exists such a rational map $\psi$ inducing a separable extension of function fields,
we say $X$ is \textit{separably unirational}.
We say $X$ is \textit{uniruled} if there exist an integral variety $Y$ with
$\dim(Y) = \dim(X) - 1$ and a dominant rational map
$\psi \colon \P^1 \times Y \dashrightarrow X$.
If there exists a such a rational map $\psi$ inducing a separable 
extension of function fields,
we say $X$ is \textit{separably uniruled}.
\end{enumerate}


Here is the statement of our main theorem for surfaces.

\begin{thm}
\label{MainTheorem1}
Let $X$ be a smooth, proper, and connected surface over an algebraically closed field $k$
of characteristic $p > 0$.
Assume that $X$ contains a topologically non-rigid rational curve $C \subset X$ satisfying \textit{at least one} of the following conditions:
\begin{enumerate}
\item The $\delta$-invariants of $C$ are strictly less than $(p-1)/2$ at every closed point.
\item The Jacobian numbers of $C$ are strictly less than $p$ at every closed point.
\end{enumerate}
Then, $X$ is separably uniruled and thus, has negative Kodaira dimension.
\end{thm}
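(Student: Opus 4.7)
The plan is to reduce the problem to producing a smooth rational curve over the function field of the parameter base, and then to apply the paper's smoothness criterion for normalizations of curves over imperfect fields. Since $\dim X = 2$ and $\dim \overline{\varphi(\mathscr{C})} \geq 2$ by the non-rigidity of $C$, the morphism $\varphi$ is automatically dominant, so $X$ is at least uniruled. The real work is to upgrade this to \emph{separable} uniruledness, which by Proposition \ref{Proposition:SeparablyUniruledPlurigenera} implies negative Kodaira dimension. After restricting $U$ to a sufficiently general curve whose total space still dominates $X$, I may assume $\dim U = 1$, so $\varphi \colon \mathscr{C} \to X$ is a dominant, generically finite morphism of surfaces. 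Write $K := k(U)$, an imperfect field of transcendence degree one over $k$, and consider the generic fiber $\mathscr{C}_\eta \subset X_K$, a proper integral curve over $K$ whose geometric generic fiber is a rational curve.

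Next, I would transfer the singularity hypothesis from the chosen member $C$ to the geometric generic fiber $\mathscr{C}_{\bar \eta}$. By upper-semicontinuity of $\delta$-invariants and Jacobian numbers in a flat family of geometrically integral rational curves --- where the arithmetic genus and the (vanishing) geometric genus are both constant along the family --- the hypothesis $\delta(C,x) < (p-1)/2$, respectively $\jac(C,x) < p$, at every closed point of $C$ forces the same bound on $\mathscr{C}_{\bar\eta}$ after replacing $C$ by a sufficiently general closed fiber if necessary. The technical heart of the argument is then the smoothness criterion announced in the abstract: under these strict inequalities on the geometric fiber, the normalization $\widetilde{\mathscr{C}_\eta}$ over $K$ is smooth over $K$. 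Heuristically, the ``inseparable defect'' contribution to $\delta$, respectively $\jac$, at a non-smooth closed point of a regular curve over imperfect $K$ is a multiple of $(p-1)/2$, respectively $p$, by a Tate-type divisibility, so the strict bounds force this contribution to vanish.

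Granting smoothness of $\widetilde{\mathscr{C}_\eta}$ over $K$, the normalized total family $\widetilde{\mathscr{C}} \to U$ is smooth along its generic fiber, so a general closed fiber is $\P^1_k$. To finish I verify that the composite $f \colon \widetilde{\mathscr{C}} \to \mathscr{C} \to X$ is separable: at a general closed point $p$, its differential has an injective vertical component (since $f$ embeds the smooth curve $\widetilde{\mathscr{C}}_{\pi(p)}$ into the smooth surface $X$ at the smooth point $f(p)$), and any horizontal direction is sent to a vector transverse to the vertical image, because $\varphi$ is dominant and hence the Kodaira--Spencer deformation of the family is generically non-zero. Thus $df$ is injective at a general point, so $f$ is separable, and so is $\varphi$ by birationality of $\mathscr{C}$ and $\widetilde{\mathscr{C}}$. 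The main obstacle I expect is precisely the smoothness criterion over imperfect fields: identifying the correct Tate-type divisibility statement for Jacobian numbers and $\delta$-invariants, and showing that these invariants quantitatively control the smoothness of the normalization. The semicontinuity step is expected to be standard but must be aligned with the paper's precise definitions, and the differential-theoretic verification in the final paragraph needs mild care to rule out degenerate loci.
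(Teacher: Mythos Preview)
Your reduction to a one-dimensional base $U$ and your application of the smoothness criteria (Theorems \ref{MainTheorem3} and \ref{Theorem:TateGenusChange}) to the generic fiber $\mathscr{C}_\eta$ over $K=k(U)$ are correct and in line with the paper. The genuine gap is in your final paragraph, where you argue that $f\colon\widetilde{\mathscr{C}}\to X$ is separable. Your claim that ``$\varphi$ is dominant and hence the Kodaira--Spencer deformation of the family is generically non-zero'' is characteristic-zero reasoning and is simply false in characteristic $p$. Concretely, take $X=\P^1\times\P^1$ with coordinates $(s,t)$, $U=\A^1$ with coordinate $u$, and $\mathscr{C}=\{s=u^p\}\subset U\times X$. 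The fibers are smooth rational lines, $\varphi$ is dominant, yet $d\varphi(\partial/\partial u)=p u^{p-1}\partial/\partial s=0$, so $df$ has rank $1$ everywhere and $\varphi$ is purely inseparable. Of course $X$ is still separably uniruled here, but not via \emph{this} family; your argument does not distinguish this situation from a genuine failure. More generally, any family that factors through a Frobenius reparametrization of the base has zero Kodaira--Spencer map, and nothing in your setup rules this out.

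This is exactly the obstacle the paper identifies in the sketch (Section \ref{Subsection:SketchProof}): ``the extension $k(\mathscr{C})/k(X)$ induced by $\varphi$ may be inseparable.'' The paper's fix is \emph{not} to show that $\varphi$ is separable, but to replace $\mathscr{C}$ by an intermediate variety $\mathscr{C}'$ with $k(\mathscr{C}')$ equal to the separable closure of $k(X)$ in $k(\mathscr{C})$, so that $\mathscr{C}'\to X$ is separable by construction (Lemma \ref{KeyLemma}). One then needs $\mathscr{C}'$ to fiber over some base $U'$ in reduced rational curves; this is where the condition that $k(\mathscr{C})\cap k(X)^{\sep}$ be separable over $k(U)\cap k(X)^{\sep}$ enters, and it is automatic when $\dim U=1$ by Mac Lane's theorem. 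The smoothness criteria are then applied to the generic fiber of $\mathscr{C}'\to U'$ rather than of $\mathscr{C}\to U$. So the missing idea in your proposal is the passage to the separable-closure model $\mathscr{C}'$; without it, the separability of the map to $X$ cannot be salvaged by a differential argument.
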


\begin{rem}
The converse to Theorem \ref{MainTheorem1} is well-known.
It follows from the classification of surfaces.
In fact, if $X$ is a smooth, proper, and connected surface of negative Kodaira dimension,
then it is birationally equivalent to a ruled surface.
Hence it contains topologically non-rigid {\it smooth} rational curves.
Both the $\delta$-invariant and the Jacobian number of a smooth point are $0$.
\end{rem}


\begin{cor}
\label{Corollary:K3}
Let $X$ be a smooth, proper, and connected surface
of \textit{non-negative} Kodaira dimension over $k$.
Let $C \subset X$ be a rational curve.
\begin{enumerate}
\item If the $\delta$ invariants are strictly less than $(p-1)/2$ at every closed point of $C$, then $C$ is topologically rigid.
\item If the Jacobian numbers are strictly less than $p$ at every closed point of $C$, then $C$ is topologically rigid.
\item If every singularity of $C$ is a node, then $C$ is topologically rigid.
\item If $p \geq 5$ and every singularity of $C$ is either a node or 
  an ordinary cusp, then $C$ is topologically rigid.
\item If $C^2+K_X\cdot C<p-3$, then $C$ is topologically rigid (see Corollary \ref{cor:easy}).
\end{enumerate}
\end{cor}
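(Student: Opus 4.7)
The plan is that each of the five assertions reduces, possibly after a short invariant computation, to the contrapositive of Theorem \ref{MainTheorem1}. Since $X$ has non-negative Kodaira dimension, by that theorem $X$ cannot contain a non-rigid rational curve whose singularities satisfy either of the two bounds stated there, so any rational curve on $X$ meeting one of the two bounds is automatically rigid. This gives (1) and (2) at once.

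For (3) and (4) I would compute the Jacobian number at each singularity type. A node is locally modeled on $f=ST \in k[[S,T]]$, and the formula recalled in the Introduction yields
$$
 \jac(C,x) \,=\, \dim_k k[[S,T]]/(f,f_S,f_T) \,=\, \dim_k k[[S,T]]/(S,T) \,=\, 1 \,<\, p,
$$
so every node satisfies the hypothesis of (2); this gives (3). For (4), nodes are handled as above, and an ordinary cusp is locally $f = T^2-S^3$. When $p \geq 5$ the elements $2,3 \in k$ are units, so $(f,f_S,f_T) = (-3S^2, 2T, T^2-S^3) = (S^2,T)$ and $\jac(C,x) = 2 < p$. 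Hence (4) also follows from (2).

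For (5) I would reduce to (1) via the adjunction formula on $X$, combined with the standard relation between the arithmetic genera of $C$ and its normalization $\pi \colon \widetilde{C}\to C$. Since $C$ is rational, $\widetilde{C}\cong \P^1$ and $p_a(\widetilde{C})=0$; the length of $\pi_\ast\O_{\widetilde{C}}/\O_C$ is $\sum_x \delta(C,x)$, while adjunction gives $2p_a(C)-2 = C^2+K_X\cdot C$, so
$$
 \sum_{x\in C}\delta(C,x) \,=\, p_a(C)-p_a(\widetilde{C}) \,=\, 1+\frac{C^2+K_X\cdot C}{2}.
$$
The assumption $C^2+K_X\cdot C < p-3$ then forces $\sum_x\delta(C,x) < (p-1)/2$, and a fortiori every individual $\delta(C,x)$ is strictly less than $(p-1)/2$; (1) applies.

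There is no serious conceptual obstacle here; the proof is essentially bookkeeping. The only points needing a little care are checking that the cusp computation genuinely requires the restriction $p\geq 5$ (the derivatives $2T$ and $-3S^2$ must be usable, which also excludes the problematic characteristics $2,3$ for the very notion of an ordinary cusp), and noting that $C^2+K_X\cdot C$ is automatically an even integer because $p_a(C)\in\Z$, which confirms that $<p-3$ is the right threshold to produce the bound $(p-1)/2$ from Theorem \ref{MainTheorem1}(1).
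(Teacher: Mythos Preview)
Your proposal is correct and follows essentially the same route as the paper: parts (1) and (2) are the contrapositive of Theorem \ref{MainTheorem1}; parts (3) and (4) reduce to (2) via the explicit Jacobian number computations for nodes and ordinary cusps (the paper records these in Proposition \ref{Proposition:cusp} and Corollary \ref{Corollary:RigidNodeCusp}); and part (5) reduces to (1) via adjunction exactly as in Corollary \ref{cor:easy}. Your local models $f=ST$ and $f=T^2-S^3$ differ from the paper's only by a harmless change of coordinates, and the resulting invariants agree.
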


\begin{rem}
We believe that both invariants are useful: $\delta$-invariants are more often used in the literature and they can be bounded using intersection theory;
see Corollary \ref{cor:easy}.
On the other hand, a node $x\in C$ satisfies $\delta(C,x)=\jac(C,x)=1$, i.e., to conclude the topological rigidity in Corollary \ref{Corollary:K3} (iii) in small characteristics, we have to use the criterion in terms of Jacobian numbers, since the criterion in terms $\delta$-invariants would only give topological rigidity for $p\geq5$.
\end{rem}

\begin{rem}
Our results are optimal in some sense.
Concerning Theorem \ref{MainTheorem1} and the first two parts of Corollary \ref{Corollary:K3},
see Proposition \ref{Proposition:JacobianOptimal} and Example \ref{ex:Quasi-ellipticK3char=3}.
Concerning Corollary \ref{Corollary:K3} (iv),
if $p = 2$ or $3$, then
there exist \textit{quasi-elliptic} surfaces of non-negative Kodaira dimension.
Such surfaces admit a fibration whose general fiber is a rational curve with an ordinary cusp;
see Section \ref{Subsection:QuasiEllipticFibrations}.
\end{rem}


Now, we come to our main results for varieties of higher dimensions.

\begin{thm}
\label{MainTheorem2}
Let $X$ be a smooth, proper, and connected variety over
an algebraically closed field $k$ of characteristic $p>0$
with $\dim(X) \geq 2$.
Assume that there exist an integral variety $U$ with
$\dim(U) = \dim(X) - 1$ and a closed subvariety 
$\mathscr{C} \subset U \times X$ with projections $\pi \colon \mathscr{C} \to U$ 
and $\varphi \colon \mathscr{C} \to X$ such that
\begin{enumerate}
\item $\varphi \colon \mathscr{C} \to X$ is dominant,
\item $\mathscr{C}$ gives rise to a family of rational curves on $X$, and 
\item $k(\mathscr{C}) \cap k(X)^{\sep}$ is a separable extension of $k(U) \cap k(X)^{\sep}$.
\end{enumerate}
Moreover, we assume that there is a closed point $u_0 \in U$ which satisfies \textit{at least one} of the following conditions:
\begin{enumerate}
\setcounter{enumi}{3}
    \item The $\delta$-invariants of $\mathscr{C}_{u_0}$ are strictly less than $(p-1)/2$ at every closed point.
    \item $\mathscr{C}_{u_0}$ is a local complete intersection rational curve on $X$ whose Jacobian numbers are strictly less than $p$ at every closed point.
\end{enumerate}
Then, $X$ is separably uniruled and thus, has negative Kodaira dimension.
\end{thm}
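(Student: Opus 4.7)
The plan is to reduce the theorem to the construction of a family of smooth rational curves on $X$ whose evaluation morphism is dominant and generically separable. The strategy parallels the surface case (Theorem \ref{MainTheorem1}), with the separability condition (3) playing the role that is vacuous when $\dim U = 1$.

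I would begin by taking the normalization $\nu \colon \widetilde{\mathscr{C}} \to \mathscr{C}$ and setting $\widetilde{\pi} := \pi \circ \nu$, $\widetilde{\varphi} := \varphi \circ \nu$. Since $\mathscr{C}$ is a family of integral rational curves, the geometric generic fiber of $\widetilde{\pi}$ is $\P^1$. The core technical step is to establish the following: \emph{the fiber $\widetilde{\pi}^{-1}(u_0)$ is isomorphic to $\P^1$}, or equivalently, formation of the normalization commutes with specialization to $u_0$. This is precisely where hypotheses (4) and (5) enter: the bound $\delta(\mathscr{C}_{u_0},x) < (p-1)/2$, respectively $\jac(\mathscr{C}_{u_0},x) < p$ in the lci case, at every closed point $x$ controls the discrepancy between $\widetilde{\pi}^{-1}(u_0)$ and the normalization of $\mathscr{C}_{u_0}$. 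Intuitively, an obstruction to this compatibility would force inseparable phenomena through a Frobenius factorization, which in turn imposes $p$-divisibility constraints on the singularity invariants; the strict inequality is sharp, as witnessed by Proposition \ref{Proposition:JacobianOptimal} and Example \ref{ex:Quasi-ellipticK3char=3}.

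Granting this key claim, semi-continuity yields a Zariski open $U^\circ \subset U$ containing $u_0$ on which $\widetilde{\pi}$ is smooth. Setting $\widetilde{\mathscr{C}}^\circ := \widetilde{\pi}^{-1}(U^\circ)$, the restricted family $\widetilde{\pi}^\circ \colon \widetilde{\mathscr{C}}^\circ \to U^\circ$ is a smooth family of curves of genus $0$, and $\widetilde{\varphi}^\circ \colon \widetilde{\mathscr{C}}^\circ \to X$ is dominant (inherited from $\varphi$ via the surjection $\nu$) and generically finite for dimensional reasons. To conclude separable uniruledness, it remains to verify that $\widetilde{\varphi}^\circ$ is generically \'etale. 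Since $\widetilde{\pi}^\circ$ is smooth, the extension $k(\widetilde{\mathscr{C}}^\circ)/k(U^\circ)$ is separable; combined with hypothesis (3) and the equalities $k(\widetilde{\mathscr{C}}^\circ) = k(\mathscr{C})$ and $k(U^\circ) = k(U)$, this translates into the separability of $k(\widetilde{\mathscr{C}}^\circ)/k(X)$. Proposition \ref{Proposition:SeparablyUniruledPlurigenera} then gives negative Kodaira dimension.

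The main obstacle is the key claim, i.e., the compatibility of normalization with specialization at $u_0$. This is the same obstacle as in the surface case and justifies the preparatory work comparing $\delta$-invariants, Jacobian numbers, K\"ahler differentials, and Fitting ideals (cf.\ Proposition \ref{Proposition:lci} and Corollary \ref{Corollary:JacobianNumberDerivative}). By contrast, the remaining steps involve only standard dimension-counting and manipulation of function fields, requiring no new ideas beyond what is encoded in condition (3).
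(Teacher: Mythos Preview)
Your argument has a genuine gap in the final separability step. You claim that from the smoothness of $\widetilde{\pi}^\circ$ (giving separability of $k(\mathscr{C})/k(U)$) together with hypothesis (3), one deduces that $k(\mathscr{C})/k(X)$ is separable. This does not follow. Condition (3) asserts only that the intermediate field $k(\mathscr{C})\cap k(X)^{\sep}$ is separable over $k(U)\cap k(X)^{\sep}$; it says nothing about the top piece $k(\mathscr{C})/\bigl(k(\mathscr{C})\cap k(X)^{\sep}\bigr)$, which is purely inseparable and may well be nontrivial. Indeed, the entire difficulty of the theorem is precisely that $\varphi\colon\mathscr{C}\to X$ can be inseparable even when the generic fiber of $\pi$ normalizes to $\P^1$; your argument, if valid, would prove the theorem without condition (3), contradicting Proposition \ref{Proposition:CounterexampleHigherDimension}.

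The paper's route is essentially different from yours and explains the true role of condition (3). Rather than normalizing $\mathscr{C}$, one constructs an intermediate variety $\mathscr{C}'$ with function field $k(\mathscr{C}')=k(\mathscr{C})\cap k(X)^{\sep}$, so that $\mathscr{C}'\to X$ is separable \emph{by construction}. The Key Lemma (Lemma \ref{KeyLemma}) then produces a fibration $\mathscr{C}'\to U'$ with $k(U')=k(U)\cap k(X)^{\sep}$, and condition (3) is used to guarantee that this fibration has \emph{geometrically reduced} generic fiber. Only then do the bounds on $\delta$-invariants or Jacobian numbers (transferred from $\varphi(\mathscr{C}_u)$ to $\mathscr{C}'_{u'}$ via Proposition \ref{Proposition:JacobianNumberFiniteMorphism} or \ref{Proposition:BirationalDeltaInvariant}, then to the generic fiber via upper semicontinuity) combine with Theorem \ref{MainTheorem3} or \ref{Theorem:TateGenusChange} to force the generic fiber of $\mathscr{C}'\to U'$ to be smooth, hence $\P^1$. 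The singularity hypotheses thus control the geometry of $\mathscr{C}'$, not of the normalization $\widetilde{\mathscr{C}}$; your ``key claim'' about $\widetilde{\pi}^{-1}(u_0)$ is in the wrong place and would not salvage the argument even if established.
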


Concerning the intersection in (iii), we let 
$k(X)^{\sep}$ be the separable closure of $k(X)$ in a fixed 
algebraic closure $k(X)^{\alg}$ of $k(X)$.
The generically finite morphism $\varphi$ induces a finite field extension
$k(X)\subset k(\mathscr{C})$ and we may embed $k(\mathscr{C})$ into
$k(X)^{\rm alg}$.
Thus, $k(\mathscr{C}) \cap k(X)^{\sep}$ is the separable closure of $k(X)$
inside $k(\mathscr{C})$.
We embed $k(U)$ into $k(\mathscr{C})$ via $\pi$.

\begin{rem}
By a theorem of Mac Lane \cite{MacLane},
a field extension $L/K$ in characteristic $p>0$ is separable if and only
if $L\otimes_KK^{1/p}$ is a field.
\end{rem}



\begin{rem}
If $\dim(X) \geq 3$, the condition (iii) is really necessary;
see Proposition \ref{Proposition:CounterexampleHigherDimension}.
It is related to the characteristic-$p$ phenomenon
of the existence of fibrations between smooth varieties,
whose geometric generic fiber is not reduced as in \cite{MoriSaito, Sato:Uniruled, Schroeer:Nagoya}.
\end{rem}

\begin{rem}
In the course of the proofs of main results of this article,
we give a sufficient criterion in terms of Jacobian numbers
(resp.\ $\delta$-invariants) for the smoothness of the normalization of a curve over an imperfect field of characteristic $p > 0$.
Such results might be of independent interest.
See Theorem \ref{MainTheorem3} and Theorem \ref{Theorem:TateGenusChange} for details.
\end{rem}

This article is organized as follows.
In Section \ref{Section:KodairaDimension},
we recall some well-known results about the Kodaira dimension, about
separably uniruled varieties, and about birationally ruled varieties.
In Section \ref{Section:FamilyRationalCurve},
we discuss rational curves, maps from curves with rational components,
and topological rigidity.
In Section \ref{Section:JacobianNumber}, we collect some basic properties of
Jacobian numbers of curves over arbitrary fields.
In Section \ref{Section:DeltaInvariant}, we recall the definition and basic properties of $\delta$-invariants of curves over arbitrary fields.
In Section \ref{Section:KeyLemma},
we prove a lemma that is used in the proof of
Theorem \ref{MainTheorem1} and Theorem \ref{MainTheorem2}.
In Section \ref{Section:ProofMainTheorem},
we prove Theorem \ref{MainTheorem2}.
Then, Theorem \ref{MainTheorem1} is an easy consequence of 
Theorem \ref{MainTheorem2}.\
Finally, in Section \ref{Section:Examples},
we give some examples of topologically rigid and topologically non-rigid rational curves on surfaces.
We also give counterexamples to a naive generalization of
Theorem \ref{MainTheorem1} to higher dimensions.

\section{The Kodaira dimension of separably uniruled varieties}
\label{Section:KodairaDimension}

Let $k$ be an arbitrary field. 
In this article, for simplicity, a \textit{variety} over $k$ means
a separated scheme of finite type over $k$.
A \textit{surface} over $k$ means a variety of pure dimension $2$ over $k$.
Moreover, a \textit{curve} over $k$ means a variety of pure dimension $1$ over $k$.

The following result is well-known.

\begin{prop}
\label{Proposition:SeparablyUniruledPlurigenera}
Let $X$ be a smooth, proper,
and connected variety over an algebraically closed field $k$.
\begin{enumerate}
\item If $X$ is separably uniruled, then $X$ has negative Kodaira dimension.
\item Moreover, if $X$ is a surface,
then the following are equivalent:
\begin{enumerate}
\item $X$ is birationally equivalent to a ruled surface.
\item $X$ is separably uniruled.
\item $X$ has negative Kodaira dimension.
\end{enumerate}
\end{enumerate}
\end{prop}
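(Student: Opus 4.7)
The plan is to show that $H^0(X,\omega_X^{\otimes m})=0$ for every integer $m\geq 1$, which forces $\kappa(X)=-\infty$. Starting from a dominant separable rational map $\psi\colon\P^1\times Y\dashrightarrow X$ with $\dim(Y)=\dim(X)-1$, I would first replace $Y$ by a smooth proper birational model $Y'$ (using resolution of singularities in dimensions where this is available, or de~Jong's alterations combined with a finite cover otherwise; separability of the function field extension is preserved). After eliminating the indeterminacy of the resulting map via a birational modification, one obtains a smooth proper variety $Z$ together with a birational morphism $\rho\colon Z\to\P^1\times Y'$ and a generically finite, dominant, separable morphism $\tilde\psi\colon Z\to X$.

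\textbf{Key step: pullback of pluricanonical sheaves.}
Because $\tilde\psi$ is separable between smooth varieties of the same dimension, the differential $\tilde\psi^*\Omega^1_X\to\Omega^1_Z$ is generically an isomorphism; taking determinants yields an injection of invertible sheaves $\tilde\psi^*\omega_X\hookrightarrow\omega_Z$. Raising to the $m$-th tensor power and taking global sections, any nonzero section of $\omega_X^{\otimes m}$ pulls back to a nonzero section of $\omega_Z^{\otimes m}$, giving an injection
\[
H^0(X,\omega_X^{\otimes m})\hookrightarrow H^0(Z,\omega_Z^{\otimes m}).
\]
Since plurigenera are birational invariants among smooth proper varieties, the right-hand side equals $H^0(\P^1\times Y',\omega_{\P^1\times Y'}^{\otimes m})$. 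By the K\"unneth formula this vanishes for $m\geq 1$ because $\omega_{\P^1}^{\otimes m}=\O_{\P^1}(-2m)$ has no global sections. Thus $P_m(X)=0$ for all $m\geq 1$, proving part (1).

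\textbf{Plan for part (2).}
The implication (b)$\Rightarrow$(c) is an immediate special case of (1). The equivalence (a)$\Leftrightarrow$(c) is the classical Enriques--Kodaira--Bombieri--Mumford classification of smooth proper surfaces over an algebraically closed field, valid in every characteristic, which I would cite from the standard references on surface classification. Finally, for (a)$\Rightarrow$(b) one uses that a ruled surface over an algebraically closed field $k$ is birational to $\P^1\times C$ for a smooth projective curve $C$ (the possible Brauer obstruction vanishes because the Brauer group of a smooth curve over an algebraically closed field is trivial); any such birational equivalence $\P^1\times C\dashrightarrow X$ is itself a dominant rational map inducing an isomorphism of function fields, hence a fortiori separable.

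\textbf{Main obstacle.}
The subtle point is the construction of the smooth proper dominating variety $Z$ together with the separable morphism $\tilde\psi\colon Z\to X$ in arbitrary dimension and positive characteristic, where resolution of singularities is not available in full generality. This is handled by alterations, and one only needs the existence of some smooth proper $Z$ with a birational morphism to $\P^1\times Y'$ and preserving the separable extension $k(X)\subset k(Z)$. Once $Z$ is in hand, the sheaf injection $\tilde\psi^*\omega_X\hookrightarrow\omega_Z$ and the K\"unneth vanishing on $\P^1\times Y'$ are formal.
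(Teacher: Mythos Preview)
Your overall strategy for part~(1)---pulling back pluricanonical forms along a separable dominant map and using the vanishing coming from the $\P^1$-factor---is sound, but there is a genuine gap in the way you propose to handle the indeterminacy in positive characteristic. You write that ``one only needs the existence of some smooth proper $Z$ with a \emph{birational} morphism to $\P^1\times Y'$,'' and that ``this is handled by alterations.'' But de~Jong's alterations produce only a generically finite morphism $Z\to\P^1\times Y'$, not a birational one, and plurigenera are \emph{not} invariant under alterations (any hyperelliptic curve of genus $g\geq 1$ is an alteration of $\P^1$). So from an alteration you cannot conclude $H^0(Z,\omega_Z^{\otimes m})=H^0(\P^1\times Y',\omega_{\P^1\times Y'}^{\otimes m})$, and the argument fails at precisely the step you flag as the main obstacle.

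The fix is to drop properness of $Z$ entirely. Shrink $Y$ so that it is smooth (no compactification needed), and let $W\subset\P^1\times Y$ be the open domain of definition of $\psi$; since $\P^1\times Y$ is smooth, the complement of $W$ has codimension $\geq 2$. The separable dominant morphism $\psi|_W\colon W\to X$ between smooth varieties of the same dimension still yields an injection $(\psi|_W)^*\omega_X^{\otimes m}\hookrightarrow\omega_W^{\otimes m}$, hence $H^0(X,\omega_X^{\otimes m})\hookrightarrow H^0(W,\omega_W^{\otimes m})$. By normality of $\P^1\times Y$ the latter equals $H^0(\P^1\times Y,\omega_{\P^1\times Y}^{\otimes m})$, and this vanishes since its restriction to every fiber $\P^1\times\{y\}$ is $\O_{\P^1}(-2m)$. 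No resolution or alteration is needed. For comparison, the paper simply cites \cite[Corollary~IV.1.11]{Kollar:RationalCurveBook}, where the argument is organized around free rational curves: through a general point of a separably uniruled $X$ there passes a free morphism $f\colon\P^1\to X$, so $f^*\omega_X$ has negative degree and every pluricanonical section must vanish along $f(\P^1)$. Your treatment of part~(2) is fine and matches the paper's appeal to the classification of surfaces.
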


\begin{proof}
(i) If $X$ is separably uniruled, then $H^0(X,K_X^{\otimes m}) = 0$ for every $m \geq 1$; 
see \cite[Corollary IV.1.11]{Kollar:RationalCurveBook}.
Hence, $X$ has negative Kodaira dimension.

(ii) This follows from the classification of surfaces; see e.g.\ \cite[Theorem 13.2]{Badescu}.
\end{proof}

\begin{rem}
It is conjectured that the converse to Proposition \ref{Proposition:SeparablyUniruledPlurigenera} (i) 
holds (at least in characteristic zero); see \cite[Conjecture IV.1.12]{Kollar:RationalCurveBook} for details
and partial results in this direction.
\end{rem}

\begin{lem}
\label{Lemma:SeparablyUniruledFiberProduct}
Let $X$ be a smooth, proper, and connected variety
over an algebraically closed field $k$.
Let $C$ be a smooth, proper, and connected curve over $k$ 
of genus $g \geq 1$.
Then, $X$ is separably uniruled
if and only if $X \times C$ is separably uniruled.
\end{lem}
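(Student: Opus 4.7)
For the easy direction $(1) \Rightarrow (2)$, given a dominant rational map $\psi \colon \P^1 \times Z \dashrightarrow X$ with $\dim Z = \dim X - 1$ inducing a separable extension of function fields, I would form $\psi \times \id_C \colon \P^1 \times (Z \times C) \dashrightarrow X \times C$.  This is dominant with $\dim(Z \times C) = \dim(X \times C) - 1$, and the induced extension $k(X \times C) \hookrightarrow k(\P^1 \times Z \times C)$ is obtained from the separable finite extension $k(X) \hookrightarrow k(\P^1 \times Z)$ by base change to $k(X \times C)$ and hence remains separable.

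The nontrivial direction is $(2) \Rightarrow (1)$, where the hypothesis $g(C) \geq 1$ is essential (note that $X \times \P^1$ is always separably uniruled, regardless of $X$).  Suppose $\Psi \colon \P^1 \times Y \dashrightarrow X \times C$ is a dominant separable rational map with $\dim Y = \dim X$.  Since $g(C) \geq 1$, there is no non-constant morphism $\P^1 \to C$, so the composition $\pi_C \circ \Psi \colon \P^1 \times Y \dashrightarrow C$ is constant on a general fiber $\P^1 \times \{y\}$ of $\pr_Y$ and therefore factors as $g \circ \pr_Y$ for some dominant rational map $g \colon Y \dashrightarrow C$.  After shrinking $Y$ so that $g$ is a morphism, I would pass to the generic fiber: $Y_\eta := Y \times_C \Spec k(C)$ is an integral $k(C)$-variety of dimension $\dim X - 1$, and the restriction $\Psi_\eta \colon \P^1_{k(C)} \times Y_\eta \dashrightarrow X_{k(C)}$ induces the same function field extension as $\Psi$ and is therefore dominant and separable.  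Thus $X_{k(C)}$ is separably uniruled over $k(C)$.

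Finally I would descend from $X_{k(C)}$ separably uniruled over $k(C)$ to $X$ separably uniruled over $k$.  Since $k$ is algebraically closed, $C$ has a dense set of $k$-rational points, and one may specialize the pair $(Y_\eta, \Psi_\eta)$ at a general such point $c \in C$: the closed fiber $Y_c$ is equidimensional of dimension $\dim X - 1$ by flatness of $g$ over the smooth curve $C$, and choosing an irreducible component $Y_c^0$ and restricting $\Psi$ yields a rational map $\P^1 \times Y_c^0 \dashrightarrow X$ which, for general $c$, is dominant and separable, witnessing the separable uniruledness of $X$.  The main obstacle I expect is this last descent step: in positive characteristic the fibers $Y_c$ can be non-reduced when $g$ is inseparable as a field extension, so one must carefully select an appropriate reduced irreducible component of $Y_c$ and verify via a standard spreading-out argument that dominance and separability of $\Psi_\eta$ persist under specialization.
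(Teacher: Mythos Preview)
Your argument is correct in outline and takes a genuinely different route from the paper. The paper's proof of $(2)\Rightarrow(1)$ avoids any specialization argument by invoking Koll\'ar's characterization \cite[Theorem IV.1.9]{Kollar:RationalCurveBook}: a smooth proper variety over an algebraically closed field is separably uniruled if and only if it admits a \emph{free} morphism from $\P^1$. Given a free $f\colon\P^1\to X\times C$, the composite $\pr_C\circ f$ is constant since $g(C)\ge 1$, so $f$ factors through some $X\times\{c\}\hookrightarrow X\times C$; because $f^*T_{X\times C}\cong f'^*T_X\oplus\O_{\P^1}$ for the induced $f'\colon\P^1\to X$, the map $f'$ is again free, and one is done. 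This is much shorter but uses that black box. Your direct approach via generic fibers and specialization is more elementary; a pleasant simplification you may not have noticed is that your worry about non-reduced fibers cannot materialize: since $k(\P^1\times Y)/k(X\times C)$ is finite separable and $X\times C\to C$ is smooth, the extension $k(\P^1\times Y)/k(C)$ is separable, and hence so is the subextension $k(Y)/k(C)$, which forces $g\colon Y\to C$ to have geometrically reduced generic fiber. With this in hand the descent goes through cleanly: the \'etale locus $W\subset\P^1\times Y$ of $\Psi$ meets the general fiber $\P^1\times Y_c$ in a dense open reduced subscheme whose image in $X$ is open and nonempty (hence dense), and any irreducible component of $W_c$ dominating $X$ yields the desired separable dominant rational map $\P^1\times Y_c^0\dashrightarrow X$.
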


\begin{proof}
Obviously, if $X$ is separably uniruled, then
$X \times C$ is separably uniruled.
Conversely, assume that $X \times C$ is separably uniruled.
By \cite[Theorem IV.1.9]{Kollar:RationalCurveBook}, there exists 
a free morphism $f \colon \P^1 \to X \times C$.
Namely, $H^1(\P^1, f^{*}T_{X \times C})=0$ and 
$f^{*}T_{X \times C}$ is generated by global sections,
where $T_{X \times C}$ denotes the tangent sheaf of $X \times C$.
Since the genus $g$ of $C$ is strictly larger than $0$, 
the image of the morphism $\P^1 \to C$,
which is the composition of $f$ and the projection onto
the second factor
$\mathrm{pr}_2 \colon X \times C \to C$
is a closed point.
Hence $f$ factors as $\P^1 \to X \to X \times C$.
Since $f$ is a free morphism, 
it follows that $\P^1 \to X$ is also free.
Using \cite[Theorem IV.1.9]{Kollar:RationalCurveBook} again,
we conclude that $X$ is separably uniruled.
\end{proof}

\section{Families of rational curves and maps from curves with rational components.}
\label{Section:FamilyRationalCurve}

In this section, we fix some definitions and recall some basic properties of
topologically non-rigid rational curves and maps from curves with rational components.
The standard reference for rational curves on varieties
is Koll\'ar's book \cite{Kollar:RationalCurveBook}.
The following definitions of rational curves and maps from curves with rational components will be used in the present article.
(Slightly different notions might be used in the literature, but
there should be no major differences.)

\begin{defn}
\label{Definition:FamilyRationalCurve}
Let $X$ be a proper variety over an algebraically closed field $k$.
Let $U$ be an integral variety over $k$.
\begin{enumerate}
\item
A {\it rational curve} on $X$ is an integral closed subvariety
$C \subset X$ of dimension $1$, whose normalization is isomorphic
to $\P^1$ over $k$.

\item
A \textit{flat family of rational curves} parameterized by $U$
is a proper flat morphism $\pi \colon \mathscr{C} \to U$ such that,
for every geometric point $s \to U$,
the geometric fiber $\mathscr{C}_{s} := s \times_{U}{\mathscr{C}}$
is an integral rational curve over the residue field $\kappa(s)$.
More generally,
a \textit{flat family of curves with rational components} parameterized by $U$
is a proper flat morphism $\pi \colon \mathscr{C} \to U$ such that,
for every geometric point $s \to U$,
the geometric fiber $\mathscr{C}_{s}$
is a reduced curve over $\kappa(s)$ whose irreducible components are rational curves.

\item
A \textit{family of rational curves on $X$} parametrized by $U$ is
a closed subscheme $\mathscr{C} \subset U \times X$ such that
the projection
$\pi \colon \mathscr{C} \to U$
is a flat family of rational curves.
Let $\varphi \colon \mathscr{C} \to X$ be the projection to $X$.

A rational curve $C \subset X$ is said to be \textit{topologically non-rigid} if
there exists a family of rational curves $(\pi,\varphi)$ on $X$ with
$\dim (\varphi(\mathscr{C})) \geq 2$ and
$\varphi(\mathscr{C}_u) = C$
for some closed point $u \in U$.
Otherwise, we say $C$ is \textit{topologically rigid}.
(In particular, a topologically rigid curve in our sense is allowed to deform 
infinitesimally on $X$, but not in a positive dimensional family.)

\item 
A \textit{map from a family of rational curves} to $X$
parametrized by $U$ is a pair of morphisms
$\pi \colon \mathscr{C} \to U$ and $\varphi \colon \mathscr{C} \to X$ over $k$
such that
\begin{enumerate}
\item $\pi$ is a flat family of rational curves, and
\item $\dim(\varphi(\mathscr{C}_{s}))=1$
for every geometric point $s \to U$.
\end{enumerate}

\item
A \textit{map from a curve with rational components} to $X$ is a morphism
$f \colon C \to X$ over $k$ such that
\begin{enumerate}
\item $C$ is a reduced curve over $k$ whose irreducible components are rational curves, and
\item the image $f(C)$ is of pure dimension $1$.
\end{enumerate}
We say $f \colon C \to X$ is a \textit{generic immersion}
if the restriction of $f$ to an open dense subset of $C$ is
an immersion.

\item
A \textit{map from a family of curves with rational components} to $X$
parametrized by $U$ is a pair of morphisms
$\pi \colon \mathscr{C} \to U$ and $\varphi \colon \mathscr{C} \to X$ over $k$
such that
\begin{enumerate}
\item $\pi$ is a flat family of curves with rational components, and
\item for every geometric point $s \to U$, the induced map
$\varphi_{s} \colon \mathscr{C}_{s} \to X_{s}$
is a map from a curve with rational components to $X_s$.
\end{enumerate}

\item A map from a curve with rational components $f \colon C \to X$ over $k$ is \textit{topologically non-rigid}
if there exists a pair $(\pi,\varphi)$ as in (vi) such that
\begin{enumerate}
\item $\dim \varphi(\mathscr{C}) \geq 2$, and
\item $\varphi_{u_0} \colon \mathscr{C}_{u_0} \to X$
is identified with $f$ for some closed point $u_0 \in U$.
\end{enumerate}
If there does not exist such a pair $(\pi,\varphi)$, then
we say that $f$ is \textit{topologically rigid}.
\end{enumerate}
\end{defn}


\begin{lem}
\label{Lemma:StableMapFamilyImage}
Let $X$ be a proper and integral variety over an algebraically closed field $k$,
and let $U$ be an integral variety over $k$.
Let $\pi \colon \mathscr{C} \to U$ and $\varphi\colon\mathscr{C}\to X$ be morphisms over $k$.
Assume that $\mathscr{C}$ is reduced and that $\pi$ is proper and flat with one-dimensional 
fibers.
Let
$ W := (\pi \times \varphi)(\mathscr{C})$
be the image of
$\pi \times \varphi \colon \mathscr{C} \to U \times X$
endowed with the reduced induced subscheme structure.
Let $\pr_1 \colon W \to U$ be the projection onto the first factor.
\begin{enumerate}
\item Assume that the fiber $\mathscr{C}_{u}$ is reduced for some closed point $u \in U$.
Then, there exists an open dense subset $U' \subset U$ such that
the fiber
$ \pr_1^{-1}(s) := s \times_{U} W$
is reduced for every geometric point $s \to U'$.

\item 
Assume moreover that $X$ is a smooth, proper, and connected surface, $U$ is a smooth curve, $\mathscr{C}$ is irreducible, and the fiber $\mathscr{C}_{u_0}$ is generically reduced
and $\varphi_{u_0} \colon \mathscr{C}_{u_0} \to X$ is a generic immersion for some closed point $u_0 \in U$.
Then, there exists an open neighborhood
$u_0 \in U' \subset U$
such that the fiber
$\pr_1^{-1}(s)$
is reduced for every geometric point $s \to U'$.
\end{enumerate}
\end{lem}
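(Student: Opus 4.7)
For part (1), my plan is to pass to the geometric generic point. After shrinking $U$ to a Zariski dense open $U_0 \subseteq U$ on which $\pr_1 \colon W \to U_0$ is flat — possible by generic flatness applied to the finite-type morphism $W \to U$ with $W$ reduced and $U$ integral — I will show that the geometric generic fiber $W_{\bar\eta}$ is reduced, and then invoke openness of the geometrically reduced locus for flat morphisms locally of finite presentation (EGA IV.12.2.4) to spread this to a dense open $U' \subseteq U_0$. To see $W_{\bar\eta}$ is reduced: since $\pi$ is flat and $\mathscr{C}_u$ is reduced at the closed point $u$, where $k(u)=k$ is algebraically closed, the locus $\{s \in U : \mathscr{C}_s \text{ is geometrically reduced}\}$ is open and nonempty in the irreducible $U$, hence contains the generic point $\eta$, so $\mathscr{C}_{\bar\eta}$ is reduced. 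Because scheme-theoretic image commutes with the flat base change $\Spec \overline{k(\eta)} \to U$, $W_{\bar\eta}$ is the scheme-theoretic image of the reduced $\mathscr{C}_{\bar\eta} \to X_{\bar\eta}$, and is therefore reduced.

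For part (2), the conclusion will follow from the stronger assertion that $W_{u_0}$ itself is reduced (then the same openness theorem applies directly to $W \to U$, which this time is flat over all of $U$). I will establish this by verifying (i) $W$ is flat over $U$, (ii) $W_{u_0}$ has no embedded points, and (iii) $W_{u_0}$ is generically reduced. For (i), $W$ is irreducible (as the reduced scheme-theoretic image of the irreducible $\mathscr{C}$) and dominates $U$, and a dominant morphism from an integral scheme to a smooth curve is flat. For (ii), the ambient $U \times X$ is smooth of dimension $3$ and hence locally factorial, so the codimension-one reduced irreducible $W$ is a Cartier divisor, hence a local complete intersection, hence Cohen--Macaulay. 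Cutting $W$ by the pullback of a local parameter of $U$ at $u_0$ — a non-zero-divisor by flatness of $W \to U$ and integrality of $W$ — yields the Cohen--Macaulay scheme $W_{u_0}$, which therefore has no embedded points.

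For (iii), I use the generic embedding hypothesis. Choose a dense open $V \subseteq \mathscr{C}_{u_0}$ contained in the reduced locus of $\mathscr{C}_{u_0}$ on which $\varphi_{u_0}|_V$ is an immersion. Then $V \to X$ is an immersion landing inside the closed subscheme $W_{u_0} \subseteq X$, so $V$ becomes a locally closed subscheme of $W_{u_0}$. Now $W_{u_0}$ is pure $1$-dimensional since $W \to U$ is flat of relative dimension one, while $V$ is pure $1$-dimensional and its image is dense in $W_{u_0}$ (its complement in $\mathscr{C}_{u_0}$ is $0$-dimensional and has $0$-dimensional image); therefore $V$ is in fact a reduced dense open subscheme of $W_{u_0}$, giving generic reducedness. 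Combining (ii) and (iii), $W_{u_0}$ is reduced. The main delicate point is (iii): it is crucial that the pure-dimensionality of $W_{u_0}$ forces the locally closed immersion $V \hookrightarrow W_{u_0}$ to be open, and this relies on the honest flatness of $W \to U$ available in (2), in contrast to the merely generic flatness used in (1).
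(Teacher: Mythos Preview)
Your proof of part (1) is correct and matches the paper's argument: reduce to the geometric generic fiber, observe it is the schematic image of the reduced $\mathscr{C}_{\bar\eta}$, and spread out using EGA IV.12.2.4.

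For part (2), your overall strategy coincides with the paper's: show $W_{u_0}$ is reduced by establishing flatness of $W\to U$, absence of embedded points via Cohen--Macaulayness, and generic reducedness. Steps (i) and (ii) are fine. However, your argument for (iii) has a gap. From the hypotheses you obtain a locally closed immersion $V\hookrightarrow W_{u_0}$ with $V$ reduced and topologically dense, and you conclude that $V$ is a dense \emph{open subscheme} of $W_{u_0}$. This inference is false: the closed immersion of a reduced line $\Spec k[t]$ into the double line $\Spec k[t,\epsilon]/(\epsilon^2)$ is a locally closed immersion between pure $1$-dimensional schemes with dense image, yet it is not an open immersion and the target is nowhere reduced. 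In general, a locally closed immersion with dense image factors as an open immersion into the \emph{scheme-theoretic} closure, which here is only $(W_{u_0})_{\mathrm{red}}$, not $W_{u_0}$ itself. So you have shown $V\hookrightarrow (W_{u_0})_{\mathrm{red}}$ is an open immersion, which says nothing about the nilpotents of $W_{u_0}$.

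The paper closes this gap by a cycle-theoretic comparison. Viewing $\mathscr{C}_{u_0}$ and $W_{u_0}$ as Cartier divisors (pullbacks of $u_0$) on $\mathscr{C}$ and $W$, the projection formula gives
\[
(\pi\times\varphi)_*[\mathscr{C}_{u_0}]\;=\;(\pi\times\varphi)_*(\pi\times\varphi)^*[W_{u_0}]\;=\;d\cdot[W_{u_0}],
\]
where $d=[k(\mathscr{C}):k(W)]$. Since $\mathscr{C}_{u_0}$ is generically reduced and $\varphi_{u_0}$ is a generic embedding, the left-hand cycle has all multiplicities equal to $1$; hence $d=1$ and every component of $W_{u_0}$ has multiplicity $1$, i.e., $W_{u_0}$ is generically reduced. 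Your immersion $V\hookrightarrow W_{u_0}$ alone cannot detect these multiplicities; the degree $d$ of $\mathscr{C}\to W$ is the missing global ingredient.
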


\begin{proof}
(i) Let $\overline{\eta} \to U$ be the geometric generic point. 
The fiber $\pr_1^{-1}(\overline{\eta})$ is the schematic image of the morphism
$\mathscr{C}_{\overline{\eta}} \to X \otimes_{k} \kappa(\overline{\eta})$. 
Since $\mathscr{C}_{\overline{\eta}}$ is reduced by \cite[Th\'eor\`eme 12.2.4 (v)]{EGA4-3},
it follows that $\pr_1^{-1}(\overline{\eta})$ is reduced. 
After possibly shrinking $U$, the fiber $\pr_1^{-1}(s)$ is reduced for every 
geometric point $s \to U$ by \cite[Th\'eor\`eme 12.2.4 (v)]{EGA4-3}.

(ii)  It is enough to show that the fiber $\pr_1^{-1}(u_0)$ is reduced; see \cite[Th\'eor\`eme 12.2.4 (v)]{EGA4-3}.
Since $U \times X$ is smooth over $k$, its reduced closed subscheme $W \subset U \times X$ 
is a Cartier divisor.
Moreover $W$ is flat over $U$ since $U$ is a smooth curve over $k$. 
Since $\pr_1^{-1}(u_0)$ has no embedded points by \cite[Chapter 8, Proposition 2.15]{Liu:Book},
we only need to prove that it is generically reduced; see \cite[Chapter 7, Exercise 1.2]{Liu:Book}.
We consider $\mathscr{C}_{u_0} = \pi^{-1}(u_0) \subset \mathscr{C}$ and
$\pr_1^{-1}(u_0) \subset W$ as Cartier divisors on $\mathscr{C}$ and $W$, respectively.
Since $\mathscr{C}_{u_0} = (\pi \times \varphi)^{\ast} \pr_1^{-1}(u_0)$,
we have the following equality of $1$-cycles on $W$:
\[
(\pi \times \varphi)_{\ast} [\mathscr{C}_{u_0}]
\,=\, (\pi \times \varphi)_{\ast} \left((\pi \times \varphi)^{\ast} [\pr_1^{-1}(u_0)]\right)
\,=\, d\cdot [\pr_1^{-1}(u_0)],
\]
where $d := [k(\mathscr{C}) : k(W)]$ is the extension degree of function fields; see \cite[Theorem 7.2.18]{Liu:Book}.
By our assumptions on $\varphi_{u_0}$, 
we have $d = 1$ and thus, the Cartier divisor $\pr_1^{-1}(u_0) \subset W$
has multiplicity one.
Consequently, the fiber $\pr_1^{-1}(u_0)$ is generically reduced.
\end{proof}

The following result is well-known, at least in characteristic zero. 
We give a brief sketch of the proof for the reader's convenience. 
(See also \cite[Proposition IV.1.3]{Kollar:RationalCurveBook}.)

\begin{prop}
\label{Proposition:UniruledExistenceNonRigidRationalCurves}
For a proper and integral variety $X$ with $\dim(X) \geq 2$
over an algebraically closed field $k$,
the following conditions are equivalent:
\begin{enumerate}
\item $X$ is uniruled.

\item $X$ is dominated by a family of rational curves on $X$, i.e.,
there exist an integral variety $U$ with $\dim(U) = \dim(X) - 1$
and a closed subvariety $\mathscr{C} \subset U \times X$
as in Definition \ref{Definition:FamilyRationalCurve} (iii)
such that $\varphi \colon \mathscr{C} \to X$ is dominant.

\item $X$ is dominated by a family of curves with rational components, i.e.,
there exist an integral variety $U$ with $\dim(U) = \dim(X) - 1$
and a pair $(\pi,\varphi)$
as in Definition \ref{Definition:FamilyRationalCurve} (vi)
such that $\varphi \colon \mathscr{C} \to X$ is dominant.
\end{enumerate}
\end{prop}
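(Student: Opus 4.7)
The plan is to establish the cyclic chain of implications $(2) \Rightarrow (3) \Rightarrow (1) \Rightarrow (2)$. The implication $(2) \Rightarrow (3)$ is essentially tautological, since a family of rational curves is automatically a family of curves with rational components (each geometric fiber is a single rational curve, which is itself a curve with one rational component, and $f(C) = C$ is automatically one-dimensional).

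For $(3) \Rightarrow (1)$, suppose we are given a pair $(\pi,\varphi)$ as in Definition \ref{Definition:FamilyRationalCurve} (5) with $\varphi$ dominant and $\dim U = \dim X - 1$. Since $\dim \mathscr{C} = \dim U + 1 = \dim X$, some irreducible component $\mathscr{C}^0 \subset \mathscr{C}$ must dominate $X$, and after replacing $\mathscr{C}$ by $\mathscr{C}^0$ we may assume $\mathscr{C}$ is integral. Then the generic fiber $\mathscr{C}_\eta$ over $\eta = \Spec\, k(U)$ is integral of dimension one, and its geometric version $\mathscr{C}_{\overline\eta}$ is reduced with rational components (by hypothesis). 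Some component of the normalization of $\mathscr{C}_{\overline\eta}$ is isomorphic to $\P^1_{\overline{k(U)}}$ and is defined over a finite extension $L/k(U)$. Choosing an integral variety $Y$ with $k(Y) = L$ and dominating $U$, we obtain a rational map $\P^1 \times Y \dashrightarrow X$ which is dominant since $\varphi$ was dominant and the chosen component sweeps out $X$ as $u$ varies. Since $\dim Y = \dim U = \dim X - 1$, this shows $X$ is uniruled.

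For $(1) \Rightarrow (2)$, assume we have a dominant rational map $\psi \colon \P^1 \times Y \dashrightarrow X$ with $\dim Y = \dim X - 1$. For a general closed point $y \in Y$, the indeterminacy locus meets $\P^1 \times \{y\}$ in finitely many points, and since $\P^1$ is smooth, the restriction $\psi|_{\P^1 \times \{y\}}$ extends to a morphism $\P^1 \to X$. After shrinking $Y$ to a suitable open subset $U$, we obtain a bona fide morphism $\mu \colon \P^1 \times U \to X$ such that $\mu_u \colon \P^1 \to X$ is non-constant for every $u \in U$. Consider the morphism
\[
(\pr_U, \mu) \colon \P^1 \times U \,\longrightarrow\, U \times X,
\]
and let $\mathscr{C} \subset U \times X$ be its image endowed with the reduced induced structure. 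For each $u \in U$, the set-theoretic fiber of $\pr_1 \colon \mathscr{C} \to U$ over $u$ equals the image of $\mu_u$, which with reduced structure is an integral rational curve on $X$. After further shrinking $U$ if necessary, we may invoke generic flatness to ensure $\mathscr{C} \to U$ is flat, and upper semicontinuity guarantees that every fiber is then an integral rational curve. The composition $\varphi \colon \mathscr{C} \hookrightarrow U \times X \to X$ is dominant because $\mu$ (and hence $\psi$) is, so $\mathscr{C}$ provides the family required by (2).

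The main point requiring attention is the final shrinking in $(1) \Rightarrow (2)$: one must verify that every geometric fiber of $\mathscr{C} \to U$ is integral, not just generically reduced. This is not an obstacle in our setup because each such fiber is dominated by a single $\P^1_{k(u)}$, forcing it to be integral and rational. The inseparability of the function field extension induced by $\psi$ in positive characteristic does not cause any issue here, as integrality of fibers is a purely geometric condition on images that persists under the shrinking arguments.
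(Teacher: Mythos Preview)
Your cycle $(2) \Rightarrow (3) \Rightarrow (1) \Rightarrow (2)$ runs in the opposite direction from the paper's $(1) \Rightarrow (3) \Rightarrow (2) \Rightarrow (1)$, but the actual content is nearly identical: your $(1) \Rightarrow (2)$ is the concatenation of the paper's $(1) \Rightarrow (3)$ (eliminating the indeterminacy of $\psi$ by shrinking $Y$) and its $(3) \Rightarrow (2)$ (taking the reduced image in $U \times X$), and your $(3) \Rightarrow (1)$ is a mild generalization of the paper's $(2) \Rightarrow (1)$ (pass to a finite extension of $k(U)$ so that the normalized generic fiber becomes a smooth genus-zero curve).

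There is, however, a genuine gap in your $(1) \Rightarrow (2)$. You correctly flag the delicate point---that each geometric fiber of $\mathscr{C} \to U$ must be integral \emph{as a scheme}---but your resolution is wrong: being dominated by $\P^1_{k(u)}$ only forces the \emph{reduction} of the fiber to be an integral rational curve, not the scheme-theoretic fiber itself. The correct argument, which the paper isolates as Lemma~\ref{Lemma:StableMapFamilyImage}(1), is that the geometric generic fiber $\mathscr{C}_{\overline{\eta}}$ coincides with the scheme-theoretic image of $\P^1_{\overline{k(U)}}$ in $X_{\overline{k(U)}}$ (formation of scheme-theoretic image commutes with the flat base change $\overline{\eta} \to U$) and is therefore reduced; one then invokes \cite[Th\'eor\`eme 12.2.4]{EGA4-3} to spread reducedness and irreducibility to all fibers over a nonempty open of $U$. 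Your phrase ``upper semicontinuity guarantees\ldots'' is presumably gesturing at this last step, but the reason the geometric generic fiber is reduced was never supplied, and the substitute reason you offer in the final paragraph does not establish it.
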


\begin{proof}
(i) $\Rightarrow$ (iii):
Assume that $X$ is uniruled. Then there exists a dominant rational map
$\psi \colon \P^1 \times Y \dashrightarrow X$
with $\dim(Y) = \dim(X) - 1$.
Shrinking $Y$ if necessary, we may assume $Y$ is smooth.
Then, $\psi$ is defined in codimension $1$ and thus,
there exists a closed subvariety $Z \subset \P^1 \times Y$ with
$\dim(Z) \leq \dim(Y) - 1 $
such that $\psi$ is defined outside $Z$.
Removing $\pr_2(Z)$ from $Y$, we may assume that $\psi$ is defined everywhere.
Then, $\psi \colon \P^1 \times Y \to X$ gives rise to
a map from a family of curves with rational components parametrized by $Y$
and dominating $X$.

(iii) $\Rightarrow$ (ii):
Take a pair $(\pi,\varphi)$ as in
Definition \ref{Definition:FamilyRationalCurve} (vi).
Replacing $\mathscr{C}$ by an irreducible component that dominates $X$ and shrinking $U$, 
we may assume that the fiber $\pr_1^{-1}(s)$ of the image
$ W := (\pi \times \varphi)(\mathscr{C}) $
is an integral rational curve 
for every geometric point $s \to U$ by Lemma \ref{Lemma:StableMapFamilyImage} (i).

(ii) $\Rightarrow$ (i):
Choose a closed subvariety $\mathscr{C} \subset U \times X$
as in Definition \ref{Definition:FamilyRationalCurve} (iii).
Let $K := k(U)$ be the function field of $U$.
After replacing $U$ by a finite covering $U' \to U$ and
replacing $\mathscr{C}$ by the normalization of the base change
$\mathscr{C} \times_U U'$,
we find a dominant morphism $\mathscr{C} \to X$
such that the generic fiber $\mathscr{C}_K$ is
a geometrically irreducible and smooth curve over $K$; 
see \cite[Proposition 17.15.14]{EGA4-4}.
Moreover, shrinking $U$ further and replacing $U$ by an \'etale covering,
we may assume that $\mathscr{C} \to U$ is a $\P^1$-bundle.
Hence, $X$ is uniruled.
\end{proof}

\section{Jacobian numbers of curves over arbitrary fields}
\label{Section:JacobianNumber}


In this section, we fix an arbitrary field $k$ of characteristic $p\geq0$
and we recall the definition and basic properties of Jacobian numbers of curves over $k$.
Most of the results in this section are well-known if $k = \C$.
We also give brief proofs of the results recalled in this section
because we need to apply them to curves over function fields of curves,
for which we could not find appropriate references.

\subsection{Definition of Jacobian numbers}

Let $k^{\alg}$ be an algebraic closure of $k$ and let
$k^{\rm{sep}}$ be the separable closure of $k$ in $k^{\alg}$.
Let $C$ be a curve over $k$. 
(Note that in this article, a \textit{curve} over $k$ is possibly non-proper, non-reduced, or reducible.)
Let $\Omega^1_{C/k}$ be the sheaf of K\"ahler differentials on $C$
and let $\Fitt^1_{\O_C}(\Omega^1_{C/k}) \subset \O_C$ be
the first Fitting ideal of $\Omega^1_{C/k}$.
(For the definition and basic properties of Fitting ideals,
we refer to \cite[Section 16.29]{GoertzWedhorn}.)

\begin{defn}
\label{Definition:JacobianNumber}
For a closed point $x \in C$,
the \textit{Jacobian number} of $C$ at $x$ is defined by
$\jac(C,x) := \dim_k \left(\O_C/\Fitt^1_{\O_C}(\Omega^1_{C/k})\right)_x$.
\end{defn}

\begin{rem}
There are several definitions of Jacobian numbers in the literature.
In this article, we adopt Schr\"oer's definition in terms of Fitting ideals; see
\cite[Section 3, p.~64]{Schroeer}.
The advantage of this definition is that it makes sense for all curves
and that it behaves well scheme-theoretically.
For plane curves, it coincides with the more traditional definition
of Jacobian numbers (as in \cite{BuchweitzGreuel}, \cite{GreuelLossenShustin}, or \cite{Tjurina})
in terms of the dimension of $\Ext^1$ of sheaves,
or in terms of the ideal generated by partial derivatives of the defining equation; see
Proposition \ref{Proposition:lci} and
Corollary \ref{Corollary:JacobianNumberDerivative} for details.
\end{rem}

\begin{prop}
\label{Proposition:JacobianNumberZeroSmoothness}
Let $C$ be a curve over $k$.
Then, for a closed point $x \in C$,
we have $\jac(C,x) = 0$ if and only if $C$ is smooth at $x$.
\end{prop}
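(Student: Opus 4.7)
The strategy is to translate the vanishing of $\jac(C,x)$ into the statement that $\Omega^1_{C/k,x}$ is a free $\mathcal{O}_{C,x}$-module of rank one, and then invoke the standard differential criterion for smoothness of a one-dimensional variety.

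By definition, $\jac(C,x)=0$ is equivalent to $\Fitt^1(\Omega^1_{C/k})_x=\mathcal{O}_{C,x}$; and by a standard property of Fitting ideals (see, e.g., \cite[Section 16.29]{GoertzWedhorn}), this is in turn equivalent to the $\mathcal{O}_{C,x}$-module $\Omega^1_{C/k,x}$ being generated by at most one element. On the smoothness side, since $C$ has pure dimension one over $k$, the Jacobian criterion says that $C$ is smooth at $x$ if and only if $\Omega^1_{C/k,x}$ is free of rank one as an $\mathcal{O}_{C,x}$-module. The \emph{if} direction of the proposition is then immediate, since a free module of rank one is certainly cyclic.

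For the \emph{only if} direction, assume $\Omega^1_{C/k,x}$ is cyclic; I will show it must actually be free of rank one. First I would localize at any generic point $\eta$ of an irreducible component of $C$ through $x$: the module $\Omega^1_{C/k,\eta}=\Omega^1_{k(\eta)/k}$ is then cyclic over the field $k(\eta)$, so it has dimension at most one; combined with the transcendence-degree lower bound $\dim_{k(\eta)}\Omega^1_{k(\eta)/k}\ge \mathrm{trdeg}_k k(\eta)=1$, equality must hold and $k(\eta)/k$ must be separably generated. Second, I would argue that cyclicity of $\Omega^1_{C/k,x}$ forces $\mathcal{O}_{C,x}$ to be a one-dimensional Noetherian local domain: additional analytic branches through $x$, as well as any nilpotent thickening along the generic direction, each contribute an extra independent generator to $\Omega^1_{C/k,x}\otimes \kappa(x)$, in contradiction with cyclicity. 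Once $\mathcal{O}_{C,x}$ is known to be a local domain, the kernel of the surjection $\mathcal{O}_{C,x}\twoheadrightarrow \Omega^1_{C/k,x}$ vanishes already after localization at $\eta$, and hence vanishes outright because $\mathcal{O}_{C,x}\hookrightarrow k(\eta)$. We conclude that $\Omega^1_{C/k,x}$ is free of rank one, i.e., $C$ is smooth at $x$.

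The main obstacle is the second step in the \emph{only if} direction, namely ruling out multiple minimal primes and nilpotents of $\mathcal{O}_{C,x}$ starting from the mere cyclicity of $\Omega^1_{C/k,x}$. The cleanest approach I foresee is to pass to the completion $\widehat{\mathcal{O}}_{C,x}$, decompose it via its minimal primes (which correspond to analytic branches), and use the conormal sequence $\mathfrak{m}_x/\mathfrak{m}_x^2\to\Omega^1_{C/k,x}\otimes\kappa(x)\to\Omega^1_{\kappa(x)/k}\to 0$ to count generators of $\Omega^1_{C/k,x}$ modulo $\mathfrak{m}_x$ and see that each branch or thickening contributes an independent direction, thereby reducing the claim to the clean local-domain case handled above.
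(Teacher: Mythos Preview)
Your approach can be made to work but is more laborious than needed, and the step you correctly flag as the ``main obstacle'' remains only a sketch. The paper's proof simply cites the Fitting-ideal characterization (G\"ortz--Wedhorn) together with the differential criterion for smoothness (Liu, Ch.~6, Prop.~2.2). The key fact you are working around by hand is this: for a scheme of pure dimension $d$ over a field, one always has $\dim_{\kappa(x)}\bigl(\Omega^1_{C/k,x}\otimes\kappa(x)\bigr)\geq d$, with equality precisely at smooth points. (After base change to $\overline{k}$ so that $\kappa(x)=k$, this is the isomorphism $\mathfrak{m}_x/\mathfrak{m}_x^2\cong\Omega^1\otimes k$ together with ``embedding dimension $\geq$ Krull dimension''.) Granting this, the argument is one line: $\jac(C,x)=0$ $\Leftrightarrow$ $\Fitt^1=\mathcal{O}_{C,x}$ $\Leftrightarrow$ $\Omega^1$ cyclic $\Leftrightarrow$ fiber dimension $\leq 1$, hence $=1$, hence smooth.

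You instead adopt the formulation ``smooth $\Leftrightarrow$ $\Omega^1$ free of rank one'' and then try to promote ``cyclic'' to ``free of rank one'' directly, which forces the detour through ``$\mathcal{O}_{C,x}$ is a domain''. Two concrete issues with the write-up: your Step~1 asserts $\Omega^1_{C/k,\eta}=\Omega^1_{k(\eta)/k}$, which already assumes $\mathcal{O}_{C,\eta}$ is a field, i.e.\ the generic reducedness that only Step~2 is supposed to supply---so the steps are out of order. And when you carry out your conormal-sequence plan for Step~2 (after reducing to $\kappa(x)=k$), what you will actually prove is exactly the lower bound $\dim_k(\Omega^1\otimes k)=\dim_k\mathfrak{m}_x/\mathfrak{m}_x^2\geq 1$; at that point it is cleaner to invoke the fiber-dimension form of the smoothness criterion directly and skip the intermediate claim about $\mathcal{O}_{C,x}$ being a domain altogether.
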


\begin{proof}
We have $\jac(C,x) = 0$ if and only if  $\Omega^1_{C/k}$ is locally free of rank $1$ 
in an open neighborhood of $x$; see \cite[Remark 16.30]{GoertzWedhorn}.
Hence, by \cite[Chapter 6, Proposition 2.2]{Liu:Book},
we have $\jac(C,x) = 0$ if and only if $C$ is smooth at $x$.
\end{proof}

The closed subscheme of $C$ defined by
$\Fitt^1_{\O_C}(\Omega^1_{C/k})$ is called the \textit{Jacobian subscheme}
and by the previous proposition, its support coincides with the non-smooth locus of 
$C$ over $k$.
We say a curve $C$ over $k$ is \textit{geometrically reduced}
if $C \otimes_k k^{\alg}$ is reduced.
If $C$ is geometrically reduced, then the smooth locus of $C$ over $k$ 
is open and dense. 
It follows that we have $\jac(C,x) = 0$ for all but finitely 
many closed points $x \in C$ and that we have $\jac(C,x) < \infty$ 
for every closed point $x \in C$.

\subsection{Regular curves over imperfect fields with small Jacobian numbers}

\begin{prop}
\label{Proposition:SmallJacobianNumberSmoothness}
Let $k$ be a field of characteristic $p>0$ that is not necessarily perfect.
Let $C$ be a curve over $k$.
Assume that the Jacobian numbers of $C$ are strictly less than $p$ at every closed point of $C$,
and $C$ is a regular scheme.
Then, $C$ is smooth over $k$.
\end{prop}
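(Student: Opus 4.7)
The plan is to reduce the problem to a single closed point $x\in C$ and then apply Schr\"oer's $p$-divisibility theorem for Jacobian numbers of regular curves over imperfect fields. The dichotomy we exploit is that at a closed point of a regular curve, being non-smooth over $k$ forces the Jacobian number to be a positive multiple of $p$, which is incompatible with the strict bound $\jac(C,x)<p$.

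More precisely, I would argue as follows. Fix an arbitrary closed point $x\in C$ and consider two cases. If $C$ is smooth at $x$, then there is nothing to prove at $x$. Otherwise, $C$ is regular but not smooth at $x$. In this situation, Schr\"oer's result (\cite[Section~3]{Schroeer}) asserts that the length of the Jacobian subscheme at such a point is divisible by $p$; equivalently, $p \mid \jac(C,x)$. Since the non-smoothness together with Proposition~\ref{Proposition:JacobianNumberZeroSmoothness} gives $\jac(C,x)\neq 0$, we would conclude $\jac(C,x)\geq p$, contradicting assumption~(1).

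Therefore every closed point $x\in C$ is a smooth point of $C$ over $k$. Smoothness is an open condition and smoothness at every closed point of a scheme of finite type over $k$ implies smoothness everywhere (the non-smooth locus is closed, and if it were nonempty, it would contain a closed point). This yields that $C$ is smooth over $k$, as desired.

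The only real content in this argument is the invocation of Schr\"oer's $p$-divisibility theorem; everything else is formal. Accordingly, the main (and only) obstacle is to have the precise statement of that theorem available in the exact form ``$C$ regular at $x$ and not smooth at $x$ implies $p\mid \jac(C,x)$'' with $\jac$ defined via the first Fitting ideal of $\Omega^1_{C/k}$. Once this is in hand, the proposition follows immediately.
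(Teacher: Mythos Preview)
Your proposal is correct and matches the paper's own proof essentially line for line: both argue by contradiction at a non-smooth closed point, invoke Schr\"oer's $p$-divisibility for the Jacobian number there, and combine this with Proposition~\ref{Proposition:JacobianNumberZeroSmoothness} to reach a contradiction with $\jac(C,x)<p$. The only difference is expository: the paper unpacks the citation slightly by noting that regularity plus non-smoothness forces $k(x)/k$ to be inseparable, whence $p\mid [k(x):k]$ and therefore $p\mid \jac(C,x)$, whereas you cite Schr\"oer's $p$-divisibility as a black box.
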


\begin{proof}
Seeking a contradiction, assume that there exists a closed point $x \in C$ such that
$C \to \Spec k$ is not smooth around $x$.
Since $C$ was assumed to be regular, it follows that the residue field extension 
$\kappa(x)/k$ is not separable; see \cite[Proposition 3.2]{Schroeer}.
In particular, $[\kappa(x) : k]$ is divisible by $p$.
Hence the dimension of the stalk $(\O_C/\Fitt^1_{\O_C}(\Omega^1_{C/k}))_x$ as a $k$-vector space is divisible by $p$.
(See also \cite[Lemma 3.3 and Proposition 3.6]{Schroeer}.)
On the other hand, since $\jac(C,x) < p$, we have $\jac(C,x) = 0$.
Since $C$ is not smooth around $x$, this contradicts 
Proposition \ref{Proposition:JacobianNumberZeroSmoothness}.
\end{proof}

\subsection{Closed subschemes and finite base change}

\begin{prop}
\label{Proposition:JacobianNumberClosedSubscheme}
Let $C$ and $C'$ be curves over a field $k$ together with a closed immersion 
$i \colon C' \hookrightarrow C$.
For every closed point $x \in C'$, we have
$\jac(C',x) \leq \jac(C,x).$
\end{prop}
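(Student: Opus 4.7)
The plan is to compare Fitting ideals via the conormal sequence attached to the closed immersion $i \colon C' \hookrightarrow C$. Let $\mathcal{I} \subset \O_C$ be the ideal sheaf of $C'$ in $C$. The right-exact conormal sequence
\[
\mathcal{I}/\mathcal{I}^2 \,\longrightarrow\, i^{\ast}\Omega^1_{C/k} \,\longrightarrow\, \Omega^1_{C'/k} \,\longrightarrow\, 0
\]
provides a surjection $i^{\ast}\Omega^1_{C/k} \twoheadrightarrow \Omega^1_{C'/k}$ of coherent sheaves on $C'$. The strategy is to translate this surjection into an inclusion of Fitting ideals and then compare colengths.

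First, I would invoke the base-change formula for Fitting ideals (see \cite[Section 16.29]{GoertzWedhorn}): writing $R := \O_{C,x}$, $R' := \O_{C',x} = R/I_x$, and $M := \Omega^1_{C/k,x}$, one has
\[
\Fitt^1_{R'}(M \otimes_R R') \,=\, \Fitt^1_{R}(M) \cdot R'.
\]
Next, I would use the standard fact that a surjection of finitely presented modules $M \otimes_R R' \twoheadrightarrow \Omega^1_{C'/k,x}$ yields an inclusion of Fitting ideals in the direction
\[
\Fitt^1_{R'}(M \otimes_R R') \,\subset\, \Fitt^1_{R'}(\Omega^1_{C'/k,x}),
\]
since any presentation of $M \otimes_R R'$ can be extended to one of the quotient by adjoining columns, which only enlarges the ideal of minors.

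Combining the two displays gives
\[
\Fitt^1_R(M)\cdot R' \,\subset\, \Fitt^1_{R'}(\Omega^1_{C'/k,x}),
\]
and therefore
\[
\jac(C',x) \,=\, \dim_k R'/\Fitt^1_{R'}(\Omega^1_{C'/k,x}) \,\leq\, \dim_k R'/\bigl(\Fitt^1_R(M)\cdot R'\bigr) \,=\, \dim_k R/\bigl(I_x+\Fitt^1_R(M)\bigr) \,\leq\, \dim_k R/\Fitt^1_R(M) \,=\, \jac(C,x),
\]
which is the desired inequality.

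There is no real obstacle here, only two small verifications to make carefully: the direction of the Fitting-ideal inclusion under a quotient (which, being opposite to the inclusion obtained from a submodule, is the point where one can easily slip a sign), and the fact that the conormal sequence really does produce a surjection onto $\Omega^1_{C'/k}$ at the level of stalks. Both are standard, so the argument reduces to the short display above.
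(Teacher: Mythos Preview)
Your argument is correct and is essentially the same as the paper's: both use the surjection $i^{\ast}\Omega^1_{C/k}\twoheadrightarrow\Omega^1_{C'/k}$ together with base change of Fitting ideals to obtain $i^{\ast}\Fitt^1_{\O_C}(\Omega^1_{C/k})\subset\Fitt^1_{\O_{C'}}(\Omega^1_{C'/k})$. You have merely spelled out the final colength comparison $\dim_k R'/(\Fitt^1_R(M)\cdot R')\leq\dim_k R/\Fitt^1_R(M)$ more explicitly than the paper does.
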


\begin{proof}
Since $i$ is a closed immersion,
we have a natural surjection $i^{\ast} \Omega^1_{C/k} \to \Omega^1_{C'/k}$;
see \cite[Chapter 6, Proposition 1.24(d)]{Liu:Book}.
Hence, we have
\[
i^{\ast} \Fitt^1_{\O_C}(\Omega^1_{C/k})
\,=\, \Fitt^1_{\O_{C'}}(i^{\ast} \Omega^1_{C/k})
\,\subset\, \Fitt^1_{\O_{C'}}(\Omega^1_{C'/k}),
\]
from which the assertion follows.
\end{proof}

\begin{prop}
\label{Proposition:JacobianNumberEtaleCovering}
Let $C$ be a curve over a field $k$ and let $k'/k$ be a field extension.
We set $C_{k'} := C \otimes_k k'$ and denote by $p \colon C_{k'} \to C$
the natural morphism.
For every closed point $x \in C$, we have
$\jac(C,x) = \sum_{y \in p^{-1}(x)} \jac(C_{k'},y)$.
\end{prop}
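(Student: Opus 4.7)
The main idea is to exploit two base-change compatibilities. First, K\"ahler differentials commute with base change, giving $\Omega^1_{C_{k'}/k'} \cong p^{\ast}\Omega^1_{C/k}$. Second, Fitting ideals commute with arbitrary base change: for a finitely generated module $M$ over a ring $R$ and an $R$-algebra $S$, one has $\Fitt^j(M \otimes_R S) = \Fitt^j(M)\cdot S$. Combining these shows that the Jacobian subscheme $J_{k'} \subset C_{k'}$ defined by $\Fitt^1_{\O_{C_{k'}}}(\Omega^1_{C_{k'}/k'})$ agrees with the scheme-theoretic pullback $J \times_k k'$ of the Jacobian subscheme $J \subset C$. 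I would state these two compatibilities at the outset.

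Because $C$ is geometrically reduced, its smooth locus is open and dense (as recalled in the paragraph after Proposition \ref{Proposition:JacobianNumberZeroSmoothness}), so $J$ is a finite $k$-scheme concentrated at the non-smooth points. At the closed point $x$, its stalk
\[
A \,:=\, (\O_J)_x \,=\, \bigl(\O_C/\Fitt^1_{\O_C}(\Omega^1_{C/k})\bigr)_x
\]
is a local Artinian $k$-algebra of $k$-dimension $\jac(C,x)$, with residue field $k(x)$. Set-theoretically, the fiber $p^{-1}(x)$ is identified with the closed points of $\Spec(k(x) \otimes_k k')$, which in turn coincide with the maximal ideals of the finite (hence Artinian) $k'$-algebra $A \otimes_k k'$: the surjection $A \twoheadrightarrow k(x)$, after $\otimes_k k'$, has nilpotent kernel $\idealm \otimes_k k'$, so it induces a homeomorphism of spectra.

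The heart of the argument is then the canonical decomposition of any Artinian ring as a finite product of its localizations at its maximal ideals, applied to $A \otimes_k k'$:
\[
A \otimes_k k' \;\cong\; \prod_{y \in p^{-1}(x)} (\O_{J_{k'}})_y,
\]
where the identification of each local factor with the stalk of $\O_{J_{k'}}$ at $y$ comes from the base-change formula $J_{k'} = J \times_k k'$ established in the first step. Taking $k'$-dimensions and using the elementary identity $\dim_{k'}(A \otimes_k k') = \dim_k A$ yields
\[
\jac(C,x) \;=\; \dim_k A \;=\; \sum_{y \in p^{-1}(x)} \dim_{k'}(\O_{J_{k'}})_y \;=\; \sum_{y \in p^{-1}(x)} \jac(C_{k'},y),
\]
which is the desired identity.

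The only real subtlety to verify carefully is the base-change compatibility of Fitting ideals (the behavior of K\"ahler differentials is standard); beyond that, no separability or finiteness hypothesis on $k'/k$ enters, and the Artinian decomposition step is automatic, which is why the statement holds for an \emph{arbitrary} field extension.
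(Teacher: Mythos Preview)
Your proof is correct and rests on the same two ingredients as the paper's: the base-change compatibility $p^{\ast}\Omega^1_{C/k}\cong\Omega^1_{C_{k'}/k'}$ and the base-change formula for Fitting ideals. The paper's proof differs only cosmetically: rather than decomposing the Artinian ring $A\otimes_k k'$, it first shrinks $C$ so that $C\setminus\{x\}$ is smooth, making the Jacobian subscheme globally supported at $x$, and then compares global sections of $\O_C/\Fitt^1$ and $\O_{C_{k'}}/\Fitt^1$ directly; your local Artinian decomposition achieves the same thing without the preliminary reduction.
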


\begin{proof}
Since $p^{\ast} \Omega^1_{C/k} = \Omega^1_{C_{k'}/k'}$,
we have
\[ p^{\ast} \Fitt^1_{\O_C}(\Omega^1_{C/k}) = \Fitt^1_{\O_{C_{k'}}}(\Omega^1_{C_{k'}/k'}); \]
see \cite[Proposition 16.29 (3)]{GoertzWedhorn}.
The assertion easily follows from this equality.
\end{proof}

\subsection{Local complete intersection curves}

In this subsection, we study Jacobian numbers of
curves that are \textit{local complete intersections};
for the definition of this notion, see \cite[Chapter 6, Definition 3.17]{Liu:Book}. 
For a local complete intersection curve $C$,
we can calculate Jacobian numbers using dualizing sheaves:
we have a canonical map
$c_{C/k} \colon \Omega^1_{C/k} \to \omega_{C/k},$
called the \textit{class map},
from the sheaf of K\"ahler differentials $\Omega^1_{C/k}$
to the dualizing sheaf $\omega_{C/k}$;
see \cite[Chapter 6, Corollary 4.13]{Liu:Book}.

\begin{prop}
\label{Proposition:lci}
Let $C$ be a local complete intersection curve over $k$.
For a closed point $x \in C$, we have
$ \jac(C,x) = \dim_k \Coker (c_{C/k})_x$.
Moreover, if $C$ is geometrically reduced,
then we have 
\[ \jac(C,x) \,=\, \dim_k \Coker (c_{C/k})_x \,=\, \dim_k \Ext^1_{\O_{C,x}}(\Omega^1_{C/k,x}, \O_{C,x}). \]
\end{prop}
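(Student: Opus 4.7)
The plan is to work locally at $x$ and reduce everything to an explicit computation with the Jacobian matrix of a local LCI presentation. Using the LCI hypothesis, after shrinking to an open neighborhood of $x$, we embed $C$ as a closed subscheme of a smooth $k$-scheme $Y$ of dimension $n$, cut out by a regular sequence $f_1,\ldots,f_{n-1} \in \O_Y$. From the conormal presentation
\[
(f)/(f)^2 \xrightarrow{\bar{f}_i \mapsto df_i|_C} \Omega^1_{Y/k}|_C \to \Omega^1_{C/k} \to 0,
\]
with $(f)/(f)^2$ locally free of rank $n-1$ on the classes $\bar{f}_i$, the first Fitting ideal $\Fitt^1(\Omega^1_{C/k})$ is generated by the $(n{-}1)\times(n{-}1)$ minors of the Jacobian matrix $J = (\partial f_i/\partial x_j)$. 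By adjunction, $\omega_{C/k} \cong \omega_{Y/k}|_C \otimes \det((f)/(f)^2)^{-1}$ is invertible, locally trivialized by a Poincar\'e residue, and under this trivialization the class map is given by
\[
c_{C/k}(dg) \,=\, [d\tilde{g} \wedge df_1 \wedge \cdots \wedge df_{n-1}] \otimes (\bar{f}_1 \wedge \cdots \wedge \bar{f}_{n-1})^{-1}
\]
for any lift $\tilde{g} \in \O_Y$ of $g \in \O_C$. Taking $g = x_j$ gives, up to sign, the $(n{-}1)\times(n{-}1)$ minor of $J$ obtained by deleting the $j$-th column, so the image of $c_{C/k}$ corresponds under the trivialization of $\omega_{C/k}$ to the ideal generated by these minors, i.e., to $\Fitt^1(\Omega^1_{C/k})$. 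This yields $\Coker(c_{C/k})_x \cong (\O_C/\Fitt^1(\Omega^1_{C/k}))_x$ and the first equality.

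For the second equality, assume $C$ is geometrically reduced; then $C$ is generically smooth, $c_{C/k}$ is generically an isomorphism, and $\Coker(c_{C/k})_x$ has finite length. Since LCI implies Gorenstein, $R := \O_{C,x}$ is a $1$-dimensional Gorenstein local ring and $\omega_R := \omega_{C/k,x}$ is free of rank $1$; fix an isomorphism $\omega_R \cong R$ so that $c_{C/k,x}$ becomes a map $\varphi : \Omega^1_{C/k,x} \to R$ with image $I := \Fitt^1(\Omega^1_{C/k,x})$ and cokernel $R/I$. Let $T := \Ker(\varphi)$, which coincides with the torsion submodule of $\Omega^1_{C/k,x}$. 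Applying $\Hom_R(-, R)$ to the short exact sequences $0 \to T \to \Omega^1_{C/k,x} \to I \to 0$ and $0 \to I \to R \to R/I \to 0$, and using $\Hom_R(T,R) = 0$ (depth $1$), $\Ext^1_R(R, R) = 0$, and $\Ext^{\geq 2}_R(-, R) = 0$ (Gorenstein of injective dimension $1$), one obtains $\Ext^1_R(\Omega^1_{C/k,x}, R) \cong \Ext^1_R(T, R)$. Grothendieck--Matlis local duality for $R$ identifies $\Ext^1_R(T, R)$ with the Matlis dual of $T$, so $\dim_k \Ext^1_R(\Omega^1_{C/k,x}, R) = \dim_k T$. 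The desired equality then reduces to $\dim_k T = \dim_k R/I$, which follows from the four-term exact sequence $0 \to T \to \Omega^1_{C/k,x} \to R \to R/I \to 0$ together with the fact that the middle two modules both have generic rank $1$.

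The main obstacle is this last step, the equality $\dim_k T = \dim_k R/I$. While intuitively clear, it requires care because $\Omega^1_{C/k,x}$ is of infinite length; the argument proceeds via the rank-$1$ fractional-ideal structure of $\Omega^1_{C/k,x}/T \cong I$ inside $R$, balancing the lengths of the torsion flanks in the two three-term exact sequences. An alternative (and perhaps cleaner) route is to invoke Grothendieck--Serre duality for the LCI morphism $C \to \Spec k$ directly: the class map $c_{C/k}$ is part of the natural comparison between $\Omega^1_{C/k}$ and the dualizing sheaf, and on the finite support of $\Coker(c_{C/k})$ its cone identifies with the shifted derived hom $\mathrm{R}\Hom_{\O_C}(\Omega^1_{C/k}, \O_C)[1]$, giving the second equality at the level of $\O_C$-modules.
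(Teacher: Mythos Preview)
Your argument for the first equality is correct and essentially the same as the paper's: both identify the image of the class map, under a trivialization of $\omega_{C/k}$, with the ideal of maximal minors of the Jacobian matrix of an LCI presentation, which is $\Fitt^1(\Omega^1_{C/k})$.

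For the second equality, your reduction is also the same as the paper's: both show, via the two short exact sequences together with $\Ext^{\geq 2}_R(-,R)=0$ and local duality on the Gorenstein ring $R=\O_{C,x}$, that
\[
\dim_k \Ext^1_R(\Omega^1_{C/k,x},R) \,=\, \dim_k T,
\]
so that everything comes down to $\dim_k T = \dim_k R/I$ with $I=\Fitt^1(\Omega^1_{C/k,x})$.

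The gap is precisely at this last step. Your justification---the four-term exact sequence $0\to T\to\Omega^1_{C/k,x}\to R\to R/I\to 0$ and the fact that the two middle terms have generic rank~$1$---is not sufficient. Over a DVR $R$ with uniformizer $t$, take $M=R$ and $\varphi\colon M\to R$ multiplication by $t^2$: then $T=0$ and $R/I=R/(t^2)$ has length $2$, yet the middle terms have rank~$1$. Thus the rank condition alone cannot force $\dim_k T=\dim_k R/I$; one must use that the map $\varphi$ is the \emph{specific} ``Cramer's rule'' map whose image is exactly $\Fitt^1(\Omega^1_{C/k,x})$, i.e., the map given by the signed maximal minors of the presenting matrix $J$. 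The equality $\length_R(\Coker J)=\length_R(R/I_n(J))$ for an $n\times(n{+}1)$ matrix $J$ whose maximal minors have grade~$\geq 1$ is exactly the content of Rim's theorem \cite[Theorem~1.2(ii)]{Rim}, and the paper invokes this result (proved via the generalized Koszul complexes of Buchsbaum--Rim). Your ``balancing'' sketch does not supply this argument.

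Your proposed alternative route via derived duality is also not correct as stated: the cone of $c_{C/k}$ has $H^{-1}=T$, whereas $\mathrm{R}\Hom_{\O_C}(\Omega^1_{C/k},\O_C)[1]$ has $H^{-1}=\Hom_{\O_C}(\Omega^1_{C/k},\O_C)$, which is the tangent sheaf and is torsion-free; these cannot agree, even after restricting to the singular locus. A correct derived formulation would again unwind to the Buchsbaum--Rim identity above.
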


\begin{proof}
The first equality follows from an explicit description of the class map $c_{C/k}$
as in \cite[Chapter 6, Section 6.4.2]{Liu:Book}.
Let us briefly recall it.
Since $C$ is a local complete intersection over $k$,
there exists an affine open neighborhood $U$ of $x \in C$
such that
$
U \cong \Spec A
$
, where
$A = k[T_1,\ldots,T_{n+1}]/I
$
for an ideal $I = (F_1,\ldots,F_n)$ of $k[T_1,\ldots,T_{n+1}]$.
The dualizing module $\omega_{A/k} := \Gamma(U,\omega_{U/k})$ is
given by
\[ 
\det (I/I^2)^{\vee} \otimes_A \left( \det (\Omega^1_{k[T_1,\ldots,T_{n+1}]/k}) \otimes_{k[T_1,\ldots,T_{n+1}]} A\right). 
\]
It is a free $A$-module of rank one
with basis
\[
e \,:=\, (\overline{F}_1 \wedge \cdots \wedge \overline{F}_n)^{\vee}
     \otimes \left((d T_1 \wedge \cdots \wedge d T_{n+1}) \otimes 1_A\right),
\]
where $\overline{F}_i$ is the image of $F_i$ in $I/I^2$
and where $1_A \in A$ is the identity; see \cite[Chapter 6, Lemma 4.12]{Liu:Book}.
With respect to this basis, the class map $c_{U/k}$ is given by
\[
c_{U/k} \colon \Omega^1_{A/k} \to \omega_{A/k}, \quad
dt_i \mapsto \Delta_i\cdot e. \]
Here, $dt_i \in \Omega^1_{A/k}$ is the image of $T_i$ in $\Omega^1_{A/k}$
under the universal derivation and $\Delta _i \in A$ denotes 
the determinant of the Jacobian matrix $(\partial F_i/\partial T_j)_{i,j}$ with 
$i$.th column removed.
Therefore, under the isomorphism $A \cong \omega_{A/k}$ that sends $1_A$ to $e$,
the ideal of $A$ corresponding to the image of the class map $c_{U/k}$
is equal to the first Fitting ideal $\Fitt^1_A(\Omega^1_{A/k})$;
see \cite[Section 16.9]{GoertzWedhorn}.
This establishes the first equality.

The second equality was essentially proved by Rim in \cite{Rim}.
However, there it is somewhat implicit in the proofs of the main theorems of \cite{Rim},
as well as under the additional assumption that $k$ is 
perfect.
Let us briefly explain how to deduce the second equality from
the results in \cite{Rim}:
since the statement is local, we may use the same setup and notation as before.
Shrinking $U$ if necessary, we may assume $U \backslash \{ x \}$ is smooth over $k$.
Let $(\Omega^1_{A/k})_{\rm{tors}}$ be the torsion submodule of
$\Omega^1_{A/k}$, i.e.,
\[
(\Omega^1_{A/k})_{\rm{tors}} \,:=\, 
\left\{ \, m \in \Omega^1_{A/k} \, \vert \, {\rm{there\, is \, a\, regular\, element}}\, a \in A\, {\rm{such\, that }}\, am=0 \, \right\}.
\]
Rim proved the equality
\[ \length_A (\Omega^1_{A/k})_{\rm{tors}} = \length_A (\Coker c_{U/k}) \]
using the generalized Koszul complexes of Buchsbaum and Rim;
see \cite[Theorem 1.2 (ii)]{Rim} and \cite[Corollary 1.3 (ii)]{Rim}.
(In fact, \cite[Theorem 1.2 (ii)]{Rim} is a general result in commutative algebra,
which is valid for Cohen-Macaulay algebras.
The perfectness of the base field was not used there.)
Let $\idealm_x \subset A$ be the maximal ideal corresponding to $x \in C$
and let $A_x$ be the localization of $A$.
Since $U \backslash \{ x \}$ is smooth over $k$,
we have 
$(\Omega^1_{A/k})_{\rm{tors}} = (\Omega^1_{A_x/k})_{\rm{tors}}$,
which is an $A_x$-module of finite length.
Since $A_x$ is a one-dimensional Gorenstein local ring,
Grothendieck's local duality gives an isomorphism
\[ 
\Ext^1_{A_x}\left(\Omega^1_{A_x/k}, \omega_{A_x/k}\right) \,\cong\, 
\Hom_{A_x}\left(H^0_{\idealm_x}(\Omega^1_{A_x/k}), E(A_x/\idealm_x)\right), 
\]
where $E(A_x/\idealm_x)$ is an injective hull of the residue field $A_x/\idealm_x$; 
see \cite[Theorem 3.5.8]{BrunsHerzog}.
Since $A_x$ is a Cohen-Macaulay ring and $(\Omega^1_{A_x/k})_{\rm{tors}}$ 
is a module of finite length, we have
\[ H^0_{\idealm_x}(\Omega^1_{A_x/k}) = (\Omega^1_{A_x/k})_{\rm{tors}}. \]
The right hand side of the above isomorphism is identified with
the Matlis dual of $(\Omega^1_{A_x/k})_{\rm{tors}}$.
Since Matlis duality preserves the length of Artinian modules
(see \cite[Proposition 3.2.12]{BrunsHerzog}), we have
\[ \length_{A_x} \Ext^1_{A_x}(\Omega^1_{A_x/k}, \omega_{A_x/k}) 
= \length_{A_x} (\Omega^1_{A_x/k})_{\rm{tors}}. \]
Since $\omega_{A_x/k}$ is a free $A_x$-module of rank one,
the second equality of this proposition follows from above results.
\end{proof}

\begin{rem}
Proposition \ref{Proposition:lci} is well-known if $C$ is a plane curve over 
the complex numbers; see,
for example \cite[Lemma 1.1.2, Corollary 6.1.6]{BuchweitzGreuel} or
\cite[Chapter II, p.~317, the proof of Lemma 2.32]{GreuelLossenShustin}.
The second equality in Proposition \ref{Proposition:lci}
was attributed to Rim in \cite[Proposition 2.2]{EstevesKleiman}.
We refer to the proof of \cite[Proposition 2.2]{EstevesKleiman}
for a historical account.
\end{rem}


\begin{prop}
\label{Proposition:JacobianNumberFiniteMorphism}
Let $C$ and $C'$ be two reduced local complete intersection curves over $k$.
Let $f \colon C' \to C$ be a finite morphism over $k$ and assume that there
exists an open dense subset $U \subset C$ such that the restriction
$f|_{f^{-1}(U)} \colon f^{-1}(U) \to U$
is an isomorphism.
Let $g$ be the composition
$\Omega^1_{C/k} \to
f_{\ast} \Omega^1_{C'/k} \to
f_{\ast} \big( \Omega^1_{C'/k}/\Ker(c_{C'/k}) \big)$.
If $x \in C$ is a closed point, then 
\[
\jac(C,x) 
\,=\,
\dim_k \Coker(g)_x
\,+\, \dim_k \left((f_{\ast} \O_{C'})/\O_C\right)_x
\,+\, \sum_{y \in f^{-1}(x)} \jac(C',y).
\]
In particular, for every closed point $y \in C'$ lying above $x$,
we have $\jac(C',y) \,\leq\, \jac(C,x)$.
\end{prop}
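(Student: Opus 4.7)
The strategy is to rewrite both sides as lengths of coherent sheaves supported at $x$ and compare them via a short filtration of $\omega_{C/k}$. By the first equality of Proposition \ref{Proposition:lci}, we have $\jac(C,x) = \dim_k \Coker(c_{C/k})_x$ and $\jac(C',y) = \dim_k \Coker(c_{C'/k})_y$ at any closed point. The plan is to exhibit three $\O_C$-subsheaves of $\omega_{C/k}$ whose successive quotients realize the three summands $\Coker(g)$, $f_\ast\Coker(c_{C'/k})$, and $f_\ast\O_{C'}/\O_C$.

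The key ingredient is Grothendieck duality for the finite birational morphism $f$. Since $C$ is Gorenstein (LCI implies Gorenstein), duality yields $f_\ast\omega_{C'/k} \cong \Homsheaf_{\O_C}(f_\ast\O_{C'},\omega_{C/k})$, and evaluation at $1$ produces an injection $\iota\colon f_\ast\omega_{C'/k} \hookrightarrow \omega_{C/k}$. Applying $\Homsheaf(-,\omega_{C/k})$ to the short exact sequence $0 \to \O_C \to f_\ast\O_{C'} \to Q \to 0$ (with $Q := f_\ast\O_{C'}/\O_C$ zero-dimensional by birationality), and using Cohen-Macaulayness of $f_\ast\O_{C'}$ to kill higher $\Extsheaf$, one obtains a short exact sequence
\[
0 \to f_\ast\omega_{C'/k} \xrightarrow{\iota} \omega_{C/k} \to \Extsheaf^1_{\O_C}(Q,\omega_{C/k}) \to 0,
\]
from which local duality gives $\dim_k(\omega_{C/k}/f_\ast\omega_{C'/k})_x = \dim_k Q_x = \dim_k(f_\ast\O_{C'}/\O_C)_x$. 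One also needs the compatibility $c_{C/k} = \iota \circ f_\ast(c_{C'/k}) \circ \alpha$, where $\alpha\colon \Omega^1_{C/k} \to f_\ast\Omega^1_{C'/k}$ is the natural map; this reduces to checking agreement on the common open locus where $f$ is an isomorphism and the class maps are themselves isomorphisms, combined with torsion-freeness of $\omega_{C/k}$.

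With this set-up, one factors $f_\ast(c_{C'/k})$ as $f_\ast\Omega^1_{C'/k} \twoheadrightarrow f_\ast(\Omega^1_{C'/k}/\Ker c_{C'/k}) \hookrightarrow f_\ast\omega_{C'/k}$ (whose cokernel is $f_\ast\Coker(c_{C'/k})$ by exactness of $f_\ast$ for finite $f$) and identifies $g$ with the composite of $\alpha$ followed by the first of these arrows. One then obtains inside $\omega_{C/k}$ the four-term filtration
\[
\Im(c_{C/k}) \;\subseteq\; \iota\bigl(f_\ast(\Omega^1_{C'/k}/\Ker c_{C'/k})\bigr) \;\subseteq\; \iota(f_\ast\omega_{C'/k}) \;\subseteq\; \omega_{C/k}.
\]
The three successive quotients are $\Coker(g)$, $f_\ast\Coker(c_{C'/k})$, and $\omega_{C/k}/\iota(f_\ast\omega_{C'/k})$, with stalk lengths at $x$ equal, respectively, to $\dim_k\Coker(g)_x$, $\sum_{y \in f^{-1}(x)}\jac(C',y)$ (since $f_\ast$ preserves length over the points lying above $x$), and $\dim_k(f_\ast\O_{C'}/\O_C)_x$. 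Adding the three lengths yields the asserted formula, and the inequality $\jac(C',y) \leq \jac(C,x)$ is then immediate from the non-negativity of each summand. The main obstacle is carrying out the Grothendieck duality step cleanly over a possibly imperfect base field $k$ for a finite birational morphism between reduced LCI curves; a secondary but delicate point is verifying the trace-compatibility of the class maps on the nose, rather than merely up to a unit, so that the filtration above indeed refines $\Im(c_{C/k})$.
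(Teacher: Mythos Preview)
Your proposal is correct and follows essentially the same route as the paper: both factor $c_{C/k}$ through $f_\ast\omega_{C'/k}\hookrightarrow\omega_{C/k}$, obtain the three-step filtration (or equivalently two short exact sequences) with successive quotients $\Coker(g)$, $f_\ast\Coker(c_{C'/k})$, and $\omega_{C/k}/f_\ast\omega_{C'/k}$, and then use Grothendieck local duality on the Gorenstein local ring $\O_{C,x}$ to identify the length of the last piece with that of $(f_\ast\O_{C'}/\O_C)_x$. The paper asserts the compatibility $c_{C/k}=\iota\circ f_\ast(c_{C'/k})\circ\alpha$ without comment, so your concern about verifying it ``on the nose'' is a valid point of care, but your proposed verification (agreement over the dense open where $f$ is an isomorphism, plus torsion-freeness of $\omega_{C/k}$) is exactly right and the argument works over any base field.
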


\begin{proof}
We have the following the sequence of homomorphisms of $\O_{C}$-modules
\[
\xymatrix{
\Omega^1_{C/k} \ar[r]^-{g} 
& f_{\ast} \big( \Omega^1_{C'/k}/\Ker(c_{C'/k}) \big)
 \ar[r]^-{f_{\ast} c_{C'/k}}
& f_{\ast} \omega_{C'/k} \ar[r]^-{h} & \omega_{C/k},
}
\]
whose composition is equal to the class map $c_{C/k}$. 
By assumption, $h$ is generically an isomorphism. 
Since $C'$ is reduced and a local complete intersection, 
$\omega_{C'/k}$ is torsion free. 
Hence, $h$ is injective. 
Moreover, $f_{\ast} c_{C'/k}$ is injective.
We obtain the two short exact sequences
\[
\xymatrix{
0 \ar[r]& \Coker(g) \ar[r]
& \Coker(c_{C/k}) \ar[r]
& \Coker(h \circ f_{\ast} c_{C'/k}) \ar[r]
& 0
}
\]
and
\[
\xymatrix{
0 \ar[r]
& f_{\ast} \Coker(c_{C'/k}) \ar[r]
& \Coker(h \circ f_{\ast} c_{C'/k}) \ar[r]
& \Coker(h) \ar[r]
& 0.
}
\]
Taking dimensions of the stalks, we obtain the following equality
\[
\jac(C,x) \,=\, \dim_k \Coker(g)_x
\,+\, \dim_k \Coker(h)_x \,+\, \dim_k \left(f_{\ast} \Coker(c_{C'/k})\right)_x.
\]
By Proposition \ref{Proposition:lci}, the last term is equal to the sum
$\sum_{y \in f^{-1}(x)} \jac(C',y)$.

It remains show
\[ \dim_k \Coker(h)_x
= \dim_k \left((f_{\ast} \O_{C'})/\O_C\right)_x. \]
We put $A := \O_{C,x}$.
Then,
$B := \Gamma(C' \otimes_C \Spec A,\,\O_{C' \otimes_C \Spec A})$
is a finite semi-local $A$-algebra.
We have a short exact sequence of $A$-modules
$0 \to A \to B \to B/A \to 0$.
By assumption, the $A$-module $B/A$ is of finite length.
Since $A$ is a local complete intersection, it is Gorenstein,
and thus, the dualizing module $\omega_A$
is a free $A$-module of rank $1$.
Hence
$\Hom_{A}(B/A, \omega_A) = 0$
and
$H^0_{\idealm_A}(B/A) = B/A$,
where $\idealm_A$ denotes the maximal ideal of $A$.
By Grothendieck's local duality \cite[Theorem 3.5.8]{BrunsHerzog}, we have
$\Ext^1_{A}(B, \omega_A) \,=\, \Hom_A\left(H^0_{\idealm_A}(B), E(A/\idealm_A)\right) \,=\, 0$.
We also have $\Hom_{A}(A, \omega_A) = \omega_A$.
Hence, we obtain the following short exact sequence:
\[
\xymatrix{
0 \ar[r]
& \Hom_{A}(B, \omega_A) \ar[r]^-{h}
& \omega_A \ar[r]
& \Ext^1_{A}(B/A, \omega_A) \ar[r]
& 0.
}
\]
Grothendieck's local duality gives an isomorphism
\[ \Ext^1_{A}(B/A, \omega_A)
   \,\cong\, \Hom_A\left(H^0_{\idealm_A}(B/A), E(A/\idealm_A)\right). \]
The right hand side is the Matlis dual of $B/A$ because
$H^0_{\idealm_A}(B/A) = B/A$.
Since Matlis duality preserves lengths (see \cite[Proposition 3.2.12]{BrunsHerzog}), 
we have
\[ \length_A (\Coker(h)) \,=\, \length_A \Ext^1_{A}(B/A, \omega_A)
   \,=\, \length_A (B/A). \]
Therefore we have
$\dim_k \Coker(h)_x
= \dim_k \left((f_{\ast} \O_{C'})/\O_C\right)_x$.
\end{proof}

\subsection{A criterion for the normalization of a curve being smooth}

In this subsection,
we give a sufficient condition in terms of Jacobian numbers 
for the smoothness of the normalization
of local complete intersection curves.

\begin{thm}
\label{MainTheorem3}
Let $C$ be an integral curve over
a (possibly imperfect) field $k$ of characteristic $p>0$.
Assume that $C$ is local complete intersection over $k$,
and the Jacobian numbers of $C$ 
are strictly less than $p$ at every closed point of $C$.
Let $\pi \colon \widetilde{C} \to C$ be the normalization morphism.
Then $\widetilde{C}$ is smooth over $k$.
\end{thm}

\begin{proof}
Since $\widetilde{C}$ and $\Spec k$ are both regular schemes,
the morphism $\widetilde{C} \to \Spec k$ is a local complete intersection;
see \cite[Chapter 6, Example 3.18]{Liu:Book}.
By Proposition \ref{Proposition:JacobianNumberFiniteMorphism}, we have
$\jac(\widetilde{C},y) \leq \jac(C,x) < p$
for all closed points $x \in C$ and $y \in \widetilde{C}$ with $\pi(y) = x$.
Hence, $\widetilde{C}$ is smooth over $k$ by Proposition \ref{Proposition:SmallJacobianNumberSmoothness}.
\end{proof}

\begin{rem}
Theorem \ref{MainTheorem3} is in some sense optimal;
see Lemma \ref{Lemma:QuasiHyperelliptic} for the construction of
a non-smooth regular curve over an imperfect field
which has a singular point of Jacobian number $p$.
\end{rem}

\subsection{Jacobian numbers and completions}

\begin{prop}
\label{Proposition:JacobianNumberCompletion}
Let $C$ and $C'$ be curves over $k$.
Let $x \in C$ and $x' \in C'$ be closed points, such that the completed local rings 
are isomorphic, i.e., there exists an isomorphism of $k$-algebras
$\widehat{\O}_{C,x} \cong \widehat{\O}_{C',x'}$.
Then, we have
$\jac(C,x) \,=\, \jac(C',x')$.
\end{prop}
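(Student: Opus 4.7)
The plan is to show that both sides of the asserted equality can be computed intrinsically from the completed local ring, after which the hypothesis $\widehat{\O}_{C,x} \cong \widehat{\O}_{C',x'}$ finishes the proof. Write $A := \O_{C,x}$ and $A' := \O_{C',x'}$, with completions $\widehat{A} = \widehat{A}'$ identified via the given isomorphism, and let $J := \Fitt^1_A(\Omega^1_{A/k}) \subset A$ and $J' := \Fitt^1_{A'}(\Omega^1_{A'/k}) \subset A'$, so that $\jac(C,x) = \dim_k A/J$ and $\jac(C',x') = \dim_k A'/J'$.

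First I would reduce the proposition to showing $\dim_k A/J = \dim_k \widehat{A}/J\widehat{A}$ (and analogously for $A'$). Since $A \to \widehat{A}$ is faithfully flat, there is always a canonical injection $A/J \hookrightarrow \widehat{A}/J\widehat{A}$. If $\dim_k A/J = \infty$, the same holds on the right, so both sides agree. If $\dim_k A/J < \infty$, then $A/J$ is Artinian, hence $\idealm_A^n \subset J$ for some $n$; therefore $\widehat{A}/J\widehat{A}$ is a quotient of $\widehat{A}/\idealm_A^n \widehat{A} = A/\idealm_A^n$, forcing the above injection to be an isomorphism.

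Next I would identify $J\widehat{A}$ intrinsically in terms of $\widehat{A}$. Since $A$ is essentially of finite type over $k$, $\Omega^1_{A/k}$ is a finitely generated $A$-module. The compatibility of Fitting ideals with base change (\cite[Proposition 16.29 (3)]{GoertzWedhorn}) yields
\[
J\widehat{A} \,=\, \Fitt^1_{\widehat{A}}\!\left(\Omega^1_{A/k} \otimes_A \widehat{A}\right),
\]
and Lemma \ref{DifferentialCompletion} identifies $\Omega^1_{A/k} \otimes_A \widehat{A}$ with $\varprojlim_n \bigl(\Omega^1_{\widehat{A}/k}/\idealm_{\widehat{A}}^n \Omega^1_{\widehat{A}/k}\bigr)$, an object depending only on $\widehat{A}$ as a $k$-algebra. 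Applying the same reasoning to $A'$, and using that the given isomorphism $\widehat{A} \cong \widehat{A}'$ induces an isomorphism of completed modules of K\"ahler differentials, we see that $J\widehat{A}$ and $J'\widehat{A}'$ correspond under $\widehat{A} \cong \widehat{A}'$. Combining with the first step gives $\jac(C,x) = \dim_k \widehat{A}/J\widehat{A} = \dim_k \widehat{A}'/J'\widehat{A}' = \jac(C',x')$.

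The only delicate point is the interplay between Fitting ideals (which are defined via finite presentations of $\Omega^1_{A/k}$) and the $\idealm$-adic completion of the differential module; but this is precisely what Lemma \ref{DifferentialCompletion} encapsulates, so no further work is required.
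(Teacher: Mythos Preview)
Your proof is correct and follows essentially the same approach as the paper's: both arguments reduce to computing $\jac(C,x)$ as $\dim_k \widehat{A}/\Fitt^1_{\widehat{A}}(\Omega^1_{A/k}\otimes_A\widehat{A})$ via base change of Fitting ideals, invoke Lemma~\ref{DifferentialCompletion} to identify $\Omega^1_{A/k}\otimes_A\widehat{A}$ intrinsically in terms of $\widehat{A}$, and handle the finite and infinite cases separately using $\idealm_A^n\subset J$ and faithful flatness of $A\to\widehat{A}$, respectively. The only difference is organizational: you front-load the finite/infinite dichotomy, whereas the paper treats the infinite case at the end.
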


\begin{proof}
It is enough to show that the Jacobian number $\jac(C,x)$ can be
calculated in terms of the completed local ring $\widehat{\O}_{C,x}$.
We set $A := \O_{C,x}$ and $\widehat{A} := \widehat{\O}_{C,x}$ and let
$\Omega^1_{C/k,x}$ be the stalk of $\Omega^1_{C/k}$ at $x$.
By definition, we have
\[ \jac(C,x) = \dim_k \left(A/\Fitt^1_A (\Omega^1_{C/k,x})\right). \]
If  $A/\Fitt^1_A (\Omega^1_{C/k,x})$
is a finite dimensional $k$-vector space, then there exists some $N\geq1$ such 
that
\[ \idealm_A^N \subset \Fitt^1_A (\Omega^1_{C/k,x}), \]
where $\idealm_A \subset A$ is the maximal ideal.
Hence, we find
\[
A/\Fitt^1_A (\Omega^1_{C/k,x})
\cong \left(A/\Fitt^1_A (\Omega^1_{C/k,x})\right) \otimes_A \widehat{A}
\cong \widehat{A}/\left(\Fitt^1_A (\Omega^1_{C/k,x}) \otimes_A \widehat{A}\right)
\]
Moreover, we have
\[ \Fitt^1_{A} (\Omega^1_{C/k,x}) \otimes_{A} \widehat{A}
= \Fitt^1_{\widehat{A}} (\Omega^1_{C/k,x} \otimes_A \widehat{A}) \]
since the formation of Fittings ideals commutes with base change.
Combining these results, we find
\[
\jac(C,x) \,=\, \dim_k \left(\widehat{A}/\Fitt^1_{\widehat{A}} (\Omega^1_{C/k,x} \otimes_A \widehat{A})\right).
\]

Note that the right hand side of this equation depends only 
on the completion $\widehat{A} = \widehat{\O}_{C,x}$ since
\[ \Omega^1_{C/k,x} \otimes_A \widehat{A} \cong
\varprojlim_{n} \left(\Omega^1_{\widehat{A}/k} / \idealm_{\widehat{A}}^n \Omega^1_{\widehat{A}/k}\right), \]
where $\idealm_{\widehat{A}} \subset \widehat{A}$ is the maximal ideal;
see \cite[Corollary 12.10]{Kunz}. 
If $\jac(C,x) = \infty$, then we also have 
\[
\jac(C,x) \,=\, \dim_k \left(\widehat{A}/\Fitt^1_{\widehat{A}} (\Omega^1_{C/k,x} \otimes_A \widehat{A})\right),
\]
because the homomorphism
\[
A/\Fitt^1_A (\Omega^1_{C/k,x}) \,\to\, 
\left(A/\Fitt^1_A (\Omega^1_{C/k,x})\right) \otimes_{A} \widehat{A} 
\,\cong\, 
\widehat{A}/\Fitt^1_{\widehat{A}} (\Omega^1_{C/k,x} \otimes_A \widehat{A})
\]
is faithfully flat and hence, injective.
\end{proof}


\begin{cor}
\label{Corollary:JacobianNumberDerivative}
Let $C$ be a curve over $k$ and let $x \in C(k)$ be a $k$-rational point.
If the complete local ring $\widehat{\O}_{C,x}$ is isomorphic to $k[[ S,T ]]/(f)$
for some non-zero formal power series $f \in k[[ S,T ]]$ with $f(0,0) = 0$, 
then we have
$\jac(C,x)  = \dim_k \left(k[[S,T]]/(f_S,f_T,f)\right)$.
Here, $f_S$ and $f_T$ are the derivatives of $f$ with respect to $S$ and $T$, 
respectively.
\end{cor}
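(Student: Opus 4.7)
The strategy is to apply Proposition~\ref{Proposition:JacobianNumberCompletion}, which reduces the computation of $\jac(C,x)$ to an explicit calculation inside the complete local ring $\widehat{A} := \widehat{\O}_{C,x} \cong k[[S,T]]/(f)$, and then to exhibit an explicit presentation of the relevant completed module of K\"ahler differentials in terms of $f_S$ and $f_T$.

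More precisely, setting $A := \O_{C,x}$, the proof of Proposition~\ref{Proposition:JacobianNumberCompletion} actually establishes the identity
\[
\jac(C,x) \,=\, \dim_k \bigl(\widehat{A}\big/\Fitt^1_{\widehat{A}}(\Omega^1_{C/k,x} \otimes_A \widehat{A})\bigr),
\]
and Lemma~\ref{DifferentialCompletion} identifies $\Omega^1_{C/k,x} \otimes_A \widehat{A}$ with the $\idealm_{\widehat{A}}$-adic completion $\widehat{\Omega}^1_{\widehat{A}/k}$ of the module of K\"ahler differentials of $\widehat{A}$. Hence it suffices to determine $\Fitt^1_{\widehat{A}}(\widehat{\Omega}^1_{\widehat{A}/k})$.

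The key computation is the completed conormal sequence attached to the surjection $k[[S,T]] \twoheadrightarrow \widehat{A}$ with principal kernel $(f)$:
\[
(f)/(f)^2 \;\xrightarrow{\;d\;}\; \widehat{\Omega}^1_{k[[S,T]]/k} \otimes_{k[[S,T]]} \widehat{A} \;\longrightarrow\; \widehat{\Omega}^1_{\widehat{A}/k} \;\longrightarrow\; 0.
\]
Here $\widehat{\Omega}^1_{k[[S,T]]/k}$ is the free $k[[S,T]]$-module of rank two on $dS,dT$, and since $f$ is a non-zero-divisor in $k[[S,T]]$ the $\widehat{A}$-module $(f)/(f)^2$ is free of rank one on the class of $f$, which under $d$ is sent to $f_S\,dS + f_T\,dT$. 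This gives a finite presentation
\[
\widehat{A} \;\xrightarrow{\binom{f_S}{f_T}}\; \widehat{A}^{\oplus 2} \;\longrightarrow\; \widehat{\Omega}^1_{\widehat{A}/k} \;\longrightarrow\; 0,
\]
from which the first Fitting ideal is read off (as the ideal generated by the $1\times 1$ minors of the presentation matrix) to be $(f_S, f_T)\widehat{A}$. Substituting this into the formula above yields
\[
\jac(C,x) \,=\, \dim_k\bigl(\widehat{A}/(f_S, f_T)\bigr) \,=\, \dim_k\bigl(k[[S,T]]/(f, f_S, f_T)\bigr),
\]
as required.

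The only step requiring care is the validity of the completed conormal sequence together with the description of $\widehat{\Omega}^1_{k[[S,T]]/k}$ as free of rank two on $dS, dT$; both are standard facts about formally smooth $k$-algebras. Once these are granted, the rest of the argument is a direct unwinding of the definition of the first Fitting ideal combined with Proposition~\ref{Proposition:JacobianNumberCompletion}.
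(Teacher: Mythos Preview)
Your proof is correct and takes essentially the same approach as the paper: both reduce via Proposition~\ref{Proposition:JacobianNumberCompletion} and Lemma~\ref{DifferentialCompletion} to computing $\Fitt^1_{\widehat{A}}$ of the $\idealm_{\widehat{A}}$-adic completion of $\Omega^1_{\widehat{A}/k}$, and both obtain the presentation $\widehat{A}\xrightarrow{(f_S,f_T)^t}\widehat{A}^{\oplus 2}\to\widehat{\Omega}^1_{\widehat{A}/k}\to 0$. The only cosmetic difference is that the paper derives this presentation by computing $\Omega^1_{\widehat{A}/k}/\idealm_{\widehat{A}}^N\Omega^1_{\widehat{A}/k}$ level by level and passing to the projective limit, whereas you package the same computation as the completed conormal sequence for $k[[S,T]]\twoheadrightarrow\widehat{A}$ together with the identification $\widehat{\Omega}^1_{k[[S,T]]/k}\cong k[[S,T]]\,dS\oplus k[[S,T]]\,dT$.
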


\begin{proof}
We set $\widehat{A} := \widehat{\O}_{C,x} \cong k[[S,T]]/(f)$.
By the proof of Proposition \ref{Proposition:JacobianNumberCompletion},
it suffices to calculate the first Fitting ideal of
\[ \varprojlim_{n} \left(\Omega^1_{\widehat{A}/k} / \idealm_{\widehat{A}}^n \Omega^1_{\widehat{A}/k}\right), \]
where $\idealm_{\widehat{A}} \subset \widehat{A}$ is the maximal ideal.
This $\widehat{A}$-module can be calculated as follows: we have
\[
\Omega^1_{\widehat{A}/k} / \idealm_{\widehat{A}}^N \Omega^1_{\widehat{A}/k}
\,\cong\,
\left((\widehat{A}/\idealm_{\widehat{A}}^N) dS \,\oplus\, (\widehat{A}/\idealm_{\widehat{A}}^N) dT\right)/J_N,
\]
where $J_N$ is the $(\widehat{A}/\idealm_{\widehat{A}}^N)$-module generated by
the image of
\[ df := f_S \, dS + f_T \, dT. \]
Taking the projective limit with respect to $N$, we have
\[ \varprojlim_{N} (\Omega^1_{\widehat{A}/k} / \idealm_{\widehat{A}}^N \Omega^1_{\widehat{A}/k})
\,\cong\,
(\widehat{A} \, dS \oplus \widehat{A} \, dT)/\widehat{J}, \]
where $\widehat{J}$ is the $\widehat{A}$-module generated by
the image of $df$ in $\widehat{A} \, dS \oplus \widehat{A} \, dT$.
From this, we see that the first Fitting ideal of the above $\widehat{A}$-module
is generated by $f_S$ and $f_T$.
\end{proof}

\subsection{Jacobian numbers of curves in characteristic $\mathbf{2}$}

As often in characteristic $p$ geometry, the situation is different if $p=2$.

\begin{prop}\label{JacobianChar=2}
Let $C$ be a curve over an algebraically closed field $k$ of characteristic $2$ 
and  let $x \in C$ be a closed point.
If the complete local ring $\widehat{\O}_{C,x}$ is isomorphic to $k[[ S,T ]]/(f)$
for some non-zero formal power series $f \in k[[ S,T ]]$ with $f(0,0) = 0$,
then the Jacobian number $\jac(C,x)$ is different from $2$.
\end{prop}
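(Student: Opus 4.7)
My plan is to invoke Corollary \ref{Corollary:JacobianNumberDerivative}, which reduces the claim to showing that
\[
\dim_k R/(f, f_S, f_T) \neq 2,
\]
where $R = k[[S,T]]$ has maximal ideal $\idealm = (S,T)$ and $f \in R$ with $f(0,0) = 0$. The key characteristic-$2$ input will be that for $f = \sum a_{ij} S^i T^j$ the terms $2 a_{20} S$ and $2 a_{02} T$ drop out under differentiation, so that
\[
f_S \equiv a_{11} T \pmod{\idealm^2}, \qquad f_T \equiv a_{11} S \pmod{\idealm^2}.
\]
Both linear parts are thus controlled by the single coefficient $a_{11}$: either both are zero or both are nonzero.

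With this observation in hand, I would split the analysis of $I := (f, f_S, f_T)$ into three cases. First, if $f \notin \idealm^2$, then $f$ is a regular parameter of $R$, so $R/(f)$ is regular, $C$ is smooth at $x$, and $\jac(C,x) = 0$. Second, if $f \in \idealm^2$ and $a_{11} \neq 0$, the displayed congruences yield $\idealm \subseteq I + \idealm^2$, hence $\idealm/I = \idealm \cdot (\idealm/I)$, and Nakayama's lemma applied to the finitely generated $R$-module $\idealm/I$ forces $\idealm/I = 0$, so $I = \idealm$ and $\jac(C,x) = 1$. Third, if $f \in \idealm^2$ and $a_{11} = 0$, then $f_S, f_T \in \idealm^2$ as well, whence $I \subseteq \idealm^2$ and consequently
\[
\jac(C,x) \,=\, \dim_k R/I \,\geq\, \dim_k R/\idealm^2 \,=\, 3.
\]

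Combining the three cases gives $\jac(C,x) \in \{0,1\} \cup \Z_{\geq 3}$, so the forbidden value $2$ is skipped. The main conceptual step is the Nakayama argument in the second case; everything else is bookkeeping about how the low-order terms of $f_S$ and $f_T$ behave in characteristic $2$. It is worth noting that in characteristic $p \geq 3$ this dichotomy collapses, as exemplified by the ordinary cusp $f = S^2 + T^3$, for which $\jac = 2$; this is precisely why the statement is special to characteristic $2$.
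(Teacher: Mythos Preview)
Your proof is correct and follows the same three-case structure as the paper's proof: both split according to whether $f$ has a nonzero linear part, and if not, according to whether the coefficient of $ST$ in the quadratic part vanishes. The one notable difference is in the case $f\in\idealm^2$, $a_{11}\neq 0$: the paper factors $f_2$, invokes a coordinate-change argument (as in \cite[Chapter I, Example 5.6.3]{Hartshorne}) to identify $x$ as a node, and then cites the computation $\jac=1$ for nodes; your Nakayama argument reaches $I=\idealm$ directly without identifying the singularity type, which is cleaner and self-contained.
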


\begin{proof}
We have
$\jac(C,x) = \dim_k \left(k[[S,T]]/(f_S,f_T,f)\right)$,
where $f_S$ and $f_T$ are the derivatives of $f$ with respect to 
$S$ and $T$, respectively; see Corollary \ref{Corollary:JacobianNumberDerivative}. 
We write $f$ in the form $f = \sum_{i=1}^{\infty} f_i$,
where $f_i$ is homogeneous of degree $i$. 
If $f_1 \neq 0$, then $f_S$ or $f_T$ is a unit in $k[[S, T]]$ and then, we
have $ \jac(C,x) = 0$.
Next, we write $f_2$ as $f_2 = aST+bS^2+cT^2$ for some $a,b,c \in k$. 
If $f_1 = 0$ and $a=0$, then we have
$(f_S,f_T,f) \subset (ST, S^2, T^2)$ (here, we use the condition $p=2$)
and find
$\jac(C,x) \geq \dim_k \left(k[[S,T]]/(ST, S^2, T^2)\right) =3$.
Finally, we assume $f_1 = 0$ and $a \neq 0$.
We claim that $x \in C$ is a node:
we write $f_2$ as
\[
f_2 \,=\, aST+bS^2+cT^2 \,=\, (uS+vT)\cdot (u'S+v'T),
\]
for some $u, v, u', v' \in k$, 
and where  the terms $uS+vT$ and $u'S+v'T$ are linearly independent
since $a \neq 0$ and $p=2$. 
As in \cite[Chapter I, Example 5.6.3]{Hartshorne},
we find an automorphism of $k[[S, T]]$ that sends $S, T$ 
to $g, h$, respectively, such that $f=gh.$
This shows that $x \in C$ is a node and thus, $\jac(C,x) = 1$; 
see Proposition \ref{Proposition:cusp}.
\end{proof}

\subsection{Upper semicontinuity of Jacobian numbers}

\begin{prop}
\label{Proposition:UpperSemicontinuityJacobianNumbers}
Let $U$ be a Noetherian integral scheme.
Let $\pi \colon \mathscr{C} \to U$ be a flat family of
proper and geometrically reduced curves parameterized by $U$.
Let $u_0 \in U$ be a closed point, 
let $N$ be a non-negative integer, 
and assume that the Jacobian numbers of $\mathscr{C}_{u_0}$ 
are smaller than or equal to $N$ at every closed point.
Then, there exists a non-empty open subset
$U'\subset U$  
such that for every point $x\in U'$
(not necessarily closed) the Jacobian numbers of the curve
$\mathscr{C}_x \otimes_{\kappa(x)} \kappa(x)^{\rm{sep}}$
over $\kappa(x)^{\rm{sep}}$ are smaller than or equal to 
$N$ at every closed point.
(We do not require the open subset $U'$ contains the closed point $u_0$.)
\end{prop}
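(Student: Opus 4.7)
The plan is to introduce the relative Jacobian subscheme $Z \subset \mathscr{C}$, show that $Z \to U$ is finite, bound the Jacobian numbers at the generic fiber via a semi-local Nakayama argument at $u_0$, and finally spread the bound out to a non-empty open $V \subset U$.

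First, I would define $Z$ as the closed subscheme cut out by $\Fitt^1_{\mathcal{O}_\mathscr{C}}(\Omega^1_{\mathscr{C}/U})$. Since the formation of relative Kähler differentials and of Fitting ideals both commute with base change, the schematic fiber $Z_u$ equals the Jacobian subscheme of $\mathscr{C}_u$ for every $u \in U$. The assumption that $\mathscr{C}_u$ is geometrically reduced implies that it is generically smooth, hence $Z_u$ is zero-dimensional. The composition $Z \hookrightarrow \mathscr{C} \to U$ is then proper and quasi-finite, hence finite.

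Next, I would set $A := \mathcal{O}_{U,u_0}$, let $K$ denote the function field of $U$, and consider the finite $A$-algebra $\widetilde{B} := \Gamma(Z \times_U \Spec A,\ \mathcal{O}_Z)$, whose maximal ideals $\idealm_{z_1},\ldots,\idealm_{z_s}$ correspond to the closed points of $Z_{u_0}$. For each semi-local factor $B_j := \widetilde{B}_{\idealm_{z_j}}$ the hypothesis yields
\[
\dim_{k(u_0)} B_j / \idealm_{u_0} B_j \,=\, \jac(\mathscr{C}_{u_0}, z_j) \,\leq\, N,
\]
so Nakayama's lemma shows that $B_j$ is generated by at most $N$ elements over $A$, and in particular $\dim_K(B_j \otimes_A K) \leq N$. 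Every closed point of $\mathscr{C}_\eta$ corresponds to a prime $\mathfrak{p} \subset \widetilde{B}$ with $\mathfrak{p} \cap A = (0)$; since every dominant irreducible component of $Z$ meets $Z_{u_0}$, such a $\mathfrak{p}$ is contained in some $\idealm_{z_j}$, and the Artinian local factor of $\widetilde{B} \otimes_A K$ at $\mathfrak{p}$ is a direct factor of $B_j \otimes_A K$. Hence the Jacobian number of $\mathscr{C}_\eta$ at every closed point is at most $N$.

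To propagate this bound to an open subset, I would consider the coherent $\mathcal{O}_U$-algebra $\mathcal{F} := (\pi|_Z)_* \mathcal{O}_Z$, whose generic stalk decomposes as $\mathcal{F}_\eta = \prod_i A_i$ into local Artinian $K$-algebras of $K$-dimension at most $N$. The corresponding orthogonal idempotents lift to sections of $\mathcal{F}$ on a non-empty open $V \subset U$, and after shrinking $V$ the relations $e_i^2 = e_i$, $e_i e_j = 0$ ($i \ne j$), and $\sum_i e_i = 1$ hold, giving a decomposition $\mathcal{F}|_V = \prod_i \mathcal{F}_i$ of coherent $\mathcal{O}_V$-algebras. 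Generic flatness allows me to shrink $V$ further so that each $\mathcal{F}_i$ is locally free of rank $\dim_K A_i \leq N$. For any $x \in V$, each fiber algebra $\mathcal{F}_i \otimes_{\mathcal{O}_V} k(x)$ has $k(x)$-dimension at most $N$, so each of its local Artinian factors does too, and thus $\jac(\mathscr{C}_x, z) \leq N$ for every closed point $z \in \mathscr{C}_x$. Applying Proposition~\ref{Proposition:JacobianNumberEtaleCovering} to the extension $k(x)^{\rm sep}/k(x)$ then gives $\jac(\mathscr{C}_x \otimes_{k(x)} k(x)^{\rm sep}, \bar{z}) \leq \jac(\mathscr{C}_x, z) \leq N$ at every closed point $\bar{z}$ above $z$, so $V$ is the desired $U'$.

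The main technical obstacle is to bound the Jacobian number at each individual closed point of $\mathscr{C}_\eta$ rather than merely the total $K$-dimension of $\mathcal{F}_\eta$, which may be much larger than $N$ when $\mathscr{C}_{u_0}$ has many singular points. The semi-local Nakayama argument above overcomes this by controlling each stalk $B_j$ separately, thereby keeping track of the neighborhood of each singular point of $\mathscr{C}_{u_0}$ individually.
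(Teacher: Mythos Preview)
Your overall strategy matches the paper's, but the Nakayama step contains a genuine gap. You set $B_j := \widetilde{B}_{\idealm_{z_j}}$ and claim that since $\dim_{k(u_0)} B_j/\idealm_A B_j \leq N$, Nakayama gives that $B_j$ is generated by at most $N$ elements over $A$. Nakayama requires $B_j$ to be a \emph{finite} $A$-module, and a localization of $\widetilde{B}$ at a maximal ideal is generally not one. Concretely, take $A = k[t]_{(t)}$ and $\widetilde{B} = A[x]/(x^2-(1+t))$ (an integral domain finite over $A$). Then $\widetilde{B}$ has two maximal ideals $\idealm_{z_1}=(x-1,t)$ and $\idealm_{z_2}=(x+1,t)$, and $B_1/\idealm_A B_1 \cong k$ has dimension $1$, yet $B_1\otimes_A K = \Frac(\widetilde{B})$ has $K$-dimension $2$. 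Translated back, this means the generic fibre $Z_\eta$ has a single closed point whose Jacobian number is $2$, while each closed point of $Z_{u_0}$ has Jacobian number $1$; your claimed bound $\jac(\mathscr{C}_\eta,t_i)\leq N$ (before any separable base change) is therefore false in general, and the rest of your spreading argument, which relies on $\dim_K A_i\leq N$, collapses.

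The paper avoids this by first replacing $U$ by an \'etale neighbourhood of $u_0$ (equivalently, passing to the strict Henselization). Over a Henselian local base the finite algebra $\widetilde{B}$ decomposes as a \emph{product} of local finite $A$-algebras, one for each $z_j$, and Nakayama applies to each factor. This yields, for every connected component $W$ of $Z\times_U\Spec\O_{U,u_0}$ with unique closed point $s$, the inequality $\sum_{t_i\in W}\jac(\mathscr{C}_\eta,t_i)\leq\jac(\mathscr{C}_{u_0},s)\leq N$. The \'etale base change is harmless for the conclusion because Proposition~\ref{Proposition:JacobianNumberEtaleCovering} shows Jacobian numbers only split under field extension; this is exactly why the statement is formulated over $k(x)^{\rm sep}$. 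Your idempotent-lifting argument for spreading out is fine once the generic bound is corrected, and is essentially equivalent to the paper's second \'etale replacement near $\eta$.
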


\begin{proof}
Since the smooth locus of $\pi \colon \mathscr{C} \to U$ is dense in every fiber,
the support of $\O_{\mathscr{C}}/\Fitt^1_{\O_{\mathscr{C}}}(\Omega^1_{\mathscr{C}/U})$
has only finitely many closed points in each fiber of $\pi$. 
Let $i_Z \colon Z \hookrightarrow \mathscr{C}$
be the closed subscheme of $\mathscr{C}$ defined by 
$\Fitt^1_{\O_{\mathscr{C}}}(\Omega^1_{\mathscr{C}/U})$.
The morphism $Z \to U$ is finite because it is both proper and quasi-finite.
Let $\eta\in U$ be the generic point, let
$\mathscr{C}_{\eta} := \mathscr{C} \times_U \eta$ be the generic fiber, 
and let $t_1,\ldots,t_n$ be closed points of $\mathscr{C}_\eta$ such that 
$Z_{\eta} \,=\, \{\, t_1,\ldots,t_n \,\}$.

We put $\overline{u}_0 := \Spec \kappa(u_0)^{\rm{sep}} \to U$,
which is a geometric point above $u_0$.
Let $\widetilde{U}_{\overline{u}_0}$ be the strict Henselization of $U$
relative to $\overline{u}_0$.
Then, every connected component of 
$Z \times_{U}{\widetilde{U}_{\overline{u}_0}}$ 
has a unique element above the closed point of 
$\widetilde{U}_{\overline{u}_0}$.
Since the strict Henselization is a direct limit of \'etale neighborhoods of $u_0$,
we may assume, after possibly replacing $U$ by an \'etale neighborhood of $u_0$, 
that every connected component of $Z \times_{U} \Spec \O_{U,u_0}$
has a unique element above $u_0$.
(Here we use Proposition \ref{Proposition:JacobianNumberEtaleCovering}:
for a field extension $k'/\kappa(u_0)$, the Jacobian numbers of
$\mathscr{C}_{u_0} \otimes_{\kappa(u_0)} k'$ are also smaller than or equal to $N$.
Hence, it is enough to prove the assertion after shrinking $U$
and replacing $U$ by an \'etale neighborhood of $u_0$.)

Let $W$ be a connected component of $Z \times_{U} \Spec \O_{U,u_0}$
that intersects non-trivially with the generic fiber $Z_{\eta}$. 
We put $A := \O_{U,u_0}$ and $B := \O_{W}$.
Then, $B$ is a finite $A$-algebra.
Let $s \in W$ be the unique element above $u_0$.
We have
$\jac(\mathscr{C}_{u_0}, s) = \dim_{A/\idealm_A} \left(B \otimes_A (A/\idealm_A)\right)$,
where $\idealm_A \subset A$ is the maximal ideal corresponding 
to $u_0$.
Similarly, for every point $t_i \in W$ in the generic fiber, we have
\[ \jac(\mathscr{C}_{\eta}, t_i) = \dim_{\Frac(A)} \left(B \otimes_A \Frac(A)\right)_{\idealp_i}, \]
where $\idealp_i$
is the prime ideal of $B \otimes_A \Frac(A)$ corresponding to $t_i$.
Then, we have
\begin{align*}
\sum_{t_i \in W} \jac(\mathscr{C}_{\eta}, t_i)
&= \dim_{\Frac(A)} \left(B \otimes_A \Frac(A)\right) \\
&\leq \dim_{A/\idealm_A} \left(B \otimes_A (A/\idealm_A)\right) = \jac(\mathscr{C}_{u_0}, s)
\end{align*}
by Nakayama's lemma.
Since we assumed $\jac(\mathscr{C}_{u_0}, s) \leq N$,
we have $\jac(\mathscr{C}_{\eta}, t_i) \leq N$ for every $t_i \in W$.
This shows the assertion of this proposition for the generic fiber.

Replacing $U$ by an \'etale neighborhood of $\eta$ if necessary,
we may assume that the following three conditions are satisfied:
\begin{enumerate}
\item the residue fields at $t_i$ for $i=1,\ldots,n$ are purely inseparable extensions of $\kappa(\eta)$, 
\item the Zariski closures $\overline{\{ t_i \}} \subset Z$ for $i=1,\ldots,n$,
do not intersect with each other over $U$, and
\item the morphism  $Z \to U$ is flat.
\end{enumerate}
Let $u \in U$ be a point, which is not necessarily closed.
Since the $\overline{\{ t_i \}} \ (1 \leq i \leq n)$ do not intersect 
over $u \in U$ and since the morphism 
$Z \to U$
is flat, 
for each element $s \in Z$ above $u$,
there is a unique integer $i$ with $s \in \overline{\{ t_i \}}$.
We note that the Zariski closure 
$\overline{\{ t_i \}}$ of $t_i$ in $Z \times_{U} \Spec \O_{U,u}$
is a connected component of 
$Z \times_{U} \Spec \O_{U,u}$
and $s$ is the unique element of $\overline{\{ t_i \}}$ above $u$.
Since $\Gamma(\overline{\{ t_i \}}, \O_{Z \times_{U} \Spec \O_{U,u}})$
is a free $\O_{U,u}$-module of finite rank,
by the same argument as before, we have
$\jac(\mathscr{C}_{\eta}, t_i) = \jac(\mathscr{C}_u, s)$.
Hence, we have $\jac(\mathscr{C}_u, s) \leq N$.
\end{proof}

\section{$\delta$-invariants of curves over arbitrary fields}
\label{Section:DeltaInvariant}

In this section, we briefly recall the definition and the basic properties of $\delta$-invariants which we need.
For a curve over an algebraically closed field, we define $\delta$-invariants in the usual way.
For a curve over an imperfect field, we basically only consider the $\delta$-invariants of the base change of the curve to an algebraically closed field because we want to study non-smooth points rather than non-regular points.
Therefore, it is useful to introduce a variant of the $\delta$-invariant, which we call the \textit{geometric} $\delta$-invariant, of a closed point of a curve over an arbitrary field.

Let $C$ be a geometrically reduced curve over a field $k$.
We put $\overline{C}:=C\otimes_k {k^{\alg}}$.
Let $\pi \colon \widetilde{\overline{C}} \to \overline{C}$
be the normalization morphism.
Let $p \colon \overline{C} \to C$ be the natural morphism.

\begin{defn}
\label{Definition:DeltaInvariant}
For a closed point $x\in \overline{C}$, the \textit{$\delta$-invariant} of $\overline{C}$ at $x$ is defined to be
\[ \delta(\overline{C}, x) := \dim_{k^{\alg}
 }(\pi_\ast\O_{\widetilde{\overline{C}}}/\O_{\overline{C}})_x. \]
For a closed point $x \in C$,
the \textit{geometric $\delta$-invariant} of $C$ at $x$
is defined to be
\[ \delta(C,x) \,:=\, \sum_{y \in p^{-1}(x)} \delta(\overline{C}, y). \]
\end{defn}

We collect some basic properties, which can be verified immediately.

\begin{prop}\label{Proposition:DeltaInvariantNormalizationSmooth}
Let $C$ be a geometrically integral curve over a field $k$.
Let $\pi \colon \widetilde{C} \to C$ be the normalization morphism.
If $\widetilde{C}$ is smooth over $k$, we have
$\delta(C,x) = \dim_k(\pi_{\ast}\O_{\widetilde{C}}/\O_C)_x$.
\end{prop}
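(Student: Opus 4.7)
The plan is to exploit that smoothness (as opposed to mere regularity) of $\widetilde{C}$ over $k$ is exactly the condition that makes the normalization commute with base change to $k^{\alg}$; once this is in hand, the equality of dimensions follows from faithfully flat base change applied to the coherent sheaf $\mathcal{Q} := \pi_{\ast}\O_{\widetilde{C}}/\O_C$, whose support consists of finitely many closed points.

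First I would observe the following: since $\widetilde{C}$ is smooth over $k$, the base change $\widetilde{C} \otimes_k k^{\alg}$ is smooth, hence in particular normal, over $k^{\alg}$. The induced morphism
\[
\pi_{k^{\alg}} \,\colon\, \widetilde{C}\otimes_k k^{\alg} \,\longrightarrow\, \overline{C}
\]
is finite (finiteness is stable under base change) and birational (the base change of the birational morphism $\pi$ along the flat morphism $\Spec k^{\alg} \to \Spec k$ is again birational). A finite birational morphism from a normal scheme is the normalization morphism, so $\widetilde{C}\otimes_k k^{\alg}$ is canonically identified with the normalization $\widetilde{\overline{C}}$ of $\overline{C}$ and $\pi_{k^{\alg}}$ is identified with the normalization morphism $\widetilde{\overline{C}} \to \overline{C}$.

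Next, I would transport $\mathcal{Q}$ through this identification. The non-smooth locus of $C$ over $k$ is a finite set of closed points (here I use geometric reducedness), and $\mathcal{Q}$ is a coherent $\O_C$-module supported there. Since the base change $\Spec k^{\alg} \to \Spec k$ is flat, formation of $\mathcal{Q}$ commutes with it:
\[
\mathcal{Q} \otimes_k k^{\alg} \,\cong\, (\pi_{k^{\alg}})_{\ast}\O_{\widetilde{\overline{C}}}/\O_{\overline{C}}.
\]
Taking the stalk at a closed point $x\in C$ and using that $\mathcal{Q}_x$ has finite support under the map $p\colon \overline{C} \to C$, the right-hand side decomposes as
\[
\mathcal{Q}_x \otimes_k k^{\alg} \,\cong\, \bigoplus_{y\in p^{-1}(x)} \bigl((\pi_{k^{\alg}})_{\ast}\O_{\widetilde{\overline{C}}}/\O_{\overline{C}}\bigr)_y.
\]

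Finally I would take dimensions. Since $\mathcal{Q}_x$ is a finite-dimensional $k$-vector space, tensoring with $k^{\alg}$ preserves dimension, and by definition of the $\delta$-invariants the right-hand side above has dimension $\sum_{y\in p^{-1}(x)} \delta(\overline{C},y) = \delta(C,x)$. This gives
\[
\dim_k (\pi_{\ast}\O_{\widetilde{C}}/\O_C)_x \,=\, \dim_{k^{\alg}} \bigl(\mathcal{Q}_x \otimes_k k^{\alg}\bigr) \,=\, \delta(C,x),
\]
as desired. The only delicate step is the identification of $\widetilde{C}\otimes_k k^{\alg}$ with the normalization of $\overline{C}$; this is precisely where smoothness (and not just regularity) of $\widetilde{C}$ is needed, since an imperfect residue field extension could otherwise prevent the base change from being normal.
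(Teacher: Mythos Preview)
Your argument is correct and is exactly the verification the paper has in mind; the paper itself omits the proof, merely stating that this and the neighboring propositions ``can be verified immediately.'' The only point worth noting is that your stalk computation is cleanest if you first reduce to the case where $C\setminus\{x\}$ is smooth (so that $\mathcal{Q}$ is supported only at $x$ and global sections compute the stalk), but this is implicit in what you wrote.
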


\begin{prop}
\label{Proposition:DeltaInvariantZeroSmoothness}
Let $C$ be a geometrically reduced curve over a field $k$.
For a closed point $x \in C$,
we have $\delta(C,x) = 0$ if and only if $C$ is smooth at $x$.
\end{prop}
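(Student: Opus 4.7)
The plan is to reduce to the classical statement for reduced curves over an algebraically closed field, and then exploit that the normalization of such a curve is a finite birational morphism whose cokernel detects non-regular points.

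First, by Definition \ref{Definition:DeltaInvariant} we have
\[
\delta(C,x) \,=\, \sum_{y \in p^{-1}(x)} \delta(\overline{C},y),
\]
and each summand is a non-negative integer. Hence $\delta(C,x) = 0$ if and only if $\delta(\overline{C},y) = 0$ for every closed point $y \in \overline{C}$ lying over $x$. On the other hand, smoothness is stable under and descends along arbitrary base change, and $p \colon \overline{C} \to C$ is faithfully flat, so $C$ is smooth at $x$ if and only if $\overline{C}$ is smooth at every $y \in p^{-1}(x)$. Thus it suffices to prove: for a closed point $y$ of the reduced curve $\overline{C}$ over the algebraically closed field $k^{\alg}$, one has $\delta(\overline{C},y) = 0$ if and only if $\overline{C}$ is smooth at $y$.

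For this, I would argue as follows. Since $\overline{C}$ is reduced and one-dimensional, the normalization $\pi \colon \widetilde{\overline{C}} \to \overline{C}$ is a finite birational morphism, and $(\pi_{\ast}\O_{\widetilde{\overline{C}}})_y$ identifies with the integral closure of $\O_{\overline{C},y}$ in its total ring of fractions. Hence $\delta(\overline{C},y) = 0$ is equivalent to $\O_{\overline{C},y}$ being integrally closed in its total ring of fractions, that is, normal. A normal Noetherian local ring of dimension one is a discrete valuation ring, so in particular regular. Conversely, if $\O_{\overline{C},y}$ is regular, being one-dimensional it is a DVR and hence normal, forcing $(\pi_{\ast}\O_{\widetilde{\overline{C}}}/\O_{\overline{C}})_y = 0$. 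Over the algebraically closed (hence perfect) field $k^{\alg}$, regularity of a local ring of a finite type scheme is equivalent to smoothness, so this gives the claimed equivalence and completes the reduction.

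No step looks genuinely difficult; the only point where one needs mild care is verifying that $\overline{C}$ really is reduced so that the normalization map is a finite birational morphism whose cokernel localizes correctly, but this is precisely the hypothesis that $C$ is geometrically reduced. The parallel with Proposition \ref{Proposition:JacobianNumberZeroSmoothness} for Jacobian numbers is evident, although here the argument goes through the integral closure rather than through Fitting ideals.
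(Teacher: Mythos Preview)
Your argument is correct and is exactly the natural verification the paper has in mind: the paper does not give a proof but simply states that this (and the neighboring propositions) ``can be verified immediately.'' Your reduction to $\overline{C}$ via Definition \ref{Definition:DeltaInvariant} together with faithfully flat descent of smoothness, followed by the standard equivalence $\delta=0 \Leftrightarrow$ normal $\Leftrightarrow$ regular $\Leftrightarrow$ smooth for reduced curves over an algebraically closed field, is precisely the intended immediate check.
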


\begin{prop}
\label{Proposition:DeltaInvariantClosedSubscheme}
Let $C$ and $C'$ be geometrically reduced curves over a field $k$ together with a closed immersion 
$i \colon C' \hookrightarrow C$.
For every closed point $x \in C'$, we have the inequality
$\delta(C',x) \leq \delta(C,x)$.
\end{prop}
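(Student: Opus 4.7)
The plan is to reduce to the case where the base field is algebraically closed, and then to handle that case via a short local algebra computation involving the decomposition of the normalization along the minimal primes. First, unwinding the definition: with $p_C\colon\overline{C}\to C$ and $p_{C'}\colon\overline{C'}\to C'$ the natural morphisms, the base change of $i$ gives a closed immersion $\overline{C'}\hookrightarrow\overline{C}$, and $p_{C'}^{-1}(x)\subseteq p_C^{-1}(x)$ as subsets of $\overline{C}$. Hence, once the pointwise inequality
\[
\delta(\overline{C'},y)\,\leq\,\delta(\overline{C},y)
\]
is established for every closed point $y\in\overline{C'}$, summing over $y\in p_{C'}^{-1}(x)$ and using non-negativity of the remaining terms in $\delta(C,x)$ yields the desired inequality. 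It therefore suffices to treat the case that $k$ is algebraically closed and $x\in C'$ is a closed point.

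In this setting, I set $A:=\O_{C,x}$, $B:=\O_{C',x}$, and $I:=\ker(A\to B)$. Because both $C$ and $C'$ are reduced of pure dimension $1$, every generic point of $C'$ is a generic point of a component of $C$; since $C'$ is closed and reduced, $C'$ is locally at $x$ the union of a subset of the irreducible components of $C$, with its reduced structure. Letting $\mathfrak{p}_1,\ldots,\mathfrak{p}_r$ be the minimal primes of $A$ and $S\subseteq\{1,\ldots,r\}$ the set of indices of components contained in $C'$, we obtain $I=\bigcap_{i\in S}\mathfrak{p}_i$. The normalizations decompose accordingly as $\widetilde{A}=\prod_{i=1}^{r}\widetilde{A/\mathfrak{p}_i}$ and $\widetilde{B}=\prod_{i\in S}\widetilde{A/\mathfrak{p}_i}$, and I denote by $J$ the kernel of the projection $\widetilde{A}\twoheadrightarrow\widetilde{B}$ onto the factors indexed by $S$.

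The key identity is $J\cap A=I$: an element $a\in A$ lies in $J$ iff its image in $\widetilde{A/\mathfrak{p}_i}$ is zero for each $i\in S$, and since $A\to\widetilde{A/\mathfrak{p}_i}$ factors as $A\twoheadrightarrow A/\mathfrak{p}_i\hookrightarrow\widetilde{A/\mathfrak{p}_i}$, this is equivalent to $a\in\bigcap_{i\in S}\mathfrak{p}_i=I$. Combined with the surjectivity of $\widetilde{A}\to\widetilde{B}$ and of $A\to B$, this identity shows that the induced map $\widetilde{A}/A\twoheadrightarrow\widetilde{B}/B$ is surjective with kernel $(A+J)/A$, so $\dim_k(\widetilde{B}/B)\leq\dim_k(\widetilde{A}/A)$, which is exactly the required inequality. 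The only point where care is needed is the structural observation that a reduced closed subscheme of $C$ of the same pure dimension must be a union of irreducible components with its reduced structure; once that is in hand, the rest is formal.
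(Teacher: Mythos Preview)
Your proof is correct. The paper does not actually give a proof of this proposition; it is one of several basic properties stated together with the remark that they ``can be verified immediately,'' so there is nothing to compare against at the level of argument.

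Your write-up supplies exactly the details one would expect: the reduction to the algebraically closed case via the definition of the geometric $\delta$-invariant is clean, and the local computation is the natural one. The structural point you single out---that a reduced closed subscheme $C'\subset C$ of the same pure dimension $1$ is necessarily a union of irreducible components of $C$---is indeed the crux; once that is in place the normalization of $C'$ is a direct factor of the normalization of $C$ and the surjection $\widetilde{A}/A\twoheadrightarrow\widetilde{B}/B$ follows formally. One minor remark: the identity $J\cap A=I$ is not strictly needed for the surjectivity of $\widetilde{A}/A\to\widetilde{B}/B$ (surjectivity of the two vertical maps suffices), but it does pin down the kernel as you state, and in any case it follows immediately from the commutativity of the square together with the injectivity of $B\hookrightarrow\widetilde{B}$.
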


\begin{prop}
\label{Proposition:DeltaInvariantEtaleCovering}
Let $C$ be a geometrically reduced curve over a field $k$ and let $k'/k$ be a field extension.
We denote by $p \colon C_{k'}=C \otimes_k k' \to C$
the natural morphism.
For every closed point $x \in C$, we have
$\delta(C,x) = \sum_{y \in p^{-1}(x)} \delta(C_{k'}, y)$.
\end{prop}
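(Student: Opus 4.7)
The plan is to mirror the approach of Proposition \ref{Proposition:JacobianNumberEtaleCovering}, but with an additional first step that reduces to the case where both base fields are algebraically closed, since the $\delta$-invariant is defined through algebraic closures. First I would unfold the definition: write $\overline{C} := C \otimes_k k^{\alg}$ with projection $\bar p \colon \overline{C} \to C$, and $\overline{C_{k'}} := C_{k'} \otimes_{k'} (k')^{\alg} = C \otimes_k (k')^{\alg}$ with projection $\overline{p_{k'}} \colon \overline{C_{k'}} \to C_{k'}$. Fixing an embedding $k^{\alg} \hookrightarrow (k')^{\alg}$ yields a base-change morphism $q \colon \overline{C_{k'}} \to \overline{C}$ satisfying $\bar p \circ q = p \circ \overline{p_{k'}}$. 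Since a closed point $z \in \overline{C}$ has residue field $k^{\alg}$ by the Nullstellensatz, the fiber $q^{-1}(z) = \Spec\bigl(k^{\alg} \otimes_{k^{\alg}} (k')^{\alg}\bigr) = \Spec\bigl((k')^{\alg}\bigr)$ is a single closed point, call it $w_z$, so $z \mapsto w_z$ gives a bijection $\bar p^{-1}(x) \to (\bar p \circ q)^{-1}(x)$, and the latter set also decomposes as $\bigsqcup_{y \in p^{-1}(x)} \overline{p_{k'}}^{-1}(y)$.

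With this combinatorics fixed, the problem reduces to showing $\delta(\overline{C},z) = \delta(\overline{C_{k'}},w_z)$ for every closed $z \in \overline{C}$. The crucial input is that the normalization $\pi \colon \widetilde{\overline{C}} \to \overline{C}$ is smooth over the algebraically closed (hence perfect) field $k^{\alg}$, so the base change $\widetilde{\overline{C}} \otimes_{k^{\alg}} (k')^{\alg}$ remains smooth, in particular normal, while the induced finite birational morphism to $\overline{C_{k'}}$ identifies it with the normalization $\widetilde{\overline{C_{k'}}}$ by the universal property. Consequently, the coherent sheaf $\pi_{\ast}\O_{\widetilde{\overline{C}}}/\O_{\overline{C}}$ commutes with this base change.

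Now I would take stalks. The stalk of $\pi_{\ast}\O_{\widetilde{\overline{C}}}/\O_{\overline{C}}$ at $z$ is a finite-dimensional $k^{\alg}$-vector space of dimension $\delta(\overline{C},z)$; since the stalk is annihilated by a power of $\idealm_{\overline{C},z}$, base change to $(k')^{\alg}$ identifies it with the stalk of $\pi'_{\ast}\O_{\widetilde{\overline{C_{k'}}}}/\O_{\overline{C_{k'}}}$ at $w_z$, and preserves the vector-space dimension. This yields $\delta(\overline{C},z) = \delta(\overline{C_{k'}},w_z)$. Summing and regrouping via the decomposition above gives
\[
\delta(C,x)
\,=\, \sum_{z \in \bar p^{-1}(x)} \delta(\overline{C}, z)
\,=\, \sum_{y \in p^{-1}(x)} \sum_{w \in \overline{p_{k'}}^{-1}(y)} \delta(\overline{C_{k'}}, w)
\,=\, \sum_{y \in p^{-1}(x)} \delta(C_{k'}, y),
\]
which is the claim.

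The main obstacle is the compatibility of normalization with the base change from $k^{\alg}$ to $(k')^{\alg}$; once this is established, the rest is routine dimension counting combined with the bijection $z \leftrightarrow w_z$ on closed points. This compatibility works precisely because $k^{\alg}$ is perfect, which makes the normalization of a reduced curve smooth, hence geometrically normal, so base change does not introduce new non-normal loci.
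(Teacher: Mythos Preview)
Your argument is correct. The paper itself omits the proof of this proposition entirely, stating only that it ``can be verified immediately'' from the definitions, so there is no proof in the paper to compare against; your write-up simply fills in the details the authors left to the reader, and does so along the expected lines (compatibility of normalization with base change over a perfect field, plus the bijection on closed points induced by $k^{\alg}\hookrightarrow(k')^{\alg}$).
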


\begin{prop}
 \label{Proposition:BirationalDeltaInvariant}
 Let $C$ and $C'$ be two geometrically reduced curves over a field $k$.
 Let $f \colon C' \to C$ be a finite morphism over $k$ and assume that there
exists an open dense subset $U \subset C$ such that the restriction
$f|_{f^{-1}(U)} \colon f^{-1}(U) \to U$
is an isomorphism.
 If $x \in C$ is a closed point, then
$\delta(C, x) = \dim_k (f_\ast\O_{C'}/\O_C)_x + \sum_{y\in f^{-1}(x)} \delta(C', y)$.
\end{prop}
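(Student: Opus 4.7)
The plan is to pass to $k^{\mathrm{alg}}$, where the normalizations of $C$ and $C'$ coincide, establish an additivity formula from a short exact sequence of sheaves, and then sum over the fiber of $p$ above $x$ to descend back to $k$. Set $\overline{C} := C\otimes_k k^{\mathrm{alg}}$ and $\overline{C}' := C'\otimes_k k^{\mathrm{alg}}$, with natural morphisms $p\colon \overline{C}\to C$, $p'\colon \overline{C}'\to C'$, and let $\bar f\colon \overline{C}'\to \overline{C}$ be the base change of $f$. By geometric reducedness, $\overline{C}$ and $\overline{C}'$ are reduced, and $\bar f$ is finite and restricts to an isomorphism over the dense open $U\otimes_k k^{\mathrm{alg}}$. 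Hence $\bar f$ is birational on each irreducible component, and the normalization $\pi'\colon N\to\overline{C}'$ serves simultaneously as a normalization of $\overline{C}$ through $\pi := \bar f\circ\pi'\colon N\to\overline{C}$.

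The inclusions $\O_{\overline{C}}\hookrightarrow \bar f_*\O_{\overline{C}'}\hookrightarrow \pi_*\O_N$ yield the short exact sequence
\[
0\longrightarrow \bar f_*\O_{\overline{C}'}/\O_{\overline{C}}\longrightarrow \pi_*\O_N/\O_{\overline{C}}\longrightarrow \pi_*\O_N/\bar f_*\O_{\overline{C}'}\longrightarrow 0.
\]
Because $\pi_*\O_N = \bar f_*\pi'_*\O_N$ and $\bar f_*$ is exact for the finite morphism $\bar f$, the rightmost term equals $\bar f_*(\pi'_*\O_N/\O_{\overline{C}'})$. Taking the stalk at a closed point $y\in \overline{C}$ and counting $k^{\mathrm{alg}}$-dimensions gives
\[
\delta(\overline{C},y) \,=\, \dim_{k^{\mathrm{alg}}}\!\left(\bar f_*\O_{\overline{C}'}/\O_{\overline{C}}\right)_y \,+\, \sum_{z\in \bar f^{-1}(y)}\delta(\overline{C}',z).
\]

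Now sum this identity over $y\in p^{-1}(x)$. By Definition \ref{Definition:DeltaInvariant}(2), the left side becomes $\delta(C,x)$. Using the commutative square $p\circ\bar f = f\circ p'$, the double sum $\sum_{y\in p^{-1}(x)}\sum_{z\in\bar f^{-1}(y)}\delta(\overline{C}',z)$ reindexes to $\sum_{x'\in f^{-1}(x)}\sum_{z\in (p')^{-1}(x')}\delta(\overline{C}',z) = \sum_{x'\in f^{-1}(x)}\delta(C',x')$. For the remaining term, set $A := \O_{C,x}$ and $B := (f_*\O_{C'})_x$, so that $(f_*\O_{C'}/\O_C)_x = B/A$ is an Artinian $A$-module, killed by a power of $\idealm_A$. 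Consequently $(B/A)\otimes_k k^{\mathrm{alg}}$ is supported on the fiber $p^{-1}(x)$ in $\Spec(A\otimes_k k^{\mathrm{alg}})$, and therefore decomposes as $\bigoplus_{y\in p^{-1}(x)}(\bar f_*\O_{\overline{C}'}/\O_{\overline{C}})_y$; comparing $k^{\mathrm{alg}}$-dimensions gives $\dim_k(f_*\O_{C'}/\O_C)_x = \sum_{y\in p^{-1}(x)}\dim_{k^{\mathrm{alg}}}(\bar f_*\O_{\overline{C}'}/\O_{\overline{C}})_y$. Assembling these pieces produces the claimed identity. The only real technical subtlety is this final base-change splitting of the conductor sheaf, which is routine—an Artinian module over $A\otimes_k k^{\mathrm{alg}}$ splits along its finitely many maximal ideals lying above $\idealm_A$, entirely parallel to the argument behind Proposition \ref{Proposition:JacobianNumberEtaleCovering}.
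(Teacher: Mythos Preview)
Your proof is correct and follows the natural route implicit in the paper's treatment: the paper omits the argument entirely (grouping this proposition among results ``which can be verified immediately''), and your reduction to $k^{\mathrm{alg}}$ via Definition \ref{Definition:DeltaInvariant}(2), followed by the short exact sequence coming from $\O_{\overline{C}}\hookrightarrow \bar f_*\O_{\overline{C}'}\hookrightarrow \pi_*\O_N$, is precisely the intended verification. The final base-change step identifying $\dim_k(f_*\O_{C'}/\O_C)_x$ with the sum of $k^{\mathrm{alg}}$-dimensions over $p^{-1}(x)$ is handled correctly and is indeed parallel to Proposition \ref{Proposition:JacobianNumberEtaleCovering}.
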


We give a sufficient criterion in terms of
$\delta$-invariants for the smoothness of the normalization of a curve over a possibly imperfect field.

\begin{thm}
\label{Theorem:TateGenusChange}
 Let $C$ be a regular and geometrically integral curve over
 a (possibly imperfect) field $k$ of characteristic $p > 0$.
 We put $\overline{C}:=C \otimes_k k^{\alg}$.
 Assume that $\delta(\overline{C}, x)<(p-1)/2$ for every closed point $x \in \overline{C}$.
 Then $C$ is smooth over $k$.
\end{thm}

\begin{proof}
By Proposition \ref{Proposition:DeltaInvariantEtaleCovering}, after replacing $k$ by its finite separable extension, we may assume that $\delta(C, x)<(p-1)/2$ for every closed point $x \in C$.
We choose a finite extension $k'/k$, such that the normalization $\widetilde{C_{k'}}$ of $C_{k'}$ is smooth over $k'$.
We have to show that $\O_{C_{k'}, x}$ is regular for every closed point $x \in C_{k'}$.
We fix a closed point $x \in C_{k'}$ and set $A:=\O_{C_{k'}, x}$.
Let $B$ be the normalization of $A$, which is a finite semi-local $A$-module.
We will use the same notation as in the proof of Proposition \ref{Proposition:JacobianNumberFiniteMorphism}.
The conductor ideal $I \subset A$ is defined by the image of the map
$h \colon \Hom_A(B, A) \to A$, $\phi \mapsto \phi(1)$.
It turns out that $I$ is an ideal of $B$.
As in the proof of Proposition \ref{Proposition:JacobianNumberFiniteMorphism}, we have
\[ \length_A (\Coker(h)) = \length_A (B/A). \]
Since $A/I$ is isomorphic to $\Coker(h)$ as an $A$-module, we have
\[ \length_A (A/I) = \length_A (B/A). \]
By the following short exact sequence of $A$-modules
\[ 0 \to A/I \to B/I \to B/A \to 0, \]
we have
\[ \length_A (B/I) = 2\cdot\length_A (B/A). \]
If $A$ is not regular, we have $\length_A (B/I) \geq p-1$ by \cite[Theorem 1.2]{PatakfalviWaldron}.
This implies
\[ \dim_{k'}(B/A)=[\kappa(x):k']\cdot\length_A (B/A) \geq (p-1)/2. \]
Since $\widetilde{C_{k'}}$ is smooth,
we have
\[ \delta(C_{k'}, x)=\dim_{k'}(B/A) \]
by Proposition \ref{Proposition:DeltaInvariantNormalizationSmooth}.
Thus, we find
$\delta(C_{k'}, x) \geq (p-1)/2$.
This contradicts the assumption by Proposition \ref{Proposition:DeltaInvariantEtaleCovering}.
Hence $A$ is regular.
\end{proof}

\begin{rem}
Theorem \ref{Theorem:TateGenusChange} is in some sense optimal;
see Lemma \ref{Lemma:QuasiHyperelliptic} for the construction of
a non-smooth regular curve over an imperfect field
which has a singular point of geometric $\delta$-invariant $(p-1)/2$.
\end{rem}

\begin{rem}
Theorem \ref{Theorem:TateGenusChange}
is a classical result of Tate \cite{Tate} if the sum over all $\delta$-invariants of $\overline{C}$ is strictly less than $(p-1)/2$; see also \cite{Schroeer}.
Thus, our result is a slight improvement over Tate's result.
In \cite{Schroeer:GenusChange}, Schr\"oer gave a simple proof of Tate's theorem.
Our proof, which is in terms of the \textit{local} $\delta$-invariants $\delta(\overline{C},x)$,
relies ideas from work of Patakfalvi and Waldron \cite{PatakfalviWaldron}.
\end{rem}

The upper semicontinuity of geometric $\delta$-invariants is presumably well-known to the experts.
The following is all we need.

\begin{prop}
\label{Proposition:UpperSemicontinuityDeltaInvariant}
Let $U$ be a Noetherian integral scheme, and let $\eta \in U$ be the generic point.
Let $\pi \colon \mathscr{C} \to U$ be a flat family of
proper and geometrically reduced curves parameterized by $U$ such that the generic fiber $\mathscr{C}_\eta$ is geometrically irreducible over $\kappa(\eta)$.
Let $u_0 \in U$ be a closed point,
let $N$ be a non-negative integer, 
and assume that the geometric $\delta$-invariants of $\mathscr{C}_{u_0}$ 
are smaller than or equal to $N$ at every closed point.
Then, there exists a non-empty open subset
$U'\subset U$  
such that for every point $x\in U'$
(not necessarily closed) the geometric $\delta$-invariants of the curve
$\mathscr{C}_x \otimes_{\kappa(x)} \kappa(x)^{\rm{sep}}$
over $\kappa(x)^{\rm{sep}}$ are smaller than or equal to 
$N$ at every closed point.
\end{prop}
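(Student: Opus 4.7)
The plan is to parallel the argument of Proposition~\ref{Proposition:UpperSemicontinuityJacobianNumbers} with the conductor module replacing the Fitting quotient.

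First, I would reduce to a convenient geometric setting. Since $\mathscr{C}_{\eta}$ is geometrically irreducible and $\pi$ has geometrically reduced fibers, after shrinking $U$ I may assume every fiber is geometrically irreducible. I would then take the relative normalization $f \colon \widetilde{\mathscr{C}} \to \mathscr{C}$ and shrink $U$ further so that $\widetilde{\pi} := \pi \circ f \colon \widetilde{\mathscr{C}} \to U$ is flat and smooth, and so that for every $u \in U$ the geometric fiber $\widetilde{\mathscr{C}}_{u} \otimes_{k(u)} \overline{k(u)}$ is the normalization of $\bar{\mathscr{C}}_u := \mathscr{C}_u \otimes_{k(u)} \overline{k(u)}$. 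This is possible because on a dense open of $U$ normalization commutes with base change; the subtle point is that over imperfect residue fields the relative normalization can be regular but not smooth, and one must shrink $U$ to avoid this pathology.

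Next, consider the coherent sheaf
\[
\mathscr{Q} \,:=\, f_{\ast} \O_{\widetilde{\mathscr{C}}}/\O_{\mathscr{C}}
\]
on $\mathscr{C}$. Its support $Z$ is contained in the non-smooth locus of $\pi$, hence has finitely many closed points on every fiber, so $Z \to U$ is quasi-finite. Since $\pi$ is proper, $Z \to U$ is in fact finite, and $\pi_{\ast}\mathscr{Q}$ is a coherent $\O_U$-module. Using the smoothness arranged in the first step together with Proposition~\ref{Proposition:DeltaInvariantNormalizationSmooth} and Proposition~\ref{Proposition:DeltaInvariantEtaleCovering}, the fiberwise dimensions of $\pi_{\ast}\mathscr{Q}$ compute the sum of geometric $\delta$-invariants of $\mathscr{C}_u$.

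With this in hand, I would run the Nakayama argument of Proposition~\ref{Proposition:UpperSemicontinuityJacobianNumbers} verbatim. Replacing $U$ by an \'etale neighborhood of $u_0$ arising from the strict Henselization, one may assume each connected component of $Z \times_U \Spec \O_{U, u_0}$ has a unique point above $u_0$. For each such component $W$ with $s \in W$ above $u_0$ and $t_1, \dots, t_n$ its intersections with the generic fiber, Nakayama's lemma applied to $B := \Gamma(W, \O_W)$ as a finite $A := \O_{U, u_0}$-module yields
\[
\sum_{j=1}^{n} \delta(\mathscr{C}_{\eta}, t_j) \,\leq\, \delta(\mathscr{C}_{u_0}, s) \,\leq\, N.
\]
Summing over connected components shows $\delta(\mathscr{C}_\eta, t) \leq N$ at every closed point $t$ of the generic fiber. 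Finally, as in Proposition~\ref{Proposition:UpperSemicontinuityJacobianNumbers}, after a further shrinking of $U$ so that $Z \to U$ is flat and the closures $\overline{\{t_j\}}$ are pairwise disjoint, the same length computation propagates the bound from the generic fiber to every point $x$ of the resulting open $U' \subset U$; the separable-closure statement is then handled by Proposition~\ref{Proposition:DeltaInvariantEtaleCovering}.

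The main obstacle is the very first step: arranging a ``simultaneous normalization'' compatible with the formation of geometric fibers. In positive characteristic this can fail at points of $U$ with imperfect residue field, where a fiber of $\widetilde{\mathscr{C}}$ may be regular but not geometrically reduced; however such points form a proper closed subset and can be avoided by shrinking $U$. After that, the Nakayama/strict-Henselization template of Proposition~\ref{Proposition:UpperSemicontinuityJacobianNumbers} applies essentially verbatim.
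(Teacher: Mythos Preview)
Your overall plan---replace the Fitting quotient by the conductor sheaf $\mathscr{Q}=f_\ast\O_{\widetilde{\mathscr{C}}}/\O_{\mathscr{C}}$ and rerun the Nakayama argument---is exactly the right idea, and it is what the paper does. However, the first step as you state it does not work. You propose to \emph{shrink} $U$ so that the relative normalization $\widetilde{\mathscr{C}}\to U$ becomes smooth. But $\widetilde{\mathscr{C}}_\eta$ is the normalization of $\mathscr{C}_\eta$, which is a regular curve over $k(\eta)$; since $k(\eta)$ is typically imperfect, this curve need not be smooth over $k(\eta)$, and no Zariski open subset of $U$ changes that. A second, related issue: even where such an open set exists, there is no reason it contains $u_0$, yet your Nakayama step (``replacing $U$ by an \'etale neighborhood of $u_0$'') presupposes that $u_0$ survives the shrinking. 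Without $u_0$ you cannot invoke the hypothesis at all. (A minor point: in the Nakayama step you should apply the lemma to the finite $A$-module $\Gamma(W,\mathscr{Q})$, not to $\Gamma(W,\O_W)$; the structure sheaf of $Z$ does not compute $\delta$-invariants.)

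The paper repairs both problems by using \emph{base change} rather than shrinking. For the comparison with $u_0$, it reduces (via Proposition~\ref{Proposition:DeltaInvariantEtaleCovering}) to the case where $U=\Spec A$ for a complete DVR with closed point $u_0$, and then replaces $A$ by a finite extension so that the normalizations of both fibers $\mathscr{C}_\eta$ and $\mathscr{C}_{u_0}$ are smooth. The key extra step is that the special fiber $(\widetilde{\mathscr{C}})_{u_0}$ of the global normalization is only shown to be \emph{reduced} (via the $1$-cycle identity $\pi_\ast[(\widetilde{\mathscr{C}})_{u_0}]=[\mathscr{C}_{u_0}]$ and absence of embedded points), so that $\widetilde{\mathscr{C}_{u_0}}\to(\widetilde{\mathscr{C}})_{u_0}\to\mathscr{C}_{u_0}$ factors and one obtains the inequality $\dim_{k(u_0)}(\mathscr{Q}|_{\mathscr{C}_{u_0}})_x\le\delta(\mathscr{C}_{u_0},x)$, which is the direction needed for Nakayama. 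For the spreading step, the paper again does not shrink but passes to a flat finite-type cover $U''\to U$ over which the normalization becomes smooth; since flat maps are open this suffices to produce the desired open $U'\subset U$.
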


\begin{proof}
First, we show that the geometric $\delta$-invariants of the curve
$\mathscr{C}_\eta \otimes_{\kappa(\eta)} \kappa(\eta)^{\rm{sep}}$
over $\kappa(\eta)^{\rm{sep}}$ are smaller than or equal to 
$N$ at every closed point.
By Proposition \ref{Proposition:DeltaInvariantEtaleCovering}, we may assume that $U$ is the spectrum of a complete discrete valuation ring $A$, whose residue field corresponds to the closed point $u_0$ of $U$.
Moreover, after replacing $A$ by a finite extension, we may assume that the normalization of $\mathscr{C}_\eta$ (resp.\ $\mathscr{C}_{u_0}$) is smooth over $\kappa(\eta)$ (resp.\ $\kappa(u_0)$); see \cite[Proposition 17.15.14]{EGA4-4}.
Let $\pi \colon \widetilde{\mathscr{C}} \to \mathscr{C}$ be the normalization morphism.
As in the proof of Lemma \ref{Lemma:StableMapFamilyImage}, we have the following equality of $1$-cycles on $\mathscr{C}$:
\[ \pi_{\ast}[(\widetilde{\mathscr{C}})_{u_0}]=[\mathscr{C}_{u_0}]; \]
see \cite[Theorem 7.2.18]{Liu:Book}.
From this equality, we see that $(\widetilde{\mathscr{C}})_{u_0}$ is generically reduced.
Since $(\widetilde{\mathscr{C}})_{u_0}$ has no embedded points by \cite[Proposition 7.2.15 and Corollary 7.2.22]{Liu:Book}, it follows that $(\widetilde{\mathscr{C}})_{u_0}$ is reduced.
Let now $\widetilde{\mathscr{C}_{u_0}}$ be the normalization of $\mathscr{C}_{u_0}$.
By the above equality again, the normalization morphism factors as
\[ \widetilde{\mathscr{C}_{u_0}} \to (\widetilde{\mathscr{C}})_{u_0} \to  \mathscr{C}_{u_0}. \]
Thus, we have 
\[ \dim_{\kappa(u_0)}(\mathscr{F}\vert_{\mathscr{C}_{u_0}})_x \leq \delta(\mathscr{C}_{u_0}, x) \leq N \]
for every closed point $x \in \mathscr{C}_{u_0}$.
By considering a closed subscheme $i_{Z} \colon Z \hookrightarrow \mathscr{C}$ such that $\mathscr{F}$ comes from a coherent sheaf $\mathscr{F}_Z$ on $Z$ and $Z=\mathrm{Supp}(\mathscr{F})$, the similar arguments as in the proof of Proposition \ref{Proposition:UpperSemicontinuityJacobianNumbers} show that the claim is true.

Next, we show that the just established result implies the existence of an open subset $U' \subset U$ as in the assertion.
There is a flat morphism of finite type $f \colon U'' \to U$, such that the normalization of $\mathscr{C} \times_{U} U''$ is smooth over $U''$.
Since $f$ is an open map, we may assume that the normalization $\widetilde{\mathscr{C}}$ of $\mathscr{C}$ is smooth over $U$.
Let $\pi \colon \widetilde{\mathscr{C}} \to \mathscr{C}$ be the normalization morphism.
Now, by considering $\mathscr{F}:=\pi_{\ast}\O_{\widetilde{\mathscr{C}}}/\O_{\mathscr{C}}$ and a closed subscheme $i_{Z} \colon Z \hookrightarrow \mathscr{C}$ as above, similar arguments as in the proof of Proposition \ref{Proposition:UpperSemicontinuityJacobianNumbers} show the existence of an open subset $U' \subset U$ as desired.
\end{proof}

\section{The key lemma}
\label{Section:KeyLemma}
In this section, we prove a lemma,
which is used in the proofs of Theorem \ref{MainTheorem2} 
and Theorem \ref{MainTheorem1}.
This lemma is the technical heart of this article.

\begin{lem}
\label{KeyLemma}
Let $k$ be an algebraically closed field $k$ of characteristic $p>0$.
Let $X$ be a smooth, proper, and connected variety $X$ over $k$ with $\dim(X) \geq 2$
that is dominated by a map from a family of rational curves,
i.e., there exists a pair $(\pi,\varphi)$ as in Definition \ref{Definition:FamilyRationalCurve} (iv)
such that $\dim(U)=\dim(X)-1$ and $\varphi \colon \mathscr{C} \to X$ is dominant.
Assume moreover that $\mathscr{C}$ and $U$ are normal.
Then, after possibly shrinking $U$,
there exists a commutative diagram
\[
\xymatrix{
\mathscr{C}  \ar^-{}[r] \ar@/^16pt/[rr]^-{\varphi} \ar[d]_-{\pi} &
\mathscr{C}' \ar[d]_-{s} \ar[r]^-{} & X \\
U \ar[r]^-{t} & U'
}
\]
satisfying the following conditions:
\begin{enumerate}
\item $\mathscr{C}'$ and $U'$ are normal and connected varieties over $k$.
\item $s \colon \mathscr{C}' \to U'$ is a proper flat morphism, and $t \colon U \to U'$ is a finite morphism.
\item $\mathscr{C} \to \mathscr{C}'$ is a finite morphism.
\item $k(\mathscr{C}')$ is the separable closure of $k(X)$ in $k(\mathscr{C})$.
\item $k(U')$ is algebraically closed in $k(\mathscr{C}')$.
\item For every closed point $u' \in U'$,
$s^{-1}(u')_{\mathrm{red}}$ is a (possibly singular) rational curve.
\end{enumerate}
\end{lem}

\begin{lem}
\label{Lemma:Radicial}
Let $X$ and $Y$ be integral schemes of characteristic $p>0$, and let $f \colon X \to Y$ be a finite and dominant morphism.
Assume that $Y$ is normal and that $f$ is purely inseparable, i.e.,
the finite extension $k(X)/k(Y)$ of function fields induced by $f$
is purely inseparable. 
Then $f$ is radicial.
\end{lem}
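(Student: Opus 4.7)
My plan is to reduce to the affine case and then exploit the normality of $Y$ together with pure inseparability of the function field extension to show both set-theoretic injectivity and pure inseparability of all residue field extensions, which together are equivalent to $f$ being radicial.

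First I would work locally: since $f$ is finite, it is affine, so I may replace $Y$ by $\Spec B$ and $X$ by $\Spec A$, where $B$ is a normal integral domain, $A$ is an integral domain containing $B$ as a subring, $A$ is a finite $B$-module, and $\Frac(A)/\Frac(B)$ is purely inseparable. The key algebraic observation is then the following. For any $a\in A$, since $\Frac(A)/\Frac(B)$ is purely inseparable, there exists $n\geq0$ such that $a^{p^n}\in\Frac(B)$. On the other hand, $a^{p^n}$ is integral over $B$ because $a$ is. Since $B$ is normal, this forces $a^{p^n}\in B$.

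Using this, set-theoretic injectivity is immediate: given a prime $\mathfrak{p}\subset B$ and two primes $\mathfrak{q}_1,\mathfrak{q}_2\subset A$ lying over $\mathfrak{p}$, for any $a\in A$ choose $n$ with $a^{p^n}\in B$; then $a\in\mathfrak{q}_i$ iff $a^{p^n}\in\mathfrak{q}_i\cap B=\mathfrak{p}$, independently of $i$, so $\mathfrak{q}_1=\mathfrak{q}_2$. For the residue fields, the inclusion $B\hookrightarrow A$ descends to $B/\mathfrak{p}\hookrightarrow A/\mathfrak{q}$, and any class $\bar a\in A/\mathfrak{q}$ satisfies $\bar a^{p^n}\in B/\mathfrak{p}$ by the same argument, so $k(\mathfrak{q})/k(\mathfrak{p})$ is purely inseparable.

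Finally, I would conclude by recalling that a morphism is radicial precisely when it is injective on underlying sets and induces purely inseparable residue field extensions at every point (equivalently, $X(K)\to Y(K)$ is injective for every field $K$). Both conditions have just been verified. There is no real obstacle here; the only subtle point is the interplay between normality and pure inseparability in the key step, but once one writes $a^{p^n}\in\Frac(B)$ the proof becomes essentially formal.
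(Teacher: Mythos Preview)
Your proposal is correct and follows essentially the same route as the paper: reduce to the affine case, use pure inseparability together with normality of $B$ to conclude that suitable $p$-powers of elements of $A$ lie in $B$, and from this deduce both injectivity on points and pure inseparability of residue field extensions. The only cosmetic difference is that the paper fixes a single exponent $e$ with $A^{p^e}\subset B$ (using finiteness of $k(X)/k(Y)$), whereas you allow the exponent to depend on the element; this changes nothing substantive.
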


\begin{proof}
We may assume that $X$ and $Y$ are affine, say, $X := \Spec A$ and $Y := \Spec B$.
Since the extension $k(X)/k(Y)$ is finite and purely inseparable, 
there exists a positive integer $e \geq 1$, such that $k(X)^{p^e} \subset k(Y).$
Since $A$ is integral over $B$ and $B$ is normal, we have
$A^{p^e} \subset B$.
Let $\mathfrak{q}$ be a prime ideal of $B$.
It follows that $\mathfrak{p}:=\sqrt{\mathfrak{q}A}$ is the unique prime 
ideal of $A$ above $\mathfrak{q}$. 
Hence, $\Spec A \to \Spec B$ is bijective.
Let $\kappa(\mathfrak{p})$ and $\kappa(\mathfrak{q})$ be the residue fields of $\mathfrak{p}$ and
$\mathfrak{q}$, respectively.
Since $A^{p^e} \subset B$, we have $\kappa(\mathfrak{p})^{p^e} \subset \kappa(\mathfrak{q})$,
and the extension $\kappa(\mathfrak{p})/\kappa(\mathfrak{q})$ is purely inseparable. 
This concludes that $f$ is radicial.
\end{proof}

Now, we shall prove Lemma \ref{KeyLemma}.
With the assumptions and notations as in Lemma \ref{KeyLemma},
we can compactify $U$ and $\mathscr{C}$ compatibly by the following claim.

\begin{claim}
\label{Claim:Family1}
There exists a commutative diagram
\[
\xymatrix{
\mathscr{C} \ar@{^{(}->}[r] \ar[d]_-{\pi} \ar@/^16pt/[rr]^-{\varphi} &
\overline{\mathscr{C}} \ar[d]_-{\overline{\pi}} \ar[r]_-{\overline{\varphi}} & X \\
U \ar@{^{(}->}[r] & \overline{U}
}
\]
satisfying the following conditions:
\begin{enumerate}
\item $\overline{\mathscr{C}}$ is a normal, proper, and connected variety over $k$
and $\mathscr{C} \subset \overline{\mathscr{C}}$ is an open subset.
\item $\overline{U}$ is a normal, proper, and connected variety over $k$
and $U \subset \overline{U}$ is an open subset.
\end{enumerate}
\end{claim}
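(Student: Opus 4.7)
The plan is to apply Nagata's compactification theorem twice---first to the base $U$, then to the morphism $(\pi,\varphi)$ with target $\overline{U}\times X$---and to normalize after each step. The main tool is the relative form of Nagata's theorem; no real difficulty is expected beyond routine bookkeeping.

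First, I would compactify $U$. By Nagata's compactification theorem, the integral variety $U$ admits an open immersion $U \hookrightarrow \overline{U}_0$ into a proper $k$-scheme. Replacing $\overline{U}_0$ by the schematic closure of the image of $U$, we may assume $\overline{U}_0$ is integral. Let $\overline{U}$ be the normalization of $\overline{U}_0$ in $k(U)$. Since $U$ is already normal, $U$ remains an open subscheme of $\overline{U}$, and $\overline{U}$ is a normal, proper, connected variety over $k$.

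Next, I would compactify $\mathscr{C}$ over $\overline{U}\times X$. Because $\mathscr{C}$ is integral (being a flat family of integral curves over the integral base $U$), the morphism
\[
\Phi \,:=\, (\pi, \varphi) \colon \mathscr{C} \,\longrightarrow\, \overline{U} \times X
\]
is separated and of finite type, and its target is proper over $k$. By the relative version of Nagata's theorem, $\Phi$ factors as an open immersion $\mathscr{C} \hookrightarrow \widetilde{\mathscr{C}}$ followed by a proper morphism $\widetilde{\mathscr{C}} \to \overline{U} \times X$. Since $\overline{U}\times X$ is proper over $k$, so is $\widetilde{\mathscr{C}}$. Replacing $\widetilde{\mathscr{C}}$ by the schematic closure of $\mathscr{C}$ in it, we may assume $\widetilde{\mathscr{C}}$ is integral with function field $k(\mathscr{C})$; let $\overline{\mathscr{C}}$ be the normalization of $\widetilde{\mathscr{C}}$. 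Then $\overline{\mathscr{C}}$ is normal, proper and connected over $k$, and since $\mathscr{C}$ was already normal, $\mathscr{C}$ is an open subscheme of $\overline{\mathscr{C}}$.

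The required morphisms $\overline{\pi}\colon \overline{\mathscr{C}}\to\overline{U}$ and $\overline{\varphi}\colon \overline{\mathscr{C}}\to X$ are obtained by composing $\overline{\mathscr{C}}\to\widetilde{\mathscr{C}}\to\overline{U}\times X$ with the two projections; they restrict to $\pi$ and $\varphi$ on $\mathscr{C}$ by construction, so the diagram commutes. The whole argument is routine once Nagata's compactification theorem is available, and there is no serious obstacle.
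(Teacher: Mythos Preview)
Your proof is correct and follows essentially the same approach as the paper: Nagata compactification together with normalization. The only cosmetic difference is that the paper first compactifies $\mathscr{C}$, then uses the graph closure in $\overline{\mathscr{C}}\times X$ to extend $\varphi$, takes a normal compactification $\overline{U}$ of $U$, and finally uses the graph closure in $\overline{U}\times\overline{\mathscr{C}}$ (with a subsequent normalization) to extend $\pi$; you instead compactify $U$ first and then apply relative Nagata to $(\pi,\varphi)\colon\mathscr{C}\to\overline{U}\times X$ in a single step, which is a mild streamlining of the same idea.
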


\begin{proof}
Choose a compactification $\overline{\mathscr{C}} \supset \mathscr{C}$. 
Replacing $\overline{\mathscr{C}}$ by
the Zariski closure of the image of
$\mathscr{C} \to  \overline{\mathscr{C}}\times X$, 
we may assume that $\varphi$ extends to a morphism $\overline{\varphi} \colon \overline{\mathscr{C}} \to X$. 
Take a normal compactification $\overline{U} \supset U$.
Replacing $\overline{\mathscr{C}}$ by
the normalization of the Zariski closure of the image of
$\mathscr{C} \to \overline{U} \times \overline{\mathscr{C}}$,
we may assume that $\overline{\mathscr{C}}$ is normal and that
$\pi$ extends to a morphism $\overline{\pi} \colon \overline{\mathscr{C}} \to \overline{U}$.
\end{proof}

The next step is to shrink $U$ and to replace $\overline{\mathscr{C}}$ further in order 
to find a nice factorization
$\overline{\mathscr{C}} \to \overline{\mathscr{C}}' \to X$
of $\overline{\varphi}$.
The rough idea is to take a proper and normal model
of the separable closure of the function field $k(X)$ in $k(\overline{\mathscr{C}})$.
But the actual argument given below is more involved
because we also want to ensure that
the intermediate variety $\overline{\mathscr{C}}'$ admits
an open and dense subset $\mathscr{C}' \subset \overline{\mathscr{C}}'$
that is equipped with a fibration
$s \colon \mathscr{C}' \to U'$
over a normal and connected variety $U'$ such that for every closed point $u' \in U'$,
the reduced closed subscheme $s^{-1}(u')_{\mathrm{red}}$ of
the fiber $s^{-1}(u')$ is a (possibly singular) rational curve.
The delicate point is that such a fibration might not exist if we
start from an arbitrary normal and proper model.

Let $\overline{\mathscr{C}}'$ be the normalization of $X$ in
the separable closure of $k(X)$ in $k(\overline{\mathscr{C}})$.
We denote by $\overline{\varphi}' \colon \overline{\mathscr{C}} \to \overline{\mathscr{C}}'$ and
$\overline{\varphi}'' \colon \overline{\mathscr{C}}' \to X$ the induced morphisms.
Then, we have
\[ \overline{\varphi} = \overline{\varphi}'' \circ \overline{\varphi}'. \]
The morphism $\overline{\varphi}''$ is finite and $k(\overline{\mathscr{C}}')/k(X)$ is separable.
On the other hand, $\overline{\varphi}'$ is a generically finite and proper morphism
and $k(\overline{\mathscr{C}})/k(\overline{\mathscr{C}}')$ is purely inseparable.

The morphism $\overline{\varphi}'$ might not be flat.
We now modify $\overline{\mathscr{C}}$ and $\overline{\mathscr{C}}'$
to obtain a flat morphism as follows:
we apply the flattening theorem of Raynaud-Gruson
\cite[Th\'eor\`eme 5.2.2]{RaynaudGruson}.
(See also \cite[Section 2.19]{deJong:Alteration}.)
Then, we obtain a proper and birational morphism
$\overline{g}' \colon \overline{\mathscr{C}}'_2 \to \overline{\mathscr{C}}'$
such that the strict transform
$\overline{\mathscr{C}}_2 \subset \overline{\mathscr{C}} \times_{\overline{\mathscr{C}}'} \overline{\mathscr{C}}'_2$
is \textit{flat} over $\overline{\mathscr{C}}'_2$.
We denote by
$\overline{\varphi}'_2 \colon \overline{\mathscr{C}}_2 \to \overline{\mathscr{C}}'_2$ and
$\overline{g} \colon \overline{\mathscr{C}}_2 \to \overline{\mathscr{C}}$ the induced morphisms.
Then, the following diagram commutes:
\[
\xymatrix{
\overline{\mathscr{C}}_2 \ar^-{\overline{\varphi}'_2}[r] \ar[d]_-{\overline{g}} & \overline{\mathscr{C}}'_2 \ar[d]_-{\overline{g}'} \\
\overline{\mathscr{C}} \ar[r]^-{\overline{\varphi}'} & \overline{\mathscr{C}}' \ar[r]^-{\overline{\varphi}''} & X.
}
\]
Here, $\overline{\varphi}'_2 \colon \overline{\mathscr{C}}_2 \to \overline{\mathscr{C}}'_2$
is \textit{finite} because it is proper, flat, and generically finite.

The varieties $\overline{\mathscr{C}}_2$ and $\overline{\mathscr{C}}'_2$ might not be normal.
Passing to normalizations, we find normal and proper connected varieties
$\overline{\mathscr{C}}_3$ and $\overline{\mathscr{C}}'_3$
and a morphism
$\overline{\psi} \colon \overline{\mathscr{C}}_3 \to \overline{\mathscr{C}}'_3$
over $k$ and we obtain the following commutative diagram:
\[
\xymatrix{
\overline{\mathscr{C}}_3 \ar^-{\overline{\psi}}[r] \ar[d]_-{\overline{h}} & \overline{\mathscr{C}}'_3 \ar[d]_-{\overline{h}'} \\
\overline{\mathscr{C}}_2 \ar^-{\overline{\varphi}'_2}[r] \ar[d]_-{\overline{g}} & \overline{\mathscr{C}}'_2 \ar[d]_-{\overline{g}'} \\
\overline{\mathscr{C}} \ar[r]^-{\overline{\varphi}'} & \overline{\mathscr{C}}' \ar[r]^-{\overline{\varphi}''} & X.
}
\]
Here, $\overline{h}$ and $\overline{h}'$ are proper birational morphisms.
Since $\overline{\varphi}'_2 \circ \overline{h}$ is finite and $\overline{h}'$ is separated, the morphism $\overline{\psi}$ is finite.
Let us summarize the situation:
\begin{enumerate}
\item $\overline{\mathscr{C}},\overline{\mathscr{C}}',\overline{\mathscr{C}}_3,\overline{\mathscr{C}}'_3$ are normal, proper, and connected varieties over $k$.
\item $\overline{g}, \overline{g}', \overline{h}, \overline{h}'$ are proper birational morphisms.
\item $\overline{\varphi}'_2$ and $\overline{\psi}$ are finite morphisms.
\end{enumerate}

Since $\overline{g} \circ \overline{h}$
is an isomorphism outside a closed subset of codimension $\geq 2$ in $\overline{\mathscr{C}}$,
after removing its image in $\overline{U}$ from $U$,
we may assume that the restriction 
$\mathscr{C}_3:=(\overline{g} \circ \overline{h})^{-1}(\mathscr{C})
\to \mathscr{C}$
of $\overline{g} \circ \overline{h}$ is an isomorphism.
By Lemma \ref{Lemma:Radicial},
the morphism $\overline{\psi}$ is radicial.
Hence it is a homeomorphism.
The image $V' := \overline{\psi}(\mathscr{C}_3) \subset \overline{\mathscr{C}}'_3$
is open, and
the induced morphism
$
\overline{\psi} \colon \mathscr{C}_3 \to V'
$
is finite.
We obtain the following diagram:
\[
\xymatrix{
\mathscr{C} \cong \mathscr{C}_3
\ar^-{\overline{\psi}}[r] \ar[d]_-{\pi} & 
V'
\ar[rr]^-{\overline{\varphi}'' \circ \overline{g}' \circ \overline{h}'} && X \\
U
}
\]
Shrinking $U$ further, we may assume that $U$ is affine.
We set
$U' := \Spec H^0(V', \O_{V'})$.
By a lemma of Tanaka \cite[Lemma A.1]{Tanaka},
$U'$ is a normal connected variety over $k$
equipped with a proper surjective morphism
$s \colon V' \to U'$
and a finite surjective morphism $t \colon U \to U'$
such that the following diagram commutes:
\[
\xymatrix{
\mathscr{C} \cong \mathscr{C}_3
\ar^-{\overline{\psi}}[r] \ar[d]_-{\pi} & 
V'
\ar[rr]^-{\overline{\varphi}'' \circ \overline{g}' \circ \overline{h}'}
\ar[d]_-{s}
&& X \\
U \ar[r]^-{t} & U'
}
\]
By construction, we have 
$s_{\ast}\O_{V'} \cong \O_{U'}$.
It follows that
$k(U')$ is algebraically closed in $k(V')$.
For every closed point $u' \in U'$,
the scheme $s^{-1}(u')_{\mathrm{red}}$ is
a (possibly singular) rational curve because
it is dominated by a geometric fiber of $\pi$.
After possibly shrinking $U'$ further, we may assume $s$ is flat.
Putting $\mathscr{C}' := V'$, all the assertions are proved,
which establishes Lemma \ref{KeyLemma}.

\section{Proof of the main theorems}
\label{Section:ProofMainTheorem}

In this section, we will prove Theorem \ref{MainTheorem1} and Theorem \ref{MainTheorem2}.
First, we will prove Theorem \ref{MainTheorem2}, which actually follows from the following,
more general result for maps from a family of rational curves.

\begin{thm}
\label{MainTheoremMaps:HigherDimension}
Let $k$ be an algebraically closed field $k$ of characteristic $p>0$.
Let $X$ be a smooth, proper, and connected variety $X$ over $k$ with $\dim(X) \geq 2$.
Assume that there exists a pair $(\pi,\varphi)$
as in Definition \ref{Definition:FamilyRationalCurve} (iv)
satisfying the following conditions: 
\begin{enumerate}
\item $\dim(U)=\dim(X)-1$,
\item $\varphi \colon \mathscr{C} \to X$ is dominant, and
\item $k(\mathscr{C}) \cap k(X)^{\sep}$ is a separable extension of $k(U) \cap k(X)^{\sep}$.
\end{enumerate}
Moreover, we assume the pair $(\pi,\varphi)$
satisfies \textit{at least one} of the following conditions:
\begin{enumerate}
\setcounter{enumi}{3}
\item For every closed point $u \in U$, the $\delta$-invariants of $\varphi(\mathscr{C}_{u})$ are strictly less than $(p-1)/2$ at every closed point.
\item For every closed point $u \in U$,
$\varphi(\mathscr{C}_{u})$ is a local complete intersection rational curve on $X$, all of whose Jacobian numbers are strictly less than $p$ at every closed point.
\end{enumerate}
Then, $X$ is separably uniruled and thus, $X$ has negative Kodaira dimension.
\end{thm}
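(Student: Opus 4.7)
The plan is to execute the seven-step outline of Section \ref{Subsection:SketchProof}: use Lemma \ref{KeyLemma} to replace the inseparable dominant family $(\pi,\varphi)$ by a separable one while controlling the singularities of its fibers, and then invoke Theorem \ref{Theorem:TateGenusChange} (respectively Theorem \ref{MainTheorem3}) to force the generic geometric fiber of the new family to be smooth. First, I would reduce to the case where $\pi \colon \mathscr{C} \to U$ is genuinely a family of rational curves on $X$ in the sense of Definition \ref{Definition:FamilyRationalCurve}(3): replace $\mathscr{C}$ by the image $W := (\pi \times \varphi)(\mathscr{C}) \subset U \times X$ with its reduced induced structure, and use Lemma \ref{Lemma:StableMapFamilyImage}(1) together with generic flatness to shrink $U$ so that $W \to U$ is a flat family of integral rational curves whose closed fiber $W_u$ is identified with $\varphi(\mathscr{C}_u) \subset X$. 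After normalizing $\mathscr{C}$ and $U$ and shrinking further, Proposition \ref{Proposition:UpperSemicontinuityDeltaInvariant} (respectively Proposition \ref{Proposition:UpperSemicontinuityJacobianNumbers}) ensures that hypothesis (4) (respectively (5)) holds uniformly at every closed point of every closed fiber of the new $\pi$.

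Next, I would apply Lemma \ref{KeyLemma} to this modified family to obtain a commutative diagram
\[
\xymatrix{
\mathscr{C} \ar[r] \ar[d]_{\pi} & \mathscr{C}' \ar[d]_{s} \ar[r] & X \\
U \ar[r]^{t} & U'
}
\]
in which $\mathscr{C} \to \mathscr{C}'$ is finite and purely inseparable, $\mathscr{C}' \to X$ is separable and generically finite, $s$ is flat and proper, $k(U')$ is algebraically closed in $k(\mathscr{C}')$, and each $s^{-1}(u')_{\mathrm{red}}$ is an integral rational curve. Since $k(\mathscr{C}')$ is the separable closure of $k(X)$ in $k(\mathscr{C})$, condition (3) translates, via the embedding $k(U') \hookrightarrow k(\mathscr{C}')$, into separability of $k(\mathscr{C}')/k(U')$. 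Mac Lane's criterion, in the form of \cite[Theorem 7.1]{Badescu}, then forces the geometric generic fiber of $s$ to be reduced, so after shrinking $U'$ every geometric fiber of $s$ is itself an integral rational curve.

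Now I would transfer the singularity bounds to the fibers of $s$. For each closed $u \in U$ with $u' = t(u)$, the finite map $\mathscr{C}_u \to s^{-1}(u')$ is surjective onto the unique irreducible component, and the composition $\mathscr{C}_u \to s^{-1}(u') \to X$ is the closed immersion identifying $\mathscr{C}_u$ with $\varphi(\mathscr{C}_u)$; in particular, the induced $s^{-1}(u') \to \varphi(\mathscr{C}_u)$ is finite birational between reduced integral curves. Proposition \ref{Proposition:BirationalDeltaInvariant} (respectively Proposition \ref{Proposition:JacobianNumberFiniteMorphism}) then gives $\delta(s^{-1}(u'), x') \leq \delta(\varphi(\mathscr{C}_u), x)$ (respectively the analogue for $\jac$) at every $x' \mapsto x$. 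Applying Proposition \ref{Proposition:UpperSemicontinuityDeltaInvariant} (respectively Proposition \ref{Proposition:UpperSemicontinuityJacobianNumbers}) to $s$, the geometric generic fiber of $s$ inherits the strict bound $< (p-1)/2$ (respectively $< p$). Theorem \ref{Theorem:TateGenusChange} (respectively Theorem \ref{MainTheorem3}) then shows that the generic fiber of $s$ is smooth over $k(U')$; being a smooth, geometrically integral, genus-zero curve, after an \'etale base change and a further shrinking of $U'$ it becomes $\P^1_{k(U')}$, so $s$ restricts to a $\P^1$-bundle on a dense open of $U'$. Composing with the separable generically finite morphism $\mathscr{C}' \to X$ exhibits $X$ as separably uniruled, and Proposition \ref{Proposition:SeparablyUniruledPlurigenera}(1) delivers the negative Kodaira dimension.

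The main difficulty is already absorbed by Lemma \ref{KeyLemma}; after that, the most delicate point is the interplay between hypothesis (3) and the algebraic closure condition $k(U') \subset k(\mathscr{C}')$ that is needed for Mac Lane's criterion to apply, and the careful tracking of the singularity invariants across the finite birational factorization $\mathscr{C}_u \to s^{-1}(u') \to \varphi(\mathscr{C}_u)$. In case (5), one also has to ensure that the lci hypothesis persists on $s^{-1}(u')$ so that Proposition \ref{Proposition:JacobianNumberFiniteMorphism} and Theorem \ref{MainTheorem3} remain available; this is what the lci clause in (5) is there to guarantee.
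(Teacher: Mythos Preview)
Your proposal is correct and matches the paper's proof essentially step for step: pass to the image $W$, normalize, apply Lemma~\ref{KeyLemma}, use condition (3) together with Mac Lane's criterion to make the fibers of $s$ reduced, push the invariant bounds across the finite birational map $\mathscr{C}'_{u'}\to\varphi(\mathscr{C}_u)$ via Proposition~\ref{Proposition:BirationalDeltaInvariant}/\ref{Proposition:JacobianNumberFiniteMorphism}, and then use upper semicontinuity plus Theorem~\ref{Theorem:TateGenusChange} (resp.\ the Jacobian criterion) on the generic fiber. One small correction: the lci property of $s^{-1}(u')$ in case (5) is not inherited from the lci clause on $\varphi(\mathscr{C}_u)$ but comes from first shrinking $U'$ so that $\mathscr{C}'$ and $U'$ are both smooth over $k$, whence every fiber of $s$ is automatically a local complete intersection and the generic fiber is regular; the paper then applies Proposition~\ref{Proposition:SmallJacobianNumberSmoothness} directly to that regular generic fiber rather than Theorem~\ref{MainTheorem3}, though either route works.
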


\begin{proof}[\bf \em Proof of Theorem \ref{MainTheoremMaps:HigherDimension}]
After possibly shrinking $U$, we may assume by Lemma \ref{Lemma:StableMapFamilyImage} 
that  the image $W$ of $\mathscr{C} \to U \times X$ is a flat family of rational curves.
After replacing $\mathscr{C}$ by the normalization of $W$ and possibly shrinking $U$ further, 
we may assume that $\mathscr{C}$ and $U$ are normal varieties
and that the morphism $\varphi_u \colon \mathscr{C}_u \to X$
is a generic immersion for every closed point $u \in U$.
After possibly shrinking $U$ even further,
there exists a commutative diagram
\[
\xymatrix{
\mathscr{C}  \ar^-{}[r] \ar@/^16pt/[rr]^-{\varphi} \ar[d]_-{\pi} &
\mathscr{C}' \ar[d]_-{s} \ar[r]^-{} & X \\
U \ar[r]^-{t} & U'
}
\]
as in Lemma \ref{KeyLemma}.
We have
\[ k(U) \cap k(X)^{\sep}=k(U) \cap k(\mathscr{C}')=k(U') \]
since $k(U')$ is algebraically closed in $k(\mathscr{C}')$.
Hence the extension $k(\mathscr{C}')/k(U')$ is separable by our assumptions.
After shrinking $U'$ again, we may assume that the varieties $\mathscr{C}'$ and $U'$ 
are smooth over $k$.
By \cite[Theorem 7.1]{Badescu},
after shrinking $U'$, the fibers $s^{-1}(u')$ are geometrically reduced for every closed point $u' \in U'$; see also \cite[Th\'eor\`eme 12.2.4]{EGA4-3}.
Shrinking $U'$ again, we may assume that $s \colon \mathscr{C}' \to U'$ is flat.

Now, we assume that the assumption (v) of Theorem \ref{MainTheoremMaps:HigherDimension} holds.
By Proposition \ref{Proposition:JacobianNumberFiniteMorphism},
the Jacobian numbers of $\mathscr{C}'_{u'}$
are also strictly less than $p$ at every closed point.
(The fiber $\mathscr{C}'_{u'}$ is a local complete intersection because it is 
the fiber of $s \colon \mathscr{C}' \to U'$ and both, $\mathscr{C}'$ and $U'$, 
are smooth varieties.)
Let $K' := k(U')$ be the function field of $U'$ and let 
$K'^{\rm{sep}}$ be a separable closure of $K'$.
Let $\mathscr{C}'_{K'}$ be the generic fiber of $s \colon \mathscr{C}' \to U'$.
By Proposition \ref{Proposition:UpperSemicontinuityJacobianNumbers},
the Jacobian numbers of
$\mathscr{C}'_{K'^{\rm{sep}}} \,:=\, (\mathscr{C}'_{K'}) \otimes_{K'} K'^{\rm{sep}}$
are strictly less than $p$ at every closed point.
Since $\mathscr{C}'$ is a smooth variety, it is regular.
Hence, the generic fiber $\mathscr{C}'_{K'}$ is regular.
By \cite[Proposition 6.7.4 (a)]{EGA4-2}, $\mathscr{C}'_{K'^{\rm{sep}}}$ is also regular.
By Proposition \ref{Proposition:SmallJacobianNumberSmoothness},
$\mathscr{C}'_{K'^{\rm{sep}}}$ is smooth over $K'^{\rm{sep}}$ and
therefore, $\mathscr{C}'_{K'}$ is smooth over $K'$.
After replacing $U'$ by an \'etale neighborhood, we may assume that
$\mathscr{C}'_{K'}$ has a $K'$-rational point.
Then, $\mathscr{C}'_{K'}$ is isomorphic to the projective line $\P^1_{K'}$ over $K'$,
see \cite[Lemma 11.8]{Badescu}.
This implies that $\mathscr{C}'$ is birationally equivalent to 
$\P^1 \times U'$ over $k$.
Since $k(\mathscr{C}')/k(X)$ is separable, we conclude that $X$ is separably uniruled,
as desired.

When the assumption (iv) of Theorem \ref{MainTheoremMaps:HigherDimension} holds, we can argue similarly by using Theorem \ref{Theorem:TateGenusChange},
Proposition \ref{Proposition:BirationalDeltaInvariant}, and Proposition \ref{Proposition:UpperSemicontinuityDeltaInvariant} instead of Proposition \ref{Proposition:SmallJacobianNumberSmoothness}, Proposition \ref{Proposition:JacobianNumberFiniteMorphism}, and Proposition \ref{Proposition:UpperSemicontinuityJacobianNumbers}, respectively.
\end{proof}

Next, we show how Theorem \ref{MainTheoremMaps:HigherDimension} implies 
Theorem \ref{MainTheorem2}.

\begin{proof}[\bf \em Proof of Theorem \ref{MainTheorem2}]
Assume that $\mathscr{C} \subset U \times X$
gives a family of rational curves on $X$
satisfying the assumptions of Theorem \ref{MainTheorem2}. 
We will show that $\mathscr{C}$ satisfies the assumptions of 
Theorem \ref{MainTheoremMaps:HigherDimension} after possibly shrinking $U$.
When the assumption (v) of Theorem \ref{MainTheorem2} holds, we have to show that, after shrinking $U$, 
for \textit{every} closed point $u \in U$,
$\mathscr{C}_{u}$ is a local complete intersection rational curve on $X$,
all of  whose Jacobian numbers at closed points are strictly less than $p$.

By the assumptions of Theorem \ref{MainTheorem2},
there exists a closed point $u_0 \in U$ such that
$\mathscr{C}_{u_0}$ is a local complete intersection rational curve on $X$,
all of whose Jacobian numbers at closed points are strictly less than $p$. 
Since the fibers of $\mathscr{C} \to U$ is reduced,
after shrinking $U$, 
we may assume that, for \textit{every} closed point $u \in U$,
the Jacobian numbers of $\mathscr{C}_{u}$ are strictly less than $p$ at every closed point,
by Proposition \ref{Proposition:UpperSemicontinuityJacobianNumbers}.
Since $\mathscr{C}_{u_0}$ is a local complete intersection,
the generic fiber $\mathscr{C}_{\eta}$ is a local complete intersection over $\kappa(\eta)$
and hence, after shrinking $U$, we may assume that for \textit{every} closed point $u \in U$,
$\mathscr{C}_{u}$ is a local complete intersection rational curve.

When the assumption (iv) of Theorem \ref{MainTheorem2} holds, by the same arguments as before, we can show that, after shrinking $U$, 
for \textit{every} closed point $u \in U$, the $\delta$-invariants of $\mathscr{C}_{u}$ are strictly less than $(p-1)/2$ at closed points by using Proposition \ref{Proposition:UpperSemicontinuityDeltaInvariant}.
\end{proof}

Finally, we prove Theorem \ref{MainTheorem1}.
This follows from the following more general result for 
maps from a family of curves with rational components by Proposition \ref{Proposition:UpperSemicontinuityJacobianNumbers} and Proposition \ref{Proposition:UpperSemicontinuityDeltaInvariant}.

\begin{thm}
\label{MainTheoremMaps:Surface}
Let $k$ be an algebraically closed field $k$ of characteristic $p>0$.
Let $X$ be a smooth, proper, and connected surface $X$ over $k$.
Let $(\pi, \varphi)$ be 
a map from a family of curves with rational components
as in Definition \ref{Definition:FamilyRationalCurve} (vi)
such that $\varphi \colon \mathscr{C} \to X$ is dominant.
Moreover, we assume that \textit{at least one} of the following conditions is satisfied:
\begin{enumerate}
\item $\dim(U)=1$, there exists a closed point $u_0 \in U$ such that
$\varphi|_{\mathscr{C}_{u_0}} \colon \mathscr{C}_{u_0} \to X$
is a generic immersion,
and the $\delta$-invariants of $\varphi(\mathscr{C}_{u_0})$
are strictly less than $(p-1)/2$ at every closed point.
\item For every closed point $u \in U$,
the $\delta$-invariants of $\varphi(\mathscr{C}_{u})$ are strictly less than $(p-1)/2$ at every closed point.
\item $\dim(U)=1$, there exists a closed point $u_0 \in U$ such that
$\varphi|_{\mathscr{C}_{u_0}} \colon \mathscr{C}_{u_0} \to X$
is a generic immersion,
and the Jacobian numbers of $\varphi(\mathscr{C}_{u_0})$
are strictly less than $p$ at every closed point.
\item For every closed point $u \in U$, the Jacobian numbers of $\varphi(\mathscr{C}_{u})$ are strictly less than $p$ at every closed point.
\end{enumerate}
Then $X$ is separably uniruled and thus, $X$ has negative Kodaira dimension.
\end{thm}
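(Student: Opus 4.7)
The plan is to reduce Theorem \ref{MainTheoremMaps:Surface} to Theorem \ref{MainTheoremMaps:HigherDimension}. The main obstacle is that Theorem \ref{MainTheoremMaps:HigherDimension} takes a map from a family of \emph{rational curves} (geometric fibers integral), whereas we are only given a map from a family of curves with rational components (geometric fibers merely reduced). I must therefore extract from $\mathscr{C}$ a subfamily whose geometric fibers are integral rational curves, while controlling the invariants of the image fibers.

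Concretely, I first reduce to the case $\dim(U)=1$: in cases (1) and (3) this is already the hypothesis, while in cases (2) and (4) I replace $U$ by a general one-dimensional closed subvariety over which $\varphi$ remains dominant, which preserves the pointwise invariant bound. I then replace $\mathscr{C}$ by one of its irreducible components that still dominates $X$ under $\varphi$, and set $W := (\pi \times \varphi)(\mathscr{C}) \subset U \times X$ with the reduced induced scheme structure. By Lemma \ref{Lemma:StableMapFamilyImage}---part (1) in cases (2) and (4), and part (2) in cases (1) and (3), where the generic embedding hypothesis at $u_0$ is used exactly here---after shrinking $U$ the fibers $W_u$ are reduced. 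Following the technique used in the proof of Proposition \ref{Proposition:UniruledExistenceNonRigidRationalCurves}, after possibly taking a finite base change of $U$ and shrinking $U$ further, I may assume that the geometric fibers of $W \to U$ are integral rational curves, so that $W \to U$ is a family of rational curves on $X$ in the sense of Definition \ref{Definition:FamilyRationalCurve} (3) with $W \to X$ dominant.

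Each fiber $W_u$ of $W \to U$ is a closed subscheme of $\varphi(\mathscr{C}_u)$, so Propositions \ref{Proposition:DeltaInvariantClosedSubscheme} and \ref{Proposition:JacobianNumberClosedSubscheme} give $\delta(W_u,x)\le\delta(\varphi(\mathscr{C}_u),x)$ and $\jac(W_u,x)\le\jac(\varphi(\mathscr{C}_u),x)$ at every closed point $x$. The finite base change only redistributes the geometric invariants via Propositions \ref{Proposition:DeltaInvariantEtaleCovering} and \ref{Proposition:JacobianNumberEtaleCovering}, so the bound is preserved. In cases (2) and (4) the required pointwise bound on $W_u$ therefore holds for every $u\in U$. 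In cases (1) and (3) it holds at $u_0$, and upper semicontinuity (Proposition \ref{Proposition:UpperSemicontinuityDeltaInvariant} or Proposition \ref{Proposition:UpperSemicontinuityJacobianNumbers}) extends it to every closed point of $U$ after one further shrinking. The remaining hypotheses of Theorem \ref{MainTheoremMaps:HigherDimension} are straightforward: $\dim(U)=\dim(X)-1=1$ and $W\to X$ is dominant by construction; the separability condition (3) of that theorem is automatic when $\dim(U)=1$, as explained in Section \ref{Subsection:SketchProof}; and the local complete intersection hypothesis in the Jacobian-number cases is automatic because every curve on the smooth surface $X$ is a Cartier divisor. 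Applying Theorem \ref{MainTheoremMaps:HigherDimension} to $W \to U$ then yields that $X$ is separably uniruled, as desired.
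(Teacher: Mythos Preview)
Your proposal is correct and follows essentially the same strategy as the paper: reduce to $\dim(U)=1$, pass to an irreducible component of $\mathscr{C}$ dominating $X$, use Lemma \ref{Lemma:StableMapFamilyImage} (part (2) in cases (1) and (3), part (1) otherwise) to get reduced fibers of the image $W$, arrange integral rational fibers after a finite base change and shrinking, transfer the invariant bounds via Propositions \ref{Proposition:JacobianNumberClosedSubscheme}/\ref{Proposition:DeltaInvariantClosedSubscheme} and upper semicontinuity, note that separability of $k(\mathscr{C})\cap k(X)^{\sep}$ over $k(U)\cap k(X)^{\sep}$ is automatic for $\dim(U)=1$ and that curves on a smooth surface are local complete intersections, and conclude by Theorem \ref{MainTheoremMaps:HigherDimension}. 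The only organizational difference is that the paper first reduces (3) to (4) and then proves (4), whereas you treat the four cases in parallel; one very minor point is that your appeal to Propositions \ref{Proposition:DeltaInvariantEtaleCovering}/\ref{Proposition:JacobianNumberEtaleCovering} for the finite base change is superfluous, since for closed points over the algebraically closed field $k$ the fibers are literally unchanged.
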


\begin{proof}
We only show Theorem \ref{MainTheoremMaps:Surface} under the assumption of condition (iii) or (iv) because the proofs of the other cases are similar.

We first show that Theorem \ref{MainTheoremMaps:Surface} under the assumption of condition (iv) implies 
Theorem \ref{MainTheoremMaps:Surface} under the assumption of condition (iii).
To see this, assume that $X,U,\mathscr{C}$ satisfy the condition (iii) of Theorem \ref{MainTheoremMaps:Surface}.
Let $\eta \in U$ be the generic point.
Replacing $U$ by a finite covering of it, 
we may assume that $U$ is a smooth and connected curve and irreducible components of $\mathscr{C}_{\eta}$ are geometrically irreducible over $\kappa(\eta)$.
Then, we replace $\mathscr{C}$ by an irreducible component that dominates $X$,
and we let
$W := (\pi \times \varphi)(\mathscr{C})$
be the image of $(\pi \times \varphi)|_{\mathscr{C}}$
endowed with the reduced induced subscheme structure.
By Lemma \ref{Lemma:StableMapFamilyImage} (ii),
after replacing $U$ by an open subset of $U$ containing $u_0$, we may assume 
that the fiber $\pr_1^{-1}(u)$ is reduced for every $u \in U$.
By Proposition \ref{Proposition:JacobianNumberClosedSubscheme} and
Proposition \ref{Proposition:UpperSemicontinuityJacobianNumbers},
after shrinking $U$ further if necessary,
we may assume that for every closed point $u \in U$
the Jacobian numbers of $\pr_1^{-1}(u) = \varphi(\mathscr{C}_{u})$
are strictly less than $p$ at every closed point.
Therefore, $X,U,\mathscr{C}$ satisfy the condition (iv)
of Theorem \ref{MainTheoremMaps:Surface}.

It remains to establish Theorem \ref{MainTheoremMaps:Surface} under the assumptions of condition (iv).
Assume that $X,U,\mathscr{C}$ satisfy the conditions of Theorem \ref{MainTheoremMaps:Surface} (iv).
There is a curve $C \subset U$ such that the family $\mathscr{C} \times_{C}U$ dominates $X$.
Hence we may assume that $U$ is a smooth and connected curve over $k$.
By Proposition \ref{Proposition:JacobianNumberClosedSubscheme},
after replacing $U$ by a finite covering of it, shrinking $U$, and replacing $\mathscr{C}$ by an irreducible component that dominates $X$, 
we may assume that $\mathscr{C} \to U$ is a flat family of rational curves. 
Since $X$ is a smooth surface, 
$\varphi(\mathscr{C})_u$ is a local complete intersection
for every closed point $u \in U$.
Since $U$ is a curve,
$k(\mathscr{C}) \cap k(X)^{\sep}$ is a separable extension of $k(U) \cap k(X)^{\sep}$;
see \cite[Theorem 2]{MacLane} (see also \cite[Lemma 7.2]{Badescu}).
Hence, $X$ satisfies the assumptions of Theorem \ref{MainTheoremMaps:HigherDimension}
which implies that $X$ is separably uniruled.
\end{proof}

\section{Examples}
\label{Section:Examples}

In this section, we give some examples illustrating
Theorem \ref{MainTheorem1} and Theorem \ref{MainTheorem2}.
In particular, we show that these results are in some sense optimal and that naive
generalizations are false.
We work over an algebraically closed field $k$
of characteristic $p \geq 0$.

\subsection{An easy corollary}


\begin{cor}\label{cor:easy}
Assume $p >0$. Let $X$ be a smooth, proper, and connected surface of non-negative Kodaira dimension over $k$.
Let $C\subset X$ be a rational curve with
\[ C^2+K_X\cdot C<p-3. \]
Then, $C$ is topologically rigid.
\end{cor}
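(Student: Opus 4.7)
The plan is to deduce this from Corollary \ref{Corollary:K3}~(1) by using the adjunction / arithmetic genus formula to bound the $\delta$-invariants of $C$ in terms of the intersection numbers $C^2$ and $K_X \cdot C$.

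First, I would recall the genus formula for a curve on a smooth surface: since $X$ is a smooth surface and $C \subset X$ is a curve, the arithmetic genus is
\[
p_a(C) \;=\; \frac{C^2 + K_X \cdot C}{2} + 1.
\]
Since $C$ is assumed to be rational, its normalization is $\mathbb{P}^1$, so the geometric genus of $C$ vanishes. The standard relation between arithmetic and geometric genus via the normalization morphism gives
\[
p_a(C) - p_g(C) \;=\; \sum_{x \in C} \delta(C,x),
\]
and hence $\sum_{x \in C} \delta(C,x) = (C^2 + K_X \cdot C)/2 + 1$.

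Next I would observe that since $\delta$-invariants are non-negative, for every closed point $x \in C$ we have the crude bound
\[
\delta(C,x) \;\le\; \sum_{y \in C} \delta(C,y) \;=\; \frac{C^2 + K_X \cdot C}{2} + 1.
\]
The hypothesis $C^2 + K_X \cdot C < p - 3$ is equivalent to $(C^2 + K_X \cdot C)/2 + 1 < (p-1)/2$, so we obtain $\delta(C,x) < (p-1)/2$ at every closed point of $C$.

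Finally, with this pointwise bound on the $\delta$-invariants established, the hypotheses of Corollary \ref{Corollary:K3}~(1) are satisfied, and we conclude that $C$ is rigid. There is no real obstacle here: the only subtle input is the adjunction formula on a smooth surface together with the global formula $p_a - p_g = \sum_x \delta(C,x)$, both of which are standard and do not require any characteristic hypothesis; the main theorem \ref{MainTheorem1} supplies the nontrivial content.
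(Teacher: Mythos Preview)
Your proof is correct and follows essentially the same approach as the paper: use the adjunction formula to bound $p_a(C)<(p-1)/2$, observe that this bounds each local $\delta$-invariant, and then invoke Corollary~\ref{Corollary:K3}(1). The paper's version is more terse but identical in substance.
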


\begin{proof}
By the adjunction formula \cite[Theorem 9.1.37]{Liu:Book}, the arithmetic genus of $C$ satisfies $p_a(C)<(p-1)/2$.
This implies that we have $\delta(C,x)<(p-1)/2$ for every closed point $x\in C$. 
Thus, $C$ is topologically rigid by Theorem \ref{MainTheorem1} and Corollary \ref{Corollary:K3}.
\end{proof}

\subsection{Nodal and cuspidal curves}
\label{Subsection:Example:NodalCuspidalRationalCurves}

Let $C$ be a reduced curve over $k$.
If $x \in C$ is a closed point, the $\delta$-invariant depends only on the completion $\widehat{\O}_{C,x}$.
Indeed, if $(\widehat{\O}_{C,x})'$ is the integral closure of $\widehat{\O}_{C,x}$ in its total ring of fractions, we have
$\delta(C, x)= \dim_k (\widehat{\O}_{C,x})'/\widehat{\O}_{C,x}$.
The Jacobian number also depends only on the completion;
see Proposition \ref{Proposition:JacobianNumberCompletion}
and Corollary \ref{Corollary:JacobianNumberDerivative}.
We leave the easy computations of the following result to the reader.

\begin{prop}
\label{Proposition:cusp}
Let $C$ be a reduced curve over $k$ and let $x\in C$ be a closed point.
\begin{enumerate}
\item  $x \in C$ is a \textit{node} (or \textit{ordinary double point})
if we have $\widehat{\O}_{C,x} \cong k[[ S,T ]]/(ST)$.
(See \cite[Chapter 7, Definition 5.13 and Proposition 5.15]{Liu:Book}.)
In this case, we have $\delta(C,x) = 1$ and $\jac(C, x)=1$.

\item $x \in C$ is an \textit{ordinary cusp}
if we have $\widehat{\O}_{C,x} \cong k[[ S,T ]]/(S^2+T^3)$.
The $\delta$-invariant is $\delta(C,x)=1$.
The Jacobian number depends on $p=\chara(k)$ as follows:
  \[
   \begin{array}{c||c|c|c|c}
             & p=0 & p=2 & p=3 & p\geq5 \\
   \hline
     \jac(C,x) & 2 & 4 & 3 & 2 \\
  \end{array}
  \]

\item  
Let $F_1, F_2, F_3 \in k[S,T]$ 
be distinct linear forms over $k$.
We put $F = F_1 \cdot F_2 \cdot F_3$.
If we have
$\widehat{\O}_{C,x} \cong k[[ S,T ]]/\left( F \right)$,
then we have $\delta(C, x)=3$ and $\jac(C,x) = 4$.
\end{enumerate}
\end{prop}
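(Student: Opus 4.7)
All three assertions reduce to explicit calculations on the completed local ring $\widehat{\O}_{C,x}$, since $\delta(C,x)$ depends only on the completion (by the opening remarks of this subsection) and $\jac(C,x)$ does too by Proposition~\ref{Proposition:JacobianNumberCompletion}; for the Jacobian number I invoke Corollary~\ref{Corollary:JacobianNumberDerivative} to write $\jac(C,x) = \dim_k k[[S,T]]/(f_S, f_T, f)$, while for $\delta$ I exhibit the normalization of $k[[S,T]]/(f)$ and count the codimension of its image.

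For~(1) with $f = ST$, the normalization is $k[[S]] \times k[[T]]$ and its image consists of pairs agreeing in constant term, giving $\delta = 1$; the equality $(f_S, f_T, f) = (T, S, ST) = (S,T)$ yields $\jac = 1$. For~(2), the parametrization $S \mapsto v^3,\ T \mapsto -v^2$ identifies the normalization of $k[[S,T]]/(S^2 + T^3)$ with $k[[v]]$ whose image is $k + v^2 k[[v]]$, so $\delta = 1$. For the Jacobian, $f_S = 2S$ and $f_T = 3T^2$: if $p \notin \{2,3\}$ the ideal is $(S, T^2)$ and $\jac = 2$; if $p = 3$ the ideal is $(S, T^3)$ and $\jac = 3$; if $p = 2$ one has $f_S = 0$ and the ideal reduces to $(T^2, S^2)$ (using $T^3 \in (T^2)$ to simplify $f$), giving $\jac = 4$.

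For~(3), since $\mathrm{PGL}_2(k)$ acts simply $3$-transitively on $\mathbb{P}^1(k)$, after a linear change of $(S,T)$ I may assume $F_1 = S$, $F_2 = T$, $F_3 = S+T$, so $f = S^2 T + S T^2$. The normalization is $R' := k[[u_1]] \times k[[u_2]] \times k[[u_3]]$, one factor per smooth branch, with $S \mapsto (0, u_2, u_3)$ and $T \mapsto (u_1, 0, -u_3)$. A direct count shows the cokernel of $R \to R'$ is two-dimensional in the constant direction (only the diagonal constants are hit) and one-dimensional in the degree-one direction (the image is the rank-$2$ space spanned by $S$ and $T$), while every element of $R'$ vanishing to order $\geq 2$ at each branch point already lies in the image (visibly on quadratic monomials via $T^2 + ST$, $S^2 + ST$, $ST$, and propagated to higher order); hence $\delta = 2 + 1 = 3$, in agreement with the classical formula $\delta = \binom{r}{2}$ for an ordinary $r$-fold plane singularity. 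For the Jacobian number, $f_S = 2ST + T^2$ and $f_T = S^2 + 2ST$; one verifies that $(f_S, f_T, f)$ contains $S^2$, $T^2$, and every cubic monomial in $S, T$, so the quotient has basis $\{1, S, T, ST\}$ and $\jac = 4$.

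The only genuinely delicate step is the Jacobian calculation in characteristics $2$ and $3$. In $p = 2$ the partials lose their leading coefficient and one has to observe that $f$ itself lies in the ideal generated by the surviving partials. In $p = 3$ for the triple point, the natural identity $S f_S + T f_T \equiv -3\, S T^2 \pmod{(f_S, f_T)}$ becomes trivial, so one must instead use $f \equiv 2\, S T^2 \pmod{(f_S, f_T)}$ (valid after reducing in characteristic $3$) to force the cubic monomials into the Jacobian ideal. Apart from these small-characteristic degenerations, each part is a finite-dimensional linear-algebra computation.
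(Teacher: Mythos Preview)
Your argument is essentially correct and, since the paper simply writes ``We leave the easy computations of the following result to the reader,'' there is nothing to compare against; you have supplied exactly the kind of explicit verification the authors omit.

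One small slip in part~(3): in characteristic $\neq 2$ the Jacobian ideal $(f_S,f_T,f)$ does \emph{not} contain $S^2$ or $T^2$ individually. From $f_T=S^2+2ST$ and $f_S=T^2+2ST$ you only get $S^2\equiv T^2\equiv -2ST$ in the quotient, not $S^2\equiv 0$. Your conclusion is nonetheless right: once $S^2$ and $T^2$ are expressed in terms of $ST$, every cubic monomial dies (in characteristic $\neq 3$ because $S^2T\equiv -2ST^2\equiv 4S^2T$ forces $3S^2T=0$; in characteristic $3$ because, as you note, $f\equiv 2ST^2$ modulo $(f_S,f_T)$ kills the cubics), and the quotient has basis $\{1,S,T,ST\}$, giving $\jac=4$ in every characteristic. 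So only the intermediate justification needs rewording, not the answer.
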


\begin{rem}
There are several equivalent definitions of ordinary cusp singularities, see 
for example, \cite[p.~308, Definition 2.17]{Shimada},
where basic properties of ordinary cusp singularities are studied
in arbitrary characteristic, including $p=2$ and $p=3$.
We also refer to \cite{GreuelKroning} for the classification of simple curve singularities in arbitrary characteristic $p$.
There, ordinary cusps arise as singularities of type $A_{2}$ (resp.\ $A^{0}_{2}$) when $p \neq 2$ (resp.\ $p=2$).
\end{rem}

Theorem \ref{MainTheorem1} implies the following.

\begin{cor}
\label{Corollary:RigidNodeCusp}
Let $X$ be a smooth, proper, and connected surface over $k$.
\begin{enumerate}
\item If $X$ contains a topologically non-rigid rational curve $C \subset X$ such that
every singularity of $C$ is a node,
then $X$ has negative Kodaira dimension.
\item If $p \geq 5$ and $X$ contains a topologically non-rigid rational curve $C \subset X$
such that every singularity of $C$ is a node or an ordinary cusp,
then $X$ has negative Kodaira dimension.
\end{enumerate}
\end{cor}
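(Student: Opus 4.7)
The plan is to deduce both statements directly from Theorem \ref{MainTheorem1} combined with the explicit local computations recorded in Proposition \ref{Proposition:cusp}; in fact this corollary is nothing but the contrapositive reformulation of Corollary \ref{Corollary:K3} (3) and (4).

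First I would treat the case $p = 0$ uniformly for (1) and (2). Since $C \subset X$ is a non-rigid rational curve and $\dim X = 2$, the defining condition $\dim\overline{\varphi(\mathscr{C})} \geq 2$ forces $\varphi \colon \mathscr{C}\to X$ to be dominant, so $X$ is uniruled by Proposition \ref{Proposition:UniruledExistenceNonRigidRationalCurves}. In characteristic zero uniruledness is equivalent to negative Kodaira dimension by the classification of surfaces (Proposition \ref{Proposition:SeparablyUniruledPlurigenera} (2)), so we are done.

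For (1) in positive characteristic, I would invoke Proposition \ref{Proposition:cusp} (1), which gives $\jac(C,x) = 1$ at every node, while at smooth points the Jacobian number vanishes by Proposition \ref{Proposition:JacobianNumberZeroSmoothness}. Thus $\jac(C,x) \leq 1 < p$ at every closed point of $C$, so condition (2) of Theorem \ref{MainTheorem1} applies and $X$ is separably uniruled, hence has negative Kodaira dimension. For (2) with $p \geq 5$, Proposition \ref{Proposition:cusp} (1) and (2) give $\jac(C,x) \leq 2$ at every closed point (with value $1$ at nodes and $2$ at ordinary cusps, using the tabulated value in the range $p \geq 5$), which is strictly less than $p$, and Theorem \ref{MainTheorem1} (2) applies again. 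Equivalently, one may observe that $\delta(C,x) = 1 < (p-1)/2$ at every singularity once $p \geq 5$, and invoke Theorem \ref{MainTheorem1} (1) instead.

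There is no genuine obstacle here: the entire argument consists of matching the local invariants of the allowed singularities against the thresholds $(p-1)/2$ and $p$ supplied by Theorem \ref{MainTheorem1}. The only subtle point to remember is that in (1) nodes have $\delta = 1$, so in small characteristic $p = 2, 3$ the $\delta$-invariant criterion is \emph{not} available and one genuinely needs the Jacobian number criterion; this is precisely the reason for including both parts of Theorem \ref{MainTheorem1} and was the content of the remark following Corollary \ref{Corollary:K3}.
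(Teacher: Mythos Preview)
Your proposal is correct and takes essentially the same approach as the paper: the corollary is presented there simply as an immediate consequence of Theorem \ref{MainTheorem1} together with the explicit values of $\jac$ and $\delta$ at nodes and ordinary cusps from Proposition \ref{Proposition:cusp}. Your additional handling of the characteristic-zero case (where Theorem \ref{MainTheorem1} does not apply) via Proposition \ref{Proposition:UniruledExistenceNonRigidRationalCurves} and Proposition \ref{Proposition:SeparablyUniruledPlurigenera} is a reasonable completion that the paper leaves implicit.
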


\begin{rem}
In characteristic $2$ or $3$, Corollary \ref{Corollary:RigidNodeCusp} (ii)
does not hold in general, because there exist quasi-elliptic surfaces 
of non-negative Kodaira dimension in these characteristics.
These contain topologically non-rigid cuspidal rational curves;
see Section \ref{Subsection:QuasiEllipticFibrations} below.
\end{rem}

\subsection{Topologically non-rigid rational curves and supersingular surfaces}

In positive characteristic, there exist surfaces of 
non-negative Kodaira dimension containing topologically non-rigid rational curves.
However, such surfaces have special properties:
if  $\rho(X)$ denotes the Picard number and if 
$b_2(X) := \dim_{\Q_{\ell}} H^2_{\text{\'et}}(X,\Q_{\ell})$
(which is independent of $\ell$ as long as $\ell\neq \chara(k)$) 
denotes the second Betti number, then 
\textit{Igusa's inequality} states that $\rho(X)\leq b_2(X)$; 
see \cite{Igusa}.
If $X$ contains a topologically non-rigid rational curve, then equality holds.

\begin{prop}
\label{Proposition:ExistenceNonRigidRationalCurvesPicardBetti}
 Let $X$ be a smooth, proper, and connected surface 
 over an algebraically closed field $k$ of characteristic $p \geq 0$.
 Assume that $X$ contains a topologically non-rigid rational curve $C \subset X$.
 Then, $X$ is uniruled and the equality
 $ \rho(X) = b_2(X)$ holds.
\end{prop}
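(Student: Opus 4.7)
The plan is first to extract uniruledness from the non-rigid rational curve, and then to transport the equality $\rho = b_2$ from a simple birational model to $X$ via a pullback/pushforward argument on $\ell$-adic cohomology.

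Since $C$ is non-rigid, by Definition \ref{Definition:FamilyRationalCurve} (3) there is a family of rational curves $\mathscr{C} \subset U \times X$ with $\dim \overline{\varphi(\mathscr{C})} \geq 2$; as $\dim(X) = 2$ this forces $\varphi \colon \mathscr{C} \to X$ to be dominant, so $X$ is dominated by a family of curves with rational components in the sense of Proposition \ref{Proposition:UniruledExistenceNonRigidRationalCurves} (3). By that proposition, $X$ is uniruled, and unpacking the definition there exist a smooth projective curve $Y$ and a dominant rational map $\psi \colon \P^1 \times Y \dashrightarrow X$.

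Next, I would resolve the indeterminacies of $\psi$ to obtain a surjective morphism $\tilde{\psi} \colon Z \to X$, where $Z$ is a smooth projective surface obtained from $\P^1 \times Y$ by a finite sequence of blow-ups at closed points. The K\"unneth formula applied to $\P^1 \times Y$ gives $b_2(\P^1 \times Y) = 2$, and the two fiber classes realize $\rho = 2$; since each blow-up at a smooth point increases both $\rho$ and $b_2$ by one, we obtain $\rho(Z) = b_2(Z)$. Fixing a prime $\ell \neq \chara(k)$, this means that every class in $H^2_{\text{\'et}}(Z, \Q_\ell(1))$ lies in the image of the cycle class map, i.e.\ is algebraic.

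Finally, $\tilde{\psi}$ is a surjective, generically finite morphism between smooth projective surfaces of the same dimension, and hence the projection formula yields $\tilde{\psi}_\ast \circ \tilde{\psi}^\ast = \deg(\tilde{\psi}) \cdot \mathrm{id}$ on $H^2_{\text{\'et}}(X, \Q_\ell(1))$. Given any class $\alpha \in H^2_{\text{\'et}}(X, \Q_\ell(1))$, the class $\tilde{\psi}^\ast \alpha$ is algebraic by the previous step, so we may write $\tilde{\psi}^\ast \alpha = \sum_i a_i [D_i]$ for divisors $D_i$ on $Z$ and coefficients $a_i \in \Q_\ell$. Pushing forward yields
\[
\deg(\tilde{\psi}) \cdot \alpha \,=\, \tilde{\psi}_\ast \tilde{\psi}^\ast \alpha \,=\, \sum_i a_i \cdot [\tilde{\psi}_\ast D_i],
\]
which exhibits $\alpha$ as a $\Q_\ell$-linear combination of cycle classes of divisors on $X$. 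Combined with the standard injectivity of the cycle class map, this gives $\rho(X) = b_2(X)$. The main point to verify carefully is that algebraicity of cohomology classes descends along the proper pushforward $\tilde{\psi}_\ast$; this is a standard consequence of the projection formula and the functoriality of the cycle class map, but it is the only step where the characteristic-$p$ setting forces one to argue via $\ell$-adic cohomology rather than Hodge theory.
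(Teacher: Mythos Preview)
Your proof is correct and follows essentially the same strategy as the paper: obtain uniruledness from Proposition \ref{Proposition:UniruledExistenceNonRigidRationalCurves}, then use a dominant rational map $\P^1 \times Y \dashrightarrow X$ together with $\rho(\P^1 \times Y) = b_2(\P^1 \times Y) = 2$ to deduce $\rho(X) = b_2(X)$. The only difference is that the paper invokes Shioda's lemma \cite[Section 2, Lemma]{Shioda} (see also \cite[Proposition 14]{BogomolovHassettTschinkel}) as a black box for the descent of the equality $\rho = b_2$ along a dominant rational map, whereas you have unpacked precisely that argument: resolve indeterminacies, observe that blow-ups preserve $\rho = b_2$, and then use $\tilde{\psi}_\ast \circ \tilde{\psi}^\ast = \deg(\tilde{\psi}) \cdot \mathrm{id}$ on $H^2_{\text{\'et}}$ to push algebraicity down.
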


\begin{proof}
The assertion follows from Shioda's results in \cite{Shioda} as follows.
First, $X$ is uniruled by Proposition \ref{Proposition:UniruledExistenceNonRigidRationalCurves}.
Then there exists a dominant rational map $\P^1 \times C \dashrightarrow X$
for a smooth and proper curve $C$ over $k$.
Since $\rho(\P^1 \times C) = b_2(\P^1 \times C) = 2$,
a theorem of Shioda \cite[Section 2, Lemma]{Shioda}
implies $\rho(X) = b_2(X)$.
(See also \cite[Proposition 14]{BogomolovHassettTschinkel}.)
\end{proof}

\begin{rem}
A surface $X$ in positive characteristic satisfying $\rho(X) = b_2(X)$ is 
called \textit{Shioda-supersingular}.
By Proposition \ref{Proposition:ExistenceNonRigidRationalCurvesPicardBetti},
every rational curve on a smooth, proper, and connected surface 
that is not Shioda-supersingular is topologically rigid;
see \cite[Section 9]{LiedtkeSurvey} for an overview of
supersingular surfaces.
\end{rem}

\begin{rem}
For K3 surfaces over an algebraically closed field $k$ of characteristic $p>0$,
there is another notion of supersingularity, which is due to Artin \cite{Artin:SupersingularK3}:
a K3 surface $X$ is called \textit{Artin-supersingular} if the height of its formal Brauer group 
$\widehat{\mathrm{Br}}(X)$ is infinite.
By the Tate conjecture for K3 surfaces
\cite{Charles, KimMadapusiPera, MadapusiPera,  Maulik, Nygaard, NygaardOgus},
these two notions are equivalent.
(In characteristic $2$, see also \cite{IIK2018}.)
See \cite[Section 17 and Corollary 17.3.7]{Huybrechts:K3Book} and \cite{LiedtkeSurvey2} for details.
\end{rem}

\subsection{Quasi-elliptic fibrations}
\label{Subsection:QuasiEllipticFibrations}

Arguably, the best-studied examples of families of non-smooth rational curves moving on 
a surface are \textit{quasi-elliptic} fibrations, i.e., fibrations
$X \to C$, whose geometric generic fiber is a rational curve with an ordinary cusp singularity.
Such fibrations can and do exist in characteristic $2$ or $3$.
We refer to \cite{Badescu, BombieriMumford2, BombieriMumford3} for results and a detailed analysis.
For details on quasi-elliptic fibrations in characteristic $3$,
we refer to \cite{CossecDolgachev, Lang}.

\begin{ex}
\label{Quasi-ellipticK3char=2}
Rudakov and \v{S}afarevi\v{c} showed that
every supersingular K3 surface $X$ over an algebraically closed field $k$
of characteristic $p$ such that
$p=2$, or $p=3$ and $\sigma_0(X)\leq6$
admits a quasi-elliptic fibration; see \cite[Theorem 1]{RudakovShafarevich:Char2}
and \cite[Section 5]{RudakovShafarevich:K3FiniteChar}.
Here, $\sigma_0$ denotes the \textit{Artin-invariant}
as introduced by Artin in \cite{Artin:SupersingularK3}.
\end{ex}

\begin{ex}
\label{ex:Quasi-ellipticK3char=3}
We use the notation from Example \ref{ex:Fermat} below:
if $n=4$ and $p=3$, then for a generic choice of $[s_0:s_1]$ the curve 
$C_{[s_0:s_1]}$  is a rational curve with an ordinary cusp on
the Fermat surface $S_4$ of degree $4$.
The $\delta$-invariant (resp.\ Jacobian number) of the singular point of $C_{[s_0:s_1]}$ is $1$ (resp.\ 3) by Proposition \ref{Proposition:cusp} (ii).
Moreover, the fibration $S_4 \to \P^1$ is quasi-elliptic and the line $\ell\subset S_4$
is a multisection of degree $3$.
We also note that $S_4$ is isomorphic to the unique supersingular K3 
surface with Artin-invariant $\sigma_0=1$ in characteristic $3$.
\end{ex}

\begin{ex}
 \label{ex:quasi-elliptic}
  Let $X \to C$ be a quasi-elliptic fibration with a section over an algebraically closed
  field $k$ of characteristic $p=3$.
  The possible types of the fibers were classified by Lang in \cite[p.~479, Section 1.B]{Lang}; see also \cite[Proposition 5.5.9]{CossecDolgachev}.
  There are four possibilities, where we use the notation
  in terms Kodaira-N\'eron types as well as the one from \cite{CossecDolgachev}.
  \begin{enumerate}
  \item (Type $II$, also denoted $\widetilde{A}_0^{\ast \ast}$).
    This is the generic type.
    There exists an open and dense subset $U\subseteq\P^1$,
    such that the fiber $f^{-1}(u)$ for every $u\in U$ is of this type.
    The fiber is a rational curve with one ordinary cusp,
    whose $\delta$-invariant (resp.\ Jacobian number) is $1$ (resp.\ $3$) by Proposition \ref{Proposition:cusp} (ii).

  \item (Type $IV$, also denoted by $\widetilde{A}_2^{\ast}$).
    The fiber is the union of three smooth rational curves intersecting
    at one point with three different tangent directions.
    The $\delta$-invariant (resp.\ Jacobian number) of the intersection point is $3$ (resp.\ $4$) by Proposition \ref{Proposition:cusp} (iii).

  \item (Type $IV^{\ast}$, also denoted by $\widetilde{E}_6$).
    The reduced part of the fiber is the union of seven smooth rational curves:
    three of them are reduced, three of them have multiplicity $2$,
    and one of them has multiplicity $3$.

  \item (Type $II^{\ast}$, also denoted by $\widetilde{E}_8$).
    The reduced part of the fiber is the union of nine smooth rational curves:
    one of them is reduced,
    two of them have multiplicity $2$,
    two of them have multiplicity $3$,
    two of them have multiplicity $4$,
    one of them have multiplicity $5$, and
    one of them has multiplicity $6$.
 \end{enumerate}
   In the degenerate cases (i.e., fibers of Type $IV$, $IV^{\ast}$, or $II^{\ast}$),
   the reduced components of the fibers are smooth rational curves.
   For a degenerate fiber of Type $IV^{\ast}$ or $II^{\ast}$,
   the reduced part of the fiber is a union of smooth rational curves
   with transversal intersection.
   This does not contradict our results because most of the rational curves
   in the fiber have multiplicities greater than $1$.
   On every point $x$ of a non-reduced curve $C$, we have $\jac(C, x)=\infty$.
   There are infinitely many such points on a degenerate fiber of Type $IV^{\ast}$ or $II^{\ast}$.
\end{ex}

\begin{rem}
Example \ref{ex:quasi-elliptic} illustrates the following:
\begin{enumerate}
 \item $\delta$-invariants and Jacobian numbers need not stay constant in a family of rational curves.
  For example, in a quasi-elliptic fibration in characteristic $3$,
  the general fiber $C$ has one point $x$ with $\jac(C, x)=3$.
  For a fiber $C$ of type $IV$, there is one point $x$ with $\jac(C, x)=4$.
  For a fiber $C$ of type $IV^{\ast}$ or $II^{\ast}$,
  we have $\jac(C, x)=\infty$ for infinitely many points $x$.

 \item  By Corollary \ref{Corollary:RigidNodeCusp},
  the union of two smooth rational curves meeting transversally on
  a smooth, proper, and connected surface of non-negative 
  Kodaira dimension is topologically rigid.
  But the union of three smooth rational curves intersecting at one point (such a curve is \'etale locally isomorphic to $XY(X+Y) = 0$)
  need not be, as fibers of type $IV$ of quasi-elliptic fibrations in characteristic $3$ show.
  (However, we note that if the intersection has three different tangent directions,
  then the intersection point has Jacobian number $4$
  in every characteristic and thus, this configuration would be topologically rigid on a surface of non-negative Kodaira dimension in characteristic $p\geq5$.)

 \item A configuration of curves, such that the reduced part is
  a transverse intersection of smooth rational curves,
  but which has components of multiplicity at least $2$,
  may not be topologically rigid on a smooth, proper, and connected surface of 
  non-negative Kodaira dimension:
  fibers of type $IV^{\ast}$ and $II^{\ast}$ in quasi-elliptic fibrations
  in characteristic $3$ provide examples.
\end{enumerate}
\end{rem}

\subsection{Topologically non-rigid rational curves with large $\delta$-invariants and Jacobian numbers}
\label{subsec:large Jacobian}

In this subsection, we will see that in characteristic $p \geq 3$, 
there exist surfaces of non-negative Kodaira dimension 
that contain topologically non-rigid rational curves that have precisely
one singular point, which is of $\delta$-invariant (resp.\ Jacobian number) 
equal to $(p-1)/2$ (resp.\ $p$).
Thus, Theorem \ref{MainTheorem1} is in some sense optimal.

We start with an auxiliary result, which shows that also Theorem \ref{MainTheorem3}
and Theorem \ref{Theorem:TateGenusChange}
are in some sense optimal.

\begin{lem}
\label{Lemma:QuasiHyperelliptic}
Let $k$ be a field of characteristic $p\geq3$.
Let $K:=k(t)$ be the field of rational functions over $k$ of the variable $t$.
Then, there exists a proper curve $C$ over $K$ 
satisfying the following three conditions:
\begin{enumerate}
\item $C$ has a unique singular point, whose geometric $\delta$-invariant (resp.\ Jacobian number) is $(p-1)/2$ (resp.\ $p$),
\item $C$ is a regular scheme, and
\item $C_{K^{\alg}} := C_K \otimes_K K^{\alg}$ is a rational curve over $K^{\alg}$.
\end{enumerate}
In particular, the bound of Theorem \ref{MainTheorem3} is optimal.
\end{lem}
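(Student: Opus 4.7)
The plan is to realize $C$ as an explicit two-chart compactification of the affine curve
\[
U_1 \,:=\, \Spec K[x,y]/(y^2 - x^p + t), \qquad K \,=\, k(t),
\]
where $p \geq 3$ is odd. Since $(p+1)/2 \in \mathbb{Z}$, the substitution $u = 1/x$, $w = y/x^{(p+1)/2}$ transforms the equation into $w^2 = u - tu^{p+1}$, so I would define $C$ by gluing $U_1$ to
\[
U_2 \,:=\, \Spec K[u,w]/(w^2 - u + tu^{p+1})
\]
along $\{x \neq 0\} \leftrightarrow \{u \neq 0\}$ via these formulas, and take $C$ to be the resulting proper $K$-scheme.

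First I would verify that $C$ is regular and identify its non-smooth points. On $U_1$ the Jacobian criterion singles out the closed point $P$ cut out by $y=0, x^p=t$, which has residue field $K(t^{1/p})$; the relation $y^2 = x^p - t$ gives $x^p - t \in (y) \subset \O_{C,P}$, so its maximal ideal is principal and $P$ is a regular point. On $U_2$ the same analysis picks out the closed point with $w=0$ and $tu^p=1$, which under the gluing is identified with $P$, and again has a principal maximal ideal. The only other closed point is $Q := (u=0,w=0) \in U_2$; at $Q$ we have $\partial_u(w^2 - u + tu^{p+1})|_Q = -1 + (p+1)t\cdot 0 = -1 \neq 0$, so $C$ is smooth at $Q$. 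Hence $C$ is regular and $P$ is its unique non-smooth closed point.

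Next I would compute the invariants at $P$. Passing to $C_{K^{\alg}}$ and completing at the unique geometric point $\overline{P}$ above $P$ (uniqueness because $K(t^{1/p})\otimes_K K^{\alg}$ is a local Artinian ring), the substitution $v = x - t^{1/p}$ yields
\[
\widehat{\O}_{C_{K^{\alg}},\overline{P}} \,\cong\, K^{\alg}[[v,y]]/(y^2 - v^p),
\]
which is a domain because $y^2 - v^p$ is irreducible for $p$ odd (any factorization $(y-f)(y+f)$ would give $f^2 = -v^p$, impossible for odd $p$). The map $v \mapsto s^2$, $y \mapsto s^p$ identifies its normalization with $K^{\alg}[[s]]$, and the image subring corresponds to the numerical semigroup $\langle 2,p\rangle$, whose complement in $\mathbb{Z}_{\geq 0}$ is the set of odd integers $1,3,\dots,p-2$; this gives $\delta(C_{K^{\alg}},\overline{P}) = (p-1)/2$. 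Corollary \ref{Corollary:JacobianNumberDerivative} applied to $f = y^2 - v^p$ — noting that $f_v = -pv^{p-1} = 0$ in characteristic $p$ and that $f_y = 2y$ with $2$ a unit — shows that the Jacobian ideal equals $(y, v^p)$, so $\jac(C_{K^{\alg}},\overline{P}) = \dim_{K^{\alg}} K^{\alg}[[v]]/(v^p) = p$. Because there is a single geometric point above $P$, these are also the geometric $\delta$-invariant and the Jacobian number of $P \in C$ (for the latter using Proposition \ref{Proposition:JacobianNumberEtaleCovering}). Rationality of $C_{K^{\alg}}$ is immediate from the same parametrization: $s := y/v^{(p-1)/2}$ satisfies $s^2 = v$ and $y = s^p$, so the function field of $C_{K^{\alg}}$ equals $K^{\alg}(s)$ and its normalization is $\mathbb{P}^1_{K^{\alg}}$.

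The main obstacle is the choice of compactification: the naive projective closure of $U_1$ inside $\mathbb{P}^2_K$ acquires a second non-smooth point at infinity for $p \geq 5$, which would double the $\delta$-invariant and destroy the claim. The weighted substitution $w = y/x^{(p+1)/2}$ — whose exponent is integral precisely because $p$ is odd, which is where the hypothesis $p\geq 3$ is used essentially — is chosen so that the compactification has a single non-smooth closed point while being covered by two affine charts on which regularity is transparent.
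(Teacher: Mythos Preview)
Your proof is correct and follows essentially the same construction as the paper: the paper glues the two affine charts $\{Y^2=X^p+t\}$ and $\{Y'^2=X'+tX'^{p+1}\}$ via $X\mapsto 1/X'$, $Y\mapsto Y'/X'^{(p+1)/2}$, which is your construction up to replacing $t$ by $-t$. The paper establishes properness by exhibiting the degree-$2$ map to $\P^1_K$ (which you use implicitly but do not state); otherwise you supply more detail than the paper, which simply records that ``it is easy to see that $C$ satisfies all the conditions of the lemma.''
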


\begin{proof}
We consider the two affine curves
\[ C_1 : Y^2 = X^p + t \quad \text{and} \quad C_2 : Y'^2=X' + tX'^{p+1} \]
over $K$.
From these, we obtain a curve $C$ over $K$ by gluing 
the two curves $C_1$ and $C_2$ via the isomorphism
\[ \{\, X \neq 0 \,\} \,\cap\, C_1 \,\stackrel{\cong}{\longrightarrow}\, \{\, X' \neq 0 \,\} \,\cap\, C_2 \]
that is defined by $X\mapsto 1/X'$ and $Y\mapsto Y'/X'^{(p+1)/2}$; 
see \cite[Proposition 7.4.24]{Liu:Book}.
Moreover, gluing the two morphisms
\[ C_1 \to \Spec K[X], \quad (X, Y) \mapsto X \]
and
\[ C_2 \to \Spec K[X'], \quad (X', Y') \mapsto X', \]
we obtain a finite morphism $C \to \P^1_{K}$.
In particular, the curve $C$ is proper over $K$.
The curve $C$ is regular, but it is not smooth over $K$;
see also \cite[Chapter II, Exercise 6.4]{Hartshorne}.
The closed point $x \in C_1$ corresponding to 
the maximal ideal $(X^p+t, Y) \subset K[X, Y]$
is the unique singular point of $C$.
It is easy to see that $C$ satisfies all the conditions of the lemma.
\end{proof}

We now construct a surface $Y$ of general type over $k$
that contains a topologically non-rigid rational curve, which has one singular 
point of $\delta$-invariant (resp.\ Jacobian number) equal to $(p-1)/2$ (resp.\ $p$).
In fact, these constructions are inspired by Raynaud's counterexamples 
to the Kodaira vanishing theorem in positive characteristic from 
\cite[Section 3.1]{Raynaud:Counter}.

\begin{prop}
\label{Proposition:JacobianOptimal}
Let $k$ be an algebraically closed field of characteristic $p\geq3$.
Then, there exists a smooth, projective, and connected surface $Y$ over $k$
satisfying the following conditions:
\begin{enumerate}
\item The Kodaira dimension of $Y$ satisfies $\kappa(Y)\geq1$. 
If $p\geq5$, then we may even assume $\kappa(Y)=2$, i.e.,
$Y$ is a surface of general type.
\item $Y$ contains a topologically non-rigid rational curve $C \subset Y$, and
\item $C$ has a unique singular point, whose $\delta$-invariant (resp.\ Jacobian number) is equal to $(p-1)/2$ (resp.\ $p$).
\end{enumerate}
\end{prop}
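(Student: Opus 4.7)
I plan to spread out the regular but non-smooth curve $C : Y^2 = X^p + t$ of Lemma \ref{Lemma:QuasiHyperelliptic} into a family over a higher-genus base curve, and to verify that the resulting smooth projective surface has the required non-rigid rational curves and positive Kodaira dimension.

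First, I will construct a smooth projective surface $\mathscr{X}$ equipped with a fibration $\mathscr{X} \to \P^1_k$ by reinterpreting $t$ as a coordinate on $\A^1 \subset \P^1$ and gluing the two affine charts $\{Y^2 = X^p + t\}$ and $\{Y'^2 = X' + tX'^{p+1}\}$ exactly as in the proof of Lemma \ref{Lemma:QuasiHyperelliptic}. A direct inspection of the Jacobian matrix of each defining equation (with respect to the three variables on each chart) shows that $\mathscr{X}$ is smooth. For every closed point $a \in \A^1(k)$ I will identify $\mathscr{X}_a$ with $\{Y^2 = X'^p\}$ via the substitution $X' = X + a^{1/p}$ (legitimate since $k$ is algebraically closed of characteristic $p$), and then read off $\jac(\mathscr{X}_a, 0) = \dim_k k[[X', Y]]/(2Y, Y^2 - X'^p) = p$ from Corollary \ref{Corollary:JacobianNumberDerivative}. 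The equality $\delta(\mathscr{X}_a, 0) = (p-1)/2$ follows because the normalization is parametrized by $X' = T^2$, $Y = T^p$, so the complete local ring at the origin is $k[[T^2, T^p]]$, whose gap-set in $\Z_{\geq 0}$ has cardinality $(p-1)/2$. A parallel analysis on the chart at infinity confirms that $\mathscr{X}_a$ has no further singularities.

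Next, I will take a smooth projective curve $B$ of genus $\geq 2$ equipped with a finite morphism $\phi : B \to \P^1_k$, form the fiber product $\mathscr{X} \times_{\P^1_k} B$ (which is irreducible because so is the generic fiber of $\mathscr{X} \to \P^1_k$), and resolve its finitely many isolated singularities, all of which lie above the finite set where the critical locus $\{Y = 0, \, t = -X^p\}$ of $\mathscr{X} \to \P^1_k$ meets the ramification of $\phi$. Let $\mathscr{Y}$ denote the resulting smooth projective surface, which will serve as the required $Y$. Since resolution takes place over isolated singular points, the general fibers of the induced fibration $\pi : \mathscr{Y} \to B$ agree with the fibers of $\mathscr{X} \to \P^1_k$ over $\phi(b)$, hence have the singularity data required by (3); as this family dominates $\mathscr{Y}$, each such fiber is a non-rigid rational curve on $\mathscr{Y}$.

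Finally, I will compute $\kappa(\mathscr{Y})$ using $\omega_{\mathscr{Y}} \cong \omega_{\mathscr{Y}/B} \otimes \pi^{\ast}\omega_B$. By Riemann-Hurwitz applied to the separable double cover $\overline{C} \to \P^1_{k(t)}$ given by $X$, the generic fiber $C$ has arithmetic genus $(p-1)/2$, so $\deg \omega_C = p-3$. For $p \geq 5$, $\omega_C$ is ample on $C$, hence $\pi_{\ast} \omega_{\mathscr{Y}/B}^{\otimes m}$ is a vector bundle on $B$ whose rank grows linearly in $m$, and twisting by the ample $\omega_B^{\otimes m}$ gives $h^0(\mathscr{Y}, \omega_{\mathscr{Y}}^{\otimes m})$ of order $m^2$, so $\kappa(\mathscr{Y}) = 2$. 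For $p = 3$, $\omega_C$ has degree zero and a nonzero global section (since $p_a(C) = 1$), hence is trivial on $C$; a standard computation then shows that $\omega_{\mathscr{Y}}^{\otimes m}$ contains $\pi^{\ast} \omega_B^{\otimes m}$ for all $m$ in some arithmetic progression, and the sections of the latter grow linearly in $m$, giving $\kappa(\mathscr{Y}) \geq 1$. The main obstacle will be to control the resolution of $\mathscr{X} \times_{\P^1_k} B$ so that exceptional divisors do not spoil the Kodaira dimension computation; this can be handled by choosing $\phi$ sufficiently generic and, if necessary, by direct analysis of the pushforwards $\pi_{\ast} \omega_{\mathscr{Y}/B}^{\otimes m}$ near the resolved fibers.
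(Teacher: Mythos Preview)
Your construction mirrors the paper's exactly: spread out the curve of Lemma~\ref{Lemma:QuasiHyperelliptic} to a smooth surface over $\P^1_k$, base-change along a finite map from a curve $B$ of genus $\geq 2$, resolve, and observe that the general fibers still carry the prescribed singularity. Your explicit verification of the $\delta$-invariant and Jacobian number on the fibers is correct and more hands-on than the paper's appeal to semicontinuity.

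The divergence, and the genuine gap, is in the Kodaira dimension step. The paper handles it in one line by invoking the characteristic-$p$ Iitaka $C_{1,1}$ theorem of Chen--Zhang \cite{ChenZhang}: $\kappa(\mathscr{Y}) \geq \kappa(B) + \kappa(C)$, and since $p_a(C)=(p-1)/2$ one reads off $\kappa(\mathscr{Y})\geq 1$ for $p=3$ and $\kappa(\mathscr{Y})=2$ for $p\geq5$. Your direct route via $\pi_\ast\omega_{\mathscr{Y}/B}^{\otimes m}$ does not close as written: concluding that $h^0$ grows like $m^2$ from ``rank grows linearly, then twist by the ample $\omega_B^{\otimes m}$'' tacitly assumes a lower bound on $\deg \pi_\ast\omega_{\mathscr{Y}/B}^{\otimes m}$ (equivalently, some nefness or weak positivity of $K_{\mathscr{Y}/B}$), and in positive characteristic this is precisely the nontrivial content of results like \cite{ChenZhang}, not a formality. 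The $p=3$ claim that some $\omega_{\mathscr{Y}/B}^{\otimes m}$ admits a global section is likewise a positivity statement, not a ``standard computation''. You locate the obstacle in the resolution, but the real difficulty is already present before any blow-ups: even if $\mathscr{X}\times_{\P^1}B$ happened to be smooth, you would still need this positivity input. The fix is either to cite \cite{ChenZhang} as the paper does, or to compute $K_{\mathscr{Y}}$ explicitly on this concrete model in the spirit of Raynaud \cite{Raynaud:Counter} (and, for $p=3$, the canonical bundle formula for quasi-elliptic fibrations in \cite{BombieriMumford3}).
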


\begin{proof}
Let $C$ be the proper curve over $K=k(t)$ from Lemma \ref{Lemma:QuasiHyperelliptic}.
There exists a smooth, projective, and connected surface $X$ over $k$
together with a proper flat morphism $X \to \P^1$, whose generic fiber
satisfies $X \times_{\P^1} \Spec K \cong C$.
We choose a smooth and projective curve $S$ of genus $g(S) \geq 2$
and a generically \'etale morphism $S \to \P^1$.
Let $Y \to X \times_{\P^1} S$ be a resolution of singularities of 
$X \times_{\P^1} S$.
Then, the generic fiber of $Y \to S$ is isomorphic to $C_{k(S)}$. 
By \cite[Proposition 2.2]{Kollar:RationalCurveBook} and the proofs of 
Proposition \ref{Proposition:UpperSemicontinuityJacobianNumbers} and Proposition \ref{Proposition:UpperSemicontinuityDeltaInvariant}, there exists an open and dense subset $U \subset S$ such that
for every closed point $u\in U$,
the fiber $Y_u$ is a rational curve over $k$, and
the fiber $Y_u$ has a unique singular point and its $\delta$-invariant (resp.\ Jacobian number) is $(p-1)/2$ (resp.\ $p$).
Since $g(S)\geq2$, the Kodaira dimension of $S$ is equal to $\kappa(S)=1$.
If $p=3$, then the arithmetic genus of $C$ is equal to $p_a(C)=1$ and if $p\geq5$, then 
we even have $p_a(C)\geq2$.
Thus, we find $\kappa(Y)\geq1$ (resp.\ $\kappa(Y)=2$) if $p\geq3$ (resp.\ if $p\geq5$)
by a characteristic-$p$ version of Iitaka's $C_{1,1}$-conjecture;
see \cite[Theorem 1.3]{ChenZhang} for example.
\end{proof}

\subsection{Topologically non-rigid rational curves on Fermat surfaces}

In this subsection, we discuss 
topologically non-rigid rational curves on Fermat surfaces in characteristic $p > 0$.
Let $S_n \subset \P^3$ be the \textit{Fermat surface} of degree $n$ in $\P^3$
defined by
$X^n-Y^n + Z^n-W^n = 0$.
The signs of the defining equation are chosen so that it is easier 
to write down a line on $S_n$; see Example \ref{ex:Fermat} below.
The surface $S_n$ is smooth if and only if $p$ does not divide $n$, 
which we will assume from now on.
If $n \leq 3$, then $S_n$ is a rational surface,
$S_4$ is a K3 surface, and if
$n \geq 5$, then $S_n$ is a surface of general type.
In particular, if $n \geq 4$, then $S_n$ has non-negative Kodaira dimension.
On the other hand, Shioda showed that $S_n$ is unirational
if there exists an integer $\nu$ such that $p^\nu \equiv -1 \pmod{n}$; see \cite[Proposition 1]{Shioda}.

\begin{ex}
\label{ex:Fermat}
Assume $p \geq 3$. The Fermat surface $S_n \subset \P^3$ contains
the line $\ell$ defined by $X-Y = Z-W = 0$.
Let 
\[
  H_{[s_0:s_1]} \,=\, \{\, s_0 (X-Y) \,+\, s_1 (Z-W) \,=\, 0 \,\},
  \quad
  [s_0:s_1]\in\P^1_k
\]
be the pencil of planes in $\P^3$ containing the line $\ell$.
Then, $H_{[s_0:s_1]}\cap S_n$ is equal to the union of $\ell$ 
and a curve $C_{[s_0:s_1]}$, which is a plane curve of degree $(n-1)$
inside $H_{[s_0:s_1]}\cong\P^2$.
The above pencil gives rise to a fibration $S_n \to \P^1$,
whose fiber over $[s_0:s_1]\in\P^1$ is equal to $C_{[s_0:s_1]}$;
see also \cite[Exercise 7.4]{Badescu}.
Assume $n=p+1$.
For a generic choice of $[s_0:s_1]$, the curve 
$C_{[s_0:s_1]}$ is a rational curve, which has a unique intersection
point with $\ell$, in which the curve has a singularity.
The intersection multiplicity of $\ell$ with $C_{[s_0:s_1]}$ in this point is equal to $p$ and the line $\ell$ defines a degree-$p$ multisection of the fibration $S_n\to\P^1$.
The rational curve
$C_{[s_0:s_1]} \subset S_{p+1} \subset \P^3$
has its singular point at 
\[ [X:Y:Z:W] = [ s_0^{1/p} : s_0^{1/p} : s_1^{1/p} : s_1^{1/p} ]. \]
By an explicit calculation, which we omit, its $\delta$-invariant (resp.\ Jacobian number) is given by $(p-1)(p-2)/2$ (resp.\ $p(p-2)$).
\end{ex}

\begin{rem}
The above mentioned results of Shioda \cite{Shioda} on the unirationality of
Fermat surfaces have been generalized to Delsarte surfaces
by Katsura and Shioda \cite{ShiodaKatsura}.
In \cite{LiedtkeSchuett}, these have been used by Sch\"utt and 
the third named author to construct unirational surfaces on the Noether line for 
most values of $p_g$ and in most positive characteristics $p$.
\end{rem}

\subsection{Topologically non-rigid rational curves that are not contained in some fibration}

We will give examples of families of singular rational curves 
that are not contained in some fibration, whose geometric generic fiber is a rational curve.

\begin{lem}\label{PurelyInsepGroup}
Let $k$ be an algebraically closed field of characteristic $p>0$, and let $f : X \to Y$ be a dominant morphism
of smooth, proper, and connected surfaces over $k$, such that the induced extension
of function fields $k(Y)\subset k(X)$ is purely inseparable.
Then, the induced morphism 
$
\pi_1^{\rm{\acute{e}t}}(X)
\to \pi_1^{\rm{\acute{e}t}}(Y)
$
of \'etale fundamental groups is an isomorphism.
\end{lem}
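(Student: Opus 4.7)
The plan is to factor $f$ via Stein factorization as a proper birational morphism followed by a finite purely inseparable morphism, and to handle the two pieces separately. Set $X' := \Spec_Y(f_{*}\mathcal{O}_{X})$, so that $f$ decomposes as $X \xrightarrow{f'} X' \xrightarrow{\pi} Y$ with $\pi$ finite and $f'$ proper with geometrically connected fibers. Since $X$ is smooth, hence normal, $X'$ is normal and $k(X') = k(X)$. The function field extension $k(X')/k(Y) = k(X)/k(Y)$ is purely inseparable, so by Lemma~\ref{Lemma:Radicial} (using the normality of $Y$), the morphism $\pi$ is radicial. Being finite, surjective, and radicial, $\pi$ is a universal homeomorphism, and therefore induces an equivalence of small \'etale sites; see SGA~1, Exp.~IX. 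In particular, $\pi_{1}^{\rm{\acute{e}t}}(X') \to \pi_{1}^{\rm{\acute{e}t}}(Y)$ is an isomorphism.

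It remains to show that $f'$ also induces an isomorphism on $\pi_{1}^{\rm{\acute{e}t}}$. Let $U \subset X'$ be the open locus on which $f'$ is an isomorphism, and set $V := (f')^{-1}(U) \subset X$. Since $f'$ is proper and birational and $X'$ is normal, the valuative criterion of properness implies that $f'$ is an isomorphism at every codimension-one point of $X'$; hence $X' \setminus U$ has codimension $\geq 2$ in $X'$. Because $\pi$ is a homeomorphism onto $Y$, the subset $X' \setminus U$ corresponds to a closed subset of codimension $\geq 2$ in the smooth surface $Y$. Zariski--Nagata purity on $Y$, transferred via the \'etale-site equivalence induced by $\pi$, then shows that the inclusion $U \hookrightarrow X'$ induces an isomorphism on $\pi_{1}^{\rm{\acute{e}t}}$. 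Meanwhile, $V \subset X$ is a dense open subset of the smooth (hence normal) variety $X$, so the inclusion induces a surjection $\pi_{1}^{\rm{\acute{e}t}}(V) \twoheadrightarrow \pi_{1}^{\rm{\acute{e}t}}(X)$. Since $f'|_V \colon V \to U$ is an isomorphism, we obtain a commutative square
\[
\xymatrix{
\pi_{1}^{\rm{\acute{e}t}}(V) \ar^-{\cong}[r] \ar@{->>}[d] & \pi_{1}^{\rm{\acute{e}t}}(U) \ar^-{\cong}[d] \\
\pi_{1}^{\rm{\acute{e}t}}(X) \ar[r]^-{f'_{*}} & \pi_{1}^{\rm{\acute{e}t}}(X')
}
\]
in which the top row and the right column are isomorphisms while the left column is a surjection. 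A short diagram chase then forces $f'_{*}$ to be an isomorphism. Combining this with the finite step yields the desired isomorphism $\pi_{1}^{\rm{\acute{e}t}}(X) \cong \pi_{1}^{\rm{\acute{e}t}}(Y)$.

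The main technical obstacle lies in the birational step: $X'$ need not be regular, even when both $X$ and $Y$ are smooth, so Zariski--Nagata purity cannot be applied to $X'$ directly. The crucial observation that sidesteps this issue is that the universal homeomorphism $\pi$ identifies \'etale sites, so purity statements on the smooth target $Y$ transfer to $X'$ without requiring $X'$ itself to be regular.
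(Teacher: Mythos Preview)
Your proof is correct and relies on the same ingredients as the paper's argument: Lemma~\ref{Lemma:Radicial} to recognize the finite part as radicial, the invariance of \'etale sites under universal homeomorphisms (SGA~1, Exp.~IX), Zariski--Nagata purity on the smooth surface $Y$ (SGA~1, Corollaire~X.3.3), and the surjection $\pi_1^{\rm{\acute{e}t}}(V)\twoheadrightarrow\pi_1^{\rm{\acute{e}t}}(X)$ for a dense open in a normal variety. The organization, however, differs. You Stein-factorize $f$ as $X\to X'\to Y$ and treat the proper birational piece and the finite radicial piece separately; the paper instead chooses an open $V\subset Y$ with $Y\setminus V$ finite over which $f$ is already finite, applies purity directly on $Y$, and runs the same diagram chase with $f^{-1}(V)\to V$ in place of your $V\to U$. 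The paper's route is a bit more economical, since no intermediate scheme $X'$ appears and the potential non-regularity of $X'$ never becomes an issue. Your workaround for that non-regularity---transporting purity from $Y$ to $X'$ through the \'etale-site equivalence induced by the universal homeomorphism $\pi$---is correct and is a pleasant observation, but it is exactly the extra step that the paper's more direct restriction avoids.
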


\begin{proof}
There exists an open subset $V \subset Y$ such that 
$f^{-1}(V) \to V$ is finite and $Y \backslash V$ consists of finitely many closed points. 
The induced morphism  $\pi_1^{\rm{\acute{e}t}}(V)\to \pi_1^{\rm{\acute{e}t}}(Y)$
is an isomorphism by \cite[Corollaire X.3.3]{SGA 1}. 
The induced morphism 
$\pi_1^{\rm{\acute{e}t}}(f^{-1}(V))\to \pi_1^{\rm{\acute{e}t}}(V)$
is an isomorphism by Lemma \ref{Lemma:Radicial} and \cite[Th\'eor\`eme IX.4.10]{SGA 1}. 
Since $X$ is normal, the induced morphism
$\pi_1^{\rm{\acute{e}t}}(f^{-1}(V))\to \pi_1^{\rm{\acute{e}t}}(X)$
is surjective.
Therefore, the induced morphism
$\pi_1^{\rm{\acute{e}t}}(X)\to \pi_1^{\rm{\acute{e}t}}(Y)$
is an isomorphism.
\end{proof}


\begin{lem}
  Let $X$ be a smooth, proper, and connected surface over an algebraically closed field $k$
  and assume that the \'etale fundamental group $\pi_1^{\rm{\acute{e}t}}(X)$ is finite, but non-trivial.
  Then, neither $X$ nor any smooth, proper, and connected surface that is birationally equivalent to $X$, 
  admits a fibration, whose geometric generic fiber is a rational curve.
\end{lem}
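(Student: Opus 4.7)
The plan is to argue by contradiction. Suppose some smooth, proper, and connected surface $X'$ birationally equivalent to $X$ admits a fibration $f \colon X' \to C$ over a smooth proper curve $C$ whose geometric generic fiber is a rational curve. Since the \'etale fundamental group is a birational invariant among smooth proper connected surfaces over an algebraically closed field (any such birational equivalence factors as a chain of point blow-ups, each preserving $\pi_1^{\rm{\acute{e}t}}$), we have $\pi_1^{\rm{\acute{e}t}}(X') \cong \pi_1^{\rm{\acute{e}t}}(X)$, so the target of our contradiction is also finite and non-trivial. The strategy will be to establish $\pi_1^{\rm{\acute{e}t}}(X') \cong \pi_1^{\rm{\acute{e}t}}(C)$; as $C$ is a smooth proper curve over $k$, its \'etale fundamental group is either trivial (if $C \cong \P^1$) or infinite (if $g(C) \geq 1$), delivering the desired contradiction.

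Write $K := k(C)$. By hypothesis $X'_K$ is geometrically integral with geometric normalization $\P^1_{K^{\alg}}$, so \cite[Proposition 17.15.14]{EGA4-4} provides a finite purely inseparable extension $K'/K$ such that the normalization of $X'_K \otimes_K K'$ is smooth of geometric genus $0$ over $K'$, and hence is isomorphic to $\P^1_{K'}$ by Tsen's theorem applied to the function field $K'$ of a curve over the algebraically closed field $k$. I then pick a smooth proper curve $C'$ with $k(C') = K'$, so that $C' \to C$ is a finite, purely inseparable, radicial morphism of smooth proper curves; in particular $\pi_1^{\rm{\acute{e}t}}(C') \cong \pi_1^{\rm{\acute{e}t}}(C)$ by the invariance of $\pi_1^{\rm{\acute{e}t}}$ under radicial morphisms (cf.\ \cite[Th\'eor\`eme IX.4.10]{SGA 1}).

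Next I compare $\pi_1^{\rm{\acute{e}t}}(X')$ with $\pi_1^{\rm{\acute{e}t}}(C')$ through a smooth ruled cover of $X'$. Let $W$ be the normalization of the reduction of $X' \times_C C'$: by construction $W$ is a normal proper connected surface, finite and purely inseparable over $X'$, whose generic fiber over $C'$ is $\P^1_{K'}$. Take a resolution of singularities $\widetilde{W} \to W$; then $\widetilde{W}$ is a smooth proper connected surface whose fibration $\widetilde{W} \to C'$ still has generic fiber $\P^1_{K'}$, so $\widetilde{W}$ is birationally equivalent to $\P^1 \times C'$, whence
\[
\pi_1^{\rm{\acute{e}t}}(\widetilde{W}) \,\cong\, \pi_1^{\rm{\acute{e}t}}(\P^1 \times C') \,\cong\, \pi_1^{\rm{\acute{e}t}}(C')
\]
by birational invariance and the simple connectedness of $\P^1$. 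The composition $\widetilde{W} \to W \to X'$ is a dominant morphism of smooth proper connected surfaces whose function field extension equals the purely inseparable extension $K'/K$, so Lemma \ref{PurelyInsepGroup} yields $\pi_1^{\rm{\acute{e}t}}(\widetilde{W}) \cong \pi_1^{\rm{\acute{e}t}}(X')$. Stringing the identifications together produces $\pi_1^{\rm{\acute{e}t}}(X') \cong \pi_1^{\rm{\acute{e}t}}(C)$, the desired contradiction. The subtlety I anticipate is ensuring that despite $W$ possibly being singular one can apply Lemma \ref{PurelyInsepGroup} directly to $\widetilde{W} \to X'$; this is legitimate because the lemma as stated only requires dominance of a map between smooth proper connected surfaces, and the purely inseparable function field extension is inherited from $K'/K$ through the resolution without any need to resolve indeterminacies.
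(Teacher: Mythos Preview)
Your argument is correct and follows the same construction as the paper: pass to a purely inseparable cover of the base so that the generic fiber becomes $\P^1$, resolve, and apply Lemma~\ref{PurelyInsepGroup}. The one substantive difference is the endgame. The paper, having identified $\pi_1^{\rm{\acute{e}t}}(\widetilde{Z})\cong\pi_1^{\rm{\acute{e}t}}(X)$, argues that finiteness forces $b_1(\widetilde{Z})=0$, then uses the Albanese to conclude the base is $\P^1$, hence $\widetilde{Z}$ is rational and simply connected. You instead go straight to $\pi_1^{\rm{\acute{e}t}}(\widetilde{W})\cong\pi_1^{\rm{\acute{e}t}}(\P^1\times C')\cong\pi_1^{\rm{\acute{e}t}}(C')\cong\pi_1^{\rm{\acute{e}t}}(C)$ and invoke the dichotomy that $\pi_1^{\rm{\acute{e}t}}$ of a smooth proper curve is either trivial or infinite. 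Your route is a bit cleaner, as it avoids the Albanese detour; the paper's route has the minor advantage of pinning down that the base must in fact be $\P^1$. One small wording point: the function field extension $k(\widetilde{W})/k(X')$ is not literally $K'/K$ but rather $k(X')\cdot K'/k(X')$; it is nonetheless purely inseparable, which is all Lemma~\ref{PurelyInsepGroup} needs.
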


\begin{proof}
Assume that there exists
a smooth, proper, and connected surface $Y$ that is birationally equivalent to $X$
and such that $Y$ admits a fibration $Y\to B$,
where $B$ is a smooth and proper connected curve, 
and whose geometric generic fiber is a singular rational curve.
As in the proof of (ii) $\Rightarrow$ (i) of Proposition \ref{Proposition:UniruledExistenceNonRigidRationalCurves}, 
there exists a purely inseparable covering $C\to B$
such that the normalization $Z$ of the induced fibration $Y\times_BC\to C$ is generically 
a $\P^1$-bundle. 
Let $\widetilde{Z}\to Z$ be a resolution of singularities.
The induced composition $f: \widetilde{Z}\to Y$ is dominant and 
induces an extension of function fields  $k(Y)\subset k(\widetilde{Z})$ 
that is finite and purely inseparable.
By Lemma \ref{PurelyInsepGroup}, we have
$\pi_1^{\rm{\acute{e}t}}(\widetilde{Z})\cong\pi_1^{\rm{\acute{e}t}}(Y)$.
By birational invariance of the fundamental group \cite[Corollaire X.3.4]{SGA 1},
we have $\pi_1^{\rm{\acute{e}t}}(Y)\cong\pi_1^{\rm{\acute{e}t}}(X)$. 
Therefore, we have
$\pi_1^{\rm{\acute{e}t}}(\widetilde{Z})\cong\pi_1^{\rm{\acute{e}t}}(X)$.
Hence, $\pi_1^{\rm{\acute{e}t}}(\widetilde{Z})$ is a finite and non-trivial group.
In particular, the first Betti number of $\widetilde{Z}$
satisfies $b_1(\widetilde{Z})=0$.
Passing to Albanese varieties and using their universal properties, we conclude 
$C\cong\P^1$.
Thus, $\widetilde{Z}$ is a rational surface.
This implies that $\pi_1^{\rm{\acute{e}t}}(\widetilde{Z})$ 
is trivial by \cite[Corollaire XI.1.2]{SGA 1}, a contradiction.
\end{proof}

\begin{rem}
 If $X\to Y$ is a finite \'etale morphism between smooth and projective varieties, 
 then $X$ is unirational if and only if $Y$ is unirational; see 
 \cite[Lemma 4]{ShiodaSupersingular}.
\end{rem}


\begin{ex}
 Let $S_n \subset \P^3$ be the Fermat surface of degree $n$ over an algebraically
closed field $k$ of characteristic $p\geq3$ with $p\nmid n$.
Let $m\geq4$ be a divisor of $n$ and let $\zeta=\zeta_m$ be a primitive $m$.th root of unity.
The $\mu_m$-action
$[X:Y:Z:W] \mapsto [X : \zeta Y : \zeta^2 Z : \zeta^3 W]$.
restricts to a fixed point free action of $\mu_m$ on $S_n$.
The quotient $Y_{n,m}:=S_n/\mu_m$ is a smooth, projective, and 
connected surface over $k$
with \'etale fundamental group $\pi_1^{\rm{\acute{e}t}}(Y_{n,m})\cong\Z/m\Z$.
Thus, if there exists an integer $\nu$ such that $p^\nu\equiv-1\pmod n$, then
\begin{enumerate}
 \item $Y_{n,m}$ contains a topologically non-rigid rational curve $C$, but
 \item $C$ is not the fiber of a fibration of $Y_{n,m}$, or of any smooth, proper, and connected
  surface birationally equivalent to $Y_{n,m}$, whose geometric generic fiber is a rational 
  curve.
\end{enumerate}
 For example, if $m=n=5$ and $p\neq 5$, then $Y_{5,5}$ is the classical \textit{Godeaux surface}.
This surface is unirational if and only if $p\not\equiv1\pmod5$; see \cite[Lemma 3]{Shioda}.
\end{ex}

\subsection{Higher dimensional counterexamples}
\label{subsec:higher dimensional counter-examples}

Finally, we give some examples that show that
a naive generalization of Theorem \ref{MainTheorem1}
(namely, omitting the separability condition (iii) in Theorem \ref{MainTheorem2}) 
to higher dimensions fails, even if we assume that the topologically non-rigid rational curve $C$ is smooth.
This is related to the following non-reducedness phenomenon in characteristic $p>0$:
if the target of a fibration between smooth varieties has dimension $\geq 2$,
then the geometric generic fiber may be non-reduced.
Such \textit{wild fibrations} and \textit{wild conic bundles} have been
constructed and studied, for example in \cite{Kollar:ExtremalRays, MoriSaito, Sato:Uniruled, Schroeer:Nagoya}.
Our examples of varieties given below were inspired by 
Sato's threefolds from  \cite{Sato:Uniruled}.
A special case in characteristic $2$ was also studied by Koll\'ar in
\cite[Example 4.12]{Kollar:ExtremalRays} and
\cite[Chapter IV, Exercise 1.13.5]{Kollar:RationalCurveBook}.

\begin{prop}
\label{Proposition:CounterexampleHigherDimension}
Let $k$ be an algebraically closed field of characteristic $p>0$ and let
$n\geq3$ be an integer.
Then,  there exist a smooth, projective, and connected variety $X$ 
of dimension $n$ over $k$,
a smooth and connected variety $U$ over $k$ with $\dim(U) = \dim(X) - 1$,
and a closed subvariety $\mathscr{C} \subset U \times X$
(with projections $\pi \colon \mathscr{C} \to U$ and $\varphi \colon \mathscr{C} \to X$)
satisfying the following conditions:
\begin{enumerate}
\item $X$ is \textit{not} separably uniruled,
\item $\mathscr{C} \subset U \times X$ gives a family of rational curves on $X$
(see Definition \ref{Definition:FamilyRationalCurve} (iii)),
\item $\varphi \colon \mathscr{C} \to X$ is dominant, and
\item for every closed point $u \in U$, $\varphi(\mathscr{C}_u)$ is a smooth rational curve on $X$.
\end{enumerate}
\end{prop}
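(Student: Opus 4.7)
The plan is to handle the case $n = 3$ by constructing an explicit ``wild $\P^1$-bundle'' threefold in the style of Sato and Koll\'ar, and then to bootstrap to all $n \geq 3$ by taking products with curves of positive genus.

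For the bootstrap step, suppose we are given a smooth, projective, connected threefold $X_3$ over $k$ together with a closed subvariety $\mathscr{C}_3 \subset U_3 \times X_3$ (with $U_3$ smooth connected of dimension $2$) satisfying (1)--(4). Fix smooth projective curves $C_1, \ldots, C_{n-3}$ over $k$ of genus $\geq 1$, and put $X := X_3 \times \prod_i C_i$, $U := U_3 \times \prod_i C_i$, and
\[
  \mathscr{C} \,:=\, \{\,((u_3, c_1, \ldots, c_{n-3}),\, (x_3, c_1', \ldots, c_{n-3}')) : (u_3, x_3) \in \mathscr{C}_3,\ c_i = c_i' \text{ for all } i\,\}.
\]
Each fiber of $\pi \colon \mathscr{C} \to U$ is isomorphic to the corresponding fiber of $\pi_3$ and maps isomorphically in $X$ onto a smooth rational curve, while dominance of $\varphi$ is inherited from $\varphi_3$. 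Iterated application of Lemma \ref{Lemma:SeparablyUniruledFiberProduct} (using that each $C_i$ has genus $\geq 1$) gives that $X$ is not separably uniruled as soon as $X_3$ is not.

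For the base case $n = 3$, choose a smooth projective surface $S$ of general type, a sufficiently ample line bundle $\mathscr{L}$ on $S$, and three general sections $f_0, f_1, f_2 \in H^0(S, \mathscr{L})$ with no common zero, and define
\[
  X_3 \,:=\, \{\,(x, s) \in \P^2 \times S \,:\, F(x, s) := f_0(s) x_0^p + f_1(s) x_1^p + f_2(s) x_2^p = 0\,\}.
\]
A generic choice of the $f_i$ makes $X_3$ smooth, projective, and connected of dimension $3$: although $\partial F/\partial x_i \equiv 0$ in characteristic $p$, the partial derivatives of $F$ with respect to $S$ are generically non-zero on $X_3$. Over the perfect closure of $k(S)$ the freshman's dream gives $F = (f_0^{1/p} x_0 + f_1^{1/p} x_1 + f_2^{1/p} x_2)^p$, so the geometric generic fiber of $X_3 \to S$ is a $p$-fold line in $\P^2$. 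To produce the family, set $U_3 := S$ with structure map $t \colon U_3 \to S$ the absolute Frobenius; its pullback satisfies $t^{\ast} F = (f_0 x_0 + f_1 x_1 + f_2 x_2)^p$, and the reduced subvariety
\[
  \mathscr{C}_3 \,:=\, \{\,(x, u) \in \P^2 \times U_3 \,:\, f_0(u) x_0 + f_1(u) x_1 + f_2(u) x_2 = 0\,\}
\]
is a $\P^1$-bundle over $U_3$. The assignment $(x, u) \mapsto (u, (x, t(u)))$ embeds $\mathscr{C}_3$ as a closed subvariety of $U_3 \times X_3$, and the induced map $\varphi_3 \colon \mathscr{C}_3 \to X_3$ is a universal homeomorphism (it is the restriction of the Frobenius base change), hence dominant; each fiber of $\pi_3$ is sent isomorphically onto a smooth line in a fiber of $X_3 \to S$, verifying (2)--(4).

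The main obstacle is proving that $X_3$ is not separably uniruled. By adjunction on $\P^2 \times S$,
\[
  K_{X_3} \,\cong\, \bigl(\mathscr{O}_{\P^2}(p-3) \boxtimes (K_S + \mathscr{L})\bigr)\big|_{X_3}.
\]
A direct intersection-number computation shows that $K_{X_3}$ is big whenever $p \geq 5$, $S$ is of general type, and $\mathscr{L}$ is sufficiently ample; in that range $X_3$ is of general type and hence not separably uniruled by Proposition \ref{Proposition:SeparablyUniruledPlurigenera}. For the small primes $p \in \{2, 3\}$ the coefficient $p - 3$ becomes non-positive and the direct adjunction estimate is insufficient; here one must follow Sato's original construction from \cite{Sato:Uniruled} more carefully (or Koll\'ar's variant from \cite[Example 4.12]{Kollar:ExtremalRays} for $p = 2$), where the wild curve bundle is arranged so that $K_{X_3}$ remains pseudo-effective and $\kappa(X_3) \geq 0$ can be deduced. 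This small-characteristic analysis is the most delicate step of the proof.
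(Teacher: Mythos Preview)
Your approach is essentially the paper's: invoke Sato's wild $\P^1$-bundle threefold for $n=3$ and then bootstrap to $n\geq 4$ by taking products with curves of genus $\geq 1$ via Lemma~\ref{Lemma:SeparablyUniruledFiberProduct}. The paper simply cites \cite{Sato:Uniruled} for the entire three-dimensional case, whereas you spell out the hypersurface $\{f_0x_0^p+f_1x_1^p+f_2x_2^p=0\}\subset\P^2\times S$ explicitly; this is exactly Sato's construction.

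Two small points. First, your adjunction argument already handles $p=3$: for $p\geq 3$ the line bundle $\mathcal{O}_{\P^2}(p-3)\boxtimes(K_S+\mathscr{L})$ has nonzero global sections (take $S$ with $K_S$ effective and $\mathscr{L}$ very ample), and these restrict nontrivially to $X_3$, so $\kappa(X_3)\geq 0$. By Proposition~\ref{Proposition:SeparablyUniruledPlurigenera} that is all you need; bigness is overkill. Only $p=2$ genuinely falls outside your explicit computation, and there both you and the paper defer to the literature. Second, your Frobenius bookkeeping in the construction of $\mathscr{C}_3$ is slightly off: pulling back along the absolute Frobenius $t\colon S\to S$ gives $t^\ast f_i=f_i^p$, not $f_i^{1/p}$, so $t^\ast F$ is not $(f_0x_0+f_1x_1+f_2x_2)^p$ as written. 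What you want is the map in the other direction: parametrize by $u=[a_0:a_1:a_2]\in(\P^2)^\vee$ (or rather its preimage in $S$ under $s\mapsto[f_0(s):f_1(s):f_2(s)]$ composed with the $p$-th power map on $(\P^2)^\vee$), so that the line $\{a_0x_0+a_1x_1+a_2x_2=0\}$ sits inside the fiber over the point $s$ with $[f_0(s):f_1(s):f_2(s)]=[a_0^p:a_1^p:a_2^p]$. This is cosmetic and does not affect the strategy.
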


\begin{proof}
First, we consider the case $n=3$.
In this case, the existence such a smooth and projective threefold $X$
follows from work of Sato \cite{Sato:Uniruled}:
let $X$ be a three-dimensional example as stated
in \cite[p.~448, Theorem]{Sato:Uniruled} (the construction is explained in \cite[p.~458, at the beginning of Section 5]{Sato:Uniruled}).
In our situation, we can discard the condition $p<(n+3)/2$ from \cite[p.~448, Theorem]{Sato:Uniruled},
since this is necessary for $X$ to also satisfy the property $\mathrm{(NC)}$
(introduced in \cite[p.~447]{Sato:Uniruled} and proved in \cite[p.~460, Step 2]{Sato:Uniruled}), which we do not require.
A family of rational curves $\mathscr{C}\subset U\times X$, such that $\varphi(\mathscr{C}_u)$ 
is a smooth rational curve on $X$ for every closed point $u\in U$ is provided by the
lines $L$ from \cite[p.~460, Step 2]{Sato:Uniruled}.

If $n\geq4$, then we take a threefold $X$ as above.
We choose a smooth, projective, and connected curve $C$ of genus $g\geq1$
over $k$.
By Lemma \ref{Lemma:SeparablyUniruledFiberProduct}, the product
\[ X \times \underbrace{C \times \cdots \times C}_{(n-3)\,\rm{factors}} \]
yields an $n$-dimensional example that
satisfies the properties of Proposition \ref{Proposition:CounterexampleHigherDimension}.
\end{proof}

\begin{rem}
From the construction given in \cite{Sato:Uniruled}, one easily sees that
$k(\mathscr{C}) \cap k(X)^{\sep}$ is \textit{not} a separable extension of
$k(U) \cap k(X)^{\sep}$, i.e., condition (iii) of Theorem \ref{MainTheorem2} is not satisfied.
\end{rem}

\subsection*{Acknowledgements}

The authors thank Frank Gounelas and Ichiro Shimada
for comments and discussion.
The authors thank Hiromu Tanaka for
providing invaluable information on the geometry of surfaces
over imperfect fields and references.
Moreover, the authors thank Gert-Martin Greuel for many comments and suggestions, 
including a whole report on an earlier version of this article.
Finally, the authors thank the referee for remarks and comments, which improve the article.

The first named author is supported by Research Fellowships of Japan Society for the Promotion of Science for Young Scientists KAKENHI Grant Number 18J22191.
The second named author is
supported by the JSPS KAKENHI Grant Number 20674001 and 26800013.
The third named author is supported by the ERC Consolidator Grant
681838 K3CRYSTAL.

\end{document}